\documentclass[francais]{article}
\usepackage[T1]{fontenc}
\usepackage[latin1]{inputenc}
\setcounter{secnumdepth}{3}
\setcounter{tocdepth}{3}
\usepackage{graphicx}
\usepackage{amssymb}
\usepackage{amsmath}
\usepackage{amsthm}
\usepackage{color}
\usepackage[all]{xy}
\usepackage{amsfonts}
\usepackage{mathrsfs}
\usepackage{latexsym}
\usepackage{geometry} 
\usepackage{textcomp}
\usepackage[english,frenchb]{babel}

\theoremstyle{plain}
\newtheorem*{bigtheo}{Théorème}
\newtheorem{theo}{Théorème}[section]
\newtheorem{prop}[theo]{Proposition}
\newtheorem{lemm}[theo]{Lemme}
\newtheorem{coro}[theo]{Corollaire}
\theoremstyle{definition}
\newtheorem{defi}[theo]{Définition}
\theoremstyle{remark}
\newtheorem{rema}[theo]{Remarque}

\title{Formes modulaires surconvergentes, ramification et classicité}
\author{Stéphane Bijakowski}


\begin{document}
\maketitle

\begin{abstract}
Nous prouvons un résultat de classicité pour les formes modulaires surconvergentes sur les variétés de Shimura PEL de type (A) ou (C), sans hypothèse de ramification. Nous utilisons une méthode de prolongement analytique, qui généralise des résultats antérieurs dans le cas non ramifié. Nous travaillons avec le modèle rationnel de la variété de Shimura, et utilisons un plongement dans la variété de Siegel pour définir les structures entières sur l'espace rigide.
\end{abstract}

\selectlanguage{english}
\begin{abstract}
We prove in this paper a classicality result for overconvergent modular forms on PEL Shimura varieties of type (A) or (C), without any ramification hypothesis. We use an analytic continuation method, which generalizes previous results in the unramified setting. We work with the rational model of the Shimura variety, and use an embedding into the Siegel variety to define the integral structures on the rigid space.
\end{abstract}

\selectlanguage{frenchb}
\tableofcontents

\section*{Introduction}
\addcontentsline{toc}{section}{Introduction}

Les formes modulaires $p$-adiques et surconvergentes ont été introduites pour étudier des congruences entre formes modulaires. Pour prouver des propriétés sur ces nouveaux objets, il apparaît important de montrer qu'ils sont proches des formes classiques. Le résultat de Coleman (\cite{Co}) montre qu'une forme surconvergente propre pour l'opérateur de Hecke $U_p$ est classique si le poids est suffisamment grand devant la valuation de la valeur propre. Plus précisément, on a le résultat suivant.

\begin{bigtheo}
Soit $f$ une forme surconvergente de poids $k \in \mathbb{Z}$, propre pour $U_p$ de valeur propre $a_p$. Si $k > 1 + v(a_p)$, alors $f$ est classique.
\end{bigtheo}

La preuve originale de Coleman repose sur une étude de la cohomologie de la courbe modulaire rigide. Buzzard (\cite{Bu}) et Kassaei (\cite{Ka}) ont donné une nouvelle preuve de ce théorème en utilisant une méthode de prolongement analytique. Plus précisément, une forme modulaire surconvergente peut être vue comme une section d'un faisceau sur un voisinage strict du lieu ordinaire-multiplicatif (c'est-à-dire le lieu où le sous-groupe universel de la $p$-torsion de la courbe elliptique est multiplicatif). En utilisant la dynamique de l'opérateur de Hecke $U_p$, on peut étendre facilement la forme au lieu supersingulier. Sur le lieu ordinaire-étale, les auteurs arrivent à construire des séries approchant la forme désirée, et arrivent à les recoller avec la forme initiale sous la condition du théorème. Cela prouve que la forme surconvergente peut être étendue à toute la variété rigide, et donc est classique. \\

De nombreux travaux ont ensuite été faits pour des variétés de Shimura plus générales. Citons notamment les résultats de Sasaki (\cite{Sa}), Pilloni-Stroh (\cite{P-S1}), Tian-Xiao (\cite{T-X}), Johansson (\cite{Jo}) dans le cas Hilbert, ainsi que Pilloni-Stroh (\cite{P-S2}) pour les variétés de Shimura déployées. \\
L'auteur, dans un article publié avec Pilloni et Stroh (\cite{Bi}), a notamment généralisé le résultat de classicité pour les variétés de Shimura avec bonne réduction, c'est-à-dire en supposant que le nombre premier $p$ est non ramifié dans la donnée de Shimura. Ce résultat a été obtenu en généralisant la méthode du prolongement analytique. \\
Le cas où le nombre premier $p$ est ramifié pose des problèmes techniques. Il est possible, en adaptant cette méthode, de prolonger la forme surconvergente au lieu de bonne réduction. Pour conclure à la classicité, il faudrait disposer de modèles entiers des compactifications toroïdales, et démontrer un principe de Koecher. C'est ce qui a été fait par l'auteur dans le cas Hilbert (\cite{Bi_Hilbert}). Notons également que Johansson (\cite{Jo}) obtient des résultats dans ce cas. \\

Pour des variétés de Shimura plus générales, il semble très technique de construire des modèles entiers des compactifications. Les modèles rationnels des compactifications ont été construits par Pink (\cite{Pin}). Il est peut-être possible de les définir par normalisation dans un autre espace. En effet, si $X$ désigne la variété de Shimura entière, il existe un morphisme $X \to Y$, où $Y$ est une variété de Siegel, pour laquelle il est possible de construire des modèles entiers des compactifications. On peut alors définir une compactification de $X$ comme la normalisation de cet espace dans une compactification de $Y$. La difficulté technique est alors de prouver que cet espace vérifie les propriétés attendues, notamment le principe de Koecher. \\
\indent Pour éviter ces difficultés, nous avons décidé de travailler avec le modèle rationnel de la variété de Shimura, et l'espace analytique associé. Rappelons que si $K$ est une extension finie de $\mathbb{Q}_p$, et $X$ un $K$-schéma de présentation finie, alors on peut associer à $X$ un espace rigide $X^{an}$, l'analytifié de $X$, dont les $\overline{K}$-points sont les mêmes que ceux de $X$. Nous travaillons donc avec l'analytifié de la variété de Shimura. Les principales difficultés concernent les structures entières, qui étaient présentes naturellement dans le cas de bonne réduction. En particulier, il est nécessaire de définir le degré d'un sous-groupe de la variété abélienne, ainsi qu'une norme sur l'espace des formes modulaires. Si $x$ est un point de cet espace analytique, il correspond à une variété abélienne $A$ définie sur une extension finie $L$ de $\mathbb{Q}_p$, avec des structures additionnelles. D'après un théorème de réduction semi-stable de Grothendieck, on sait que quitte à étendre $L$, il existe un schéma semi-abélien $A_0$ sur $O_L$ égal à $A$ en fibre générique. En utilisant ce schéma semi-abélien, on peut donc définir les degrés pour les sous-groupes de $A$, ainsi qu'un modèle entier pour l'espace vectoriel $\omega_A$. \\
\indent Cette définition point par point des structures entières peut être globalisée de la manière suivante. Soit $X$ la variété de Shimura sur $K$ considérée et $X^{an}$ son analytifié ; alors il existe un morphisme de $X$ vers une variété de Siegel $A_g$. Soit $\overline{A_g}$ une compactification entière de $A_g$, et $\overline{A_g}^{rig}$ l'espace rigide associé. Alors on a un morphisme $X^{an} \to \overline{A_g}^{rig}$. Les structures entières définies sur $\overline{A_g}^{rig}$ peuvent donc se transporter naturellement sur $X^{an}$. \\
\indent Puisque nous n'utilisons pas les modèles entiers des variétés de Shimura, nous devons modifier notre définition des formes modulaires surconvergentes. Dans les papiers précédents, nous utilisions une forme faible des formes surconvergentes : il s'agissait de sections définies sur un voisinage strict du lieu ordinaire-multiplicatif dans l'espace rigide $X_{rig}$ associé au modèle entier de la variété de Shimura. Ici, puisque nous travaillons avec l'espace analytifié $X^{an}$, nous devons changer cette définition. Une définition forte des formes surconvergentes est alors une section définie sur un voisinage strict du lieu ordinaire-multiplicatif dans l'espace rigide $\overline{X}^{an}$, où $\overline{X}$ est une compactification rationnelle de $X$ et $\overline{X}^{an}$ son analytifié. Au final, nous obtenons le théorème de classicité suivant.

\begin{bigtheo}[Théorèmes $\ref{theogen}$ et $\ref{theogen_A}$]
Soit $p$ un nombre premier, et $X$ une variété de Shimura PEL de type $(A)$ ou $(C)$ de niveau Iwahorique en $p$. On suppose que sur $\mathbb{Q}_p$ l'algèbre de la donnée de Shimura est isomorphe à un produit d'algèbres de matrices. Soit $f$ une forme modulaire surconvergente (au sens fort) sur $X$ de poids $\kappa$. On suppose que $f$ est propre pour une famille $(U_i)$ d'opérateurs de Hecke en $p$, de valeurs propres $(a_i)$. Si le poids $\kappa$ est suffisamment grand devant la famille des $(v(a_i))$, alors $f$ est classique.
\end{bigtheo}

Dans le cas $(A)$, nous devons supposer l'existence du lieu ordinaire pour que le problème ait un sens. \\
Nous avons également un résultat de classicité pour les variétés de Shimura avec un niveau arbitraire en $p$. Remarquons que dans ce cas, la variété de Shimura ne possède pas de modèle entier, donc la situation est a priori plus compliquée que la précédente. Cependant, puisque nous travaillions avec l'espace $X^{an}$, notre résultat se généralise dans ce cas.

\begin{bigtheo}[Théorème $\ref{theogen_gen}$]
Soit $p$ un nombre premier, et $X$ une variété de Shimura PEL de type $(A)$ ou $(C)$ de niveau $\Gamma_1(p^n)$ en $p$. On suppose que sur $\mathbb{Q}_p$ l'algèbre de la donnée de Shimura est isomorphe à un produit d'algèbres de matrices. Soit $f$ une forme modulaire surconvergente (au sens fort) sur $X$ de poids $\kappa$. On suppose que $f$ est propre pour une famille $(U_i)$ d'opérateurs de Hecke en $p$, de valeurs propres $(a_i)$. Si le poids $\kappa$ est suffisamment grand devant la famille des $(v(a_i))$, alors $f$ est classique.
\end{bigtheo}

Dans les deux théorèmes précédents, les relations entre le poids et les pentes sont les mêmes, et sont analogues à celles des théorèmes dans le cas de bonne réduction (voir les théorèmes $\ref{theogen}$ et $\ref{theogen_A}$ pour plus de détails). \\
$ $\\
Parlons maintenant de l'organisation du texte. Nous traitons tout d'abord le cas des variétés de type $(C)$. Après avoir introduit la variété de Shimura et l'espace des formes modulaires, nous définissons les structures entières sur l'espace analytique sur lequel nous travaillons. Nous montrons ensuite comment les résultats précédents de l'auteur sur le prolongement analytique permettent de prouver un théorème de classicité. Nous traitons le cas des variétés de type $(A)$ dans la partie $\ref{partie_A}$, et le cas d'un niveau arbitraire en $p$ dans la partie $\ref{partie_gen}$. Enfin, dans l'appendice, nous rappelons certaines propriétés utiles sur les schémas semi-abéliens, et définissons les degrés partiels d'un schéma en groupe fini et plat avec une action d'un certain anneau. \\

L'auteur souhaite remercier son directeur de thèse Benoît Stroh pour ses conseils, remarques et encouragements. Il souhaite également remercier Pascal Boyer, Vincent Pilloni et Jacques Tilouine pour des discussions intéressantes. Il remercie enfin l'ANR Arshifo pour son soutien financier.

\section{Espace de modules et formes modulaires} \label{symplectique}

Nous étudions dans ce paragraphe le cas des variétés de Shimura de type (C), en autorisant le nombre premier $p$ à être ramifié dans la donnée de Shimura. La principale difficulté dans ce cas provient de l'absence de modèles entiers pour les compactifications de la variété de Shimura. En effet, l'espace de modules définit un schéma sur l'anneau des entiers d'une extension finie de $\mathbb{Q}_p$, et il est possible de construire des compactifications de cet espace après inversion de $p$. Construire des modèles entiers pour ces compactifications qui vérifierons des bonne propriétés est un exercice difficile (voir \cite{Ra} dans le cas Hilbert). Il est peut-être possible de définir les compactifications en prenant la normalisation d'un certain espace dans un autre, mais vérifier que cette compactification vérifie les bonnes propriétés serait au minimum long et pénible. \\
La principale difficulté posée par l'absence de modèle entier des compactifications est l'absence du principe de Koecher. Ainsi, si $X^{rig}$ est l'espace rigide associé à la variété de Shimura entière, une section du faisceau des formes modulaires sur $X^{rig}$ ne provient plus nécessairement d'une forme modulaire classique. Pour remédier à ce problème, nous allons utiliser le modèle rationnel de la variété de Shimura, et travailler avec l'espace analytique associé. Nous adaptons ensuite les résultats obtenus dans les parties précédentes à cet espace analytique.

\subsection{Données de Shimura}

Rappelons les données paramétrant les variétés de Shimura PEL de type (C) (voir \cite{Ko}). Soit $B$ une $\mathbb{Q}$-algèbre simple munie d'une involution positive $\star$. Soit $F$ le centre de $B$ et $F_0$ le sous-corps de $F$ fixé par $\star$. Le corps $F_0$ est une extension totalement réelle de $\mathbb{Q}$, soit $d$ son degré. Faisons les hypothèses suivantes :

\begin{itemize}
\item $F=F_0$.
\item Pour tout plongement $F \to \mathbb{R}$, $B \otimes_{F} \mathbb{R} \simeq $M$_n(\mathbb{R})$, et l'involution $\star$ est donnée par $A \to A^t$.
\end{itemize}

Soit également $(U_{\mathbb{Q}},\langle,\rangle)$ un $B$-module hermitien non dégénéré, l'accouplement étant alterné. Soit $G$ le groupe des automorphismes du $B$-module hermitien $U_{\mathbb{Q}}$ ; pour toute $\mathbb{Q}$-algèbre $R$, on a donc
$$G(R) = \left\{ (g,c) \in GL_{B} (U_{\mathbb{Q}} \otimes_{\mathbb{Q}} R) \times R^* , \langle gx,gy \rangle =c \langle x,y\rangle \text{ pour tout } x,y \in U_{\mathbb{Q}} \otimes_{\mathbb{Q}} R \right\} $$

Soient $\tau_1, \dots, \tau_d$ les plongements de $F$ dans $\mathbb{R}$,et $B_i=B \otimes_{F,\tau_i} \mathbb{R} \simeq $M$_n (\mathbb{R})$. Alors $G_\mathbb{R}$ est isomorphe à
$$\text{G} \left( \prod_{i=1}^d \text{Sp}_{2 g} \right)$$
où $g = \frac{1}{2nd} $dim$_\mathbb{Q} U_\mathbb{Q}$. \\
Donnons-nous également un morphisme de $\mathbb{R}$-algèbres $h : \mathbb{C} \to $End$_B U_\mathbb{R}$ tel que $\langle h(z)v,w\rangle ~=~\langle v,h(\overline{z})w\rangle$ et $(v,w) \to \langle v,h(i)w\rangle$ est définie positive. Ce morphisme définit donc une structure complexe sur $U_\mathbb{R}$ : soit $U^{1,0}_{\mathbb{R}}$ le sous-espace de $U_\mathbb{R}$ pour lequel $h(z)$ agit par la multiplication par $z$. \\
On a alors $U^{1,0}_{\mathbb{R}} \simeq \prod_{i=1}^d (\mathbb{R}^n)^g$ en tant que $B \otimes_{\mathbb{Q}} \mathbb{R} \simeq \oplus_{i=1}^d $M$_n( \mathbb{R})$-module. \\
Soient également un ordre $O_B$ de $B$ stable par $\star$, et un réseau $U$ de $U_\mathbb{Q}$ tel que l'accouplement $\langle,\rangle$ restreint à $U\times U$ soit à valeurs dans $\mathbb{Z}$. Nous ferons les hypothèses suivantes : 
\begin{itemize}
\item $B \otimes_{\mathbb{Q}} \mathbb{Q}_p$ est isomorphe à un produit d'algèbres de matrices à coefficients dans une extension finie de $\mathbb{Q}_p$.
\item $O_B$ est un ordre maximal en $p$.
\item L'accouplement $U \times U \to \mathbb{Z}$ est parfait en $p$.
\end{itemize}
$ $\\
L'algèbre $B$ est un $\mathbb{Q}$-espace vectoriel. Soit $\alpha_1, \dots, \alpha_t$ une base de cet espace vectoriel, et 
$$\text{det}_{U^{1,0}} = f(X_1, \dots, X_t) = \det (X_1 \alpha_1 + \dots + X_t \alpha_t ;U^{1,0}_{\mathbb{C}} \otimes_{\mathbb{C}} \mathbb{C} [X_1, \dots, X_t])$$
On montre (\cite{Ko}) que $f$ est un polynôme à coefficients algébriques. Le corps de nombres $E$ engendré par ses coefficients est appelé corps réflexe, et est égal à $\mathbb{Q}$ dans le cas $(C)$. \\
Soit $h$ le nombre d'idéaux premiers de $F$ au-dessus de $p$, que l'on notera $\pi_1, \dots, \pi_h$. Soient également $f_i$ le degré résiduel et $e_i$ l'indice de ramification de $\pi_i$. On a donc $(p) = \prod_{i=1}^h \pi_i^{e_i}$.  Alors $B \otimes_{\mathbb{Q}}~\mathbb{Q}_p \simeq~\prod_{i=1}^{h} $M$_n(F_i)$, où $F_i$ est la complétion de $F$ en $\pi_i$. Le corps $F_i$ est donc une extension finie de $\mathbb{Q}_p$, de degré $d_i :=e_i f_i$, d'indice de ramification $e_i$ et de degré résiduel $f_i$.

\subsection{Variété de Shimura} \label{defshi}

Définissons maintenant la variété de Shimura PEL de type (C) associée à $G$. Soit $K$ une extension finie de $\mathbb{Q}_p$ contenant les images de tous les plongements possibles $F \hookrightarrow \overline{\mathbb{Q}}_p$. Soit $N \geq 3$ un entier premier à $p$. 
\begin{defi}
Soit $X$ sur Spec$(K)$ l'espace de modules dont les $S$-points sont les classes d'isomorphismes des $(A,\lambda,\iota,\eta)$ où
\begin{itemize}
\item $A \to S$ est un schéma abélien
\item $\lambda : A \to A^t$ est une polarisation de degré premier à $p$.
\item $\iota : O_B \to $End $A$ est compatible avec les involutions $\star$ et de Rosati, et les polynômes $\det_{U^{1,0}}$ et $\det_{Lie (A)}$ sont égaux.
\item $\eta : A[N] \to U/NU$ est une similitude symplectique $O_B$-linéaire, qui se relève localement pour la topologie étale en une similitude symplectique $O_B$-linéaire
$$H_1 (A,\mathbb{A}_f^p) \to U \otimes_{\mathbb{Z}} \mathbb{A}_f^p$$
\end{itemize}
\end{defi}

\begin{prop}
L'espace $X$ est un schéma quasi-projectif sur $K$.
\end{prop}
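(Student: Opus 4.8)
Le plan est de r\'ealiser $X$ comme un sch\'ema quasi-fini au-dessus d'une vari\'et\'e de Siegel, dont la quasi-projectivit\'e est connue, puis d'en d\'eduire celle de $X$. Tout d'abord, on consid\`ere l'espace de modules de Siegel $\mathcal{A}$ classifiant les triplets $(A, \lambda, \eta_0)$, o\`u $A$ est un sch\'ema ab\'elien de dimension relative $ndg$ (rappelons que $\dim_{\mathbb{Q}} U_{\mathbb{Q}} = 2ndg$), $\lambda$ une polarisation de degr\'e premier \`a $p$ fix\'e, et $\eta_0$ une structure de niveau $N$ symplectique sur le $\mathbb{Z}/N$-module sous-jacent. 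Puisque $N \geq 3$, il n'y a pas d'automorphismes non triviaux, et ce foncteur est repr\'esentable par un $K$-sch\'ema quasi-projectif, par la th\'eorie g\'eom\'etrique des invariants de Mumford.

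On dispose alors d'un morphisme d'oubli $X \to \mathcal{A}$, obtenu en oubliant l'action $\iota$ de $O_B$ et en ne retenant de $\eta$ que la structure symplectique sous-jacente. Il reste \`a v\'erifier que les donn\'ees PEL suppl\'ementaires d\'efinissent au-dessus de $\mathcal{A}$ un lieu localement ferm\'e (ou un sch\'ema fini et non ramifi\'e au-dessus de $\mathcal{A}$), de sorte que $X$ h\'erite de la quasi-projectivit\'e. Le point technique central est l'analyse du foncteur des endomorphismes : au-dessus de $\mathcal{A}$, muni du sch\'ema ab\'elien universel, le foncteur $\underline{\mathrm{End}}$ est repr\'esentable par un sch\'ema non ramifi\'e et localement de type fini, par rigidit\'e des sch\'emas ab\'eliens. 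Comme $O_B$ est engendr\'e par un nombre fini d'\'el\'ements v\'erifiant des relations polynomiales, la donn\'ee d'un morphisme d'anneaux $\iota : O_B \to \mathrm{End}(A)$ compatible avec l'involution de Rosati est repr\'esent\'ee par un sous-sch\'ema ferm\'e d'une puissance convenable de ce sch\'ema des endomorphismes. La condition sur le d\'eterminant, \`a savoir $\det_{U^{1,0}} = \det_{\mathrm{Lie}(A)}$, d\'efinit alors un sous-sch\'ema ferm\'e (et m\^eme ouvert et ferm\'e en caract\'eristique z\'ero, les coefficients du polyn\^ome caract\'eristique de l'action de $O_B$ sur $\mathrm{Lie}(A)$ \'etant localement constants). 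Enfin, la $O_B$-lin\'earit\'e de $\eta$ est une condition ferm\'ee.

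On en d\'eduit que le morphisme $X \to \mathcal{A}$ est quasi-fini, s\'epar\'e et de type fini. Par le th\'eor\`eme principal de Zariski, il se factorise en une immersion ouverte suivie d'un morphisme fini $\overline{X} \to \mathcal{A}$ ; comme $\overline{X}$ est fini sur un sch\'ema quasi-projectif, il est quasi-projectif, et $X$, ouvert dans $\overline{X}$, l'est aussi. La difficult\'e principale r\'eside dans le contr\^ole pr\'ecis de ce morphisme d'oubli, c'est-\`a-dire dans la repr\'esentabilit\'e du foncteur des endomorphismes et dans la v\'erification que la condition sur le d\'eterminant d\'ecoupe un lieu localement ferm\'e ; c'est essentiellement le contenu du travail de Kottwitz ($\cite{Ko}$), auquel on renvoie pour les d\'etails.
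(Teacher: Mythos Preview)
Your proof sketch is correct and follows the standard argument, essentially due to Kottwitz. Note however that the paper itself does not give a proof of this proposition: it is stated without demonstration, the representability and quasi-projectivity of PEL moduli problems of this type being a well-known consequence of \cite{Ko} (to which the paper refers for the general setup). Your outline --- forgetful map to a Siegel space, representability of the endomorphism functor by an unramified scheme via rigidity, the determinant and Rosati-compatibility conditions cutting out closed loci, and Zariski's main theorem to conclude --- is precisely the skeleton of Kottwitz's argument, so there is nothing to compare: you have supplied the proof the paper omits.
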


De plus, il est possible de construire des compactifications toroïdales de l'espace $X$. Celles-ci sont construites par exemple dans \cite{Pin}. On rappelle que la construction de ces compactifications nécessite un choix combinatoire, que l'on supposera fait dans la suite. Rappelons ici le principal théorème de Pink quant aux compactifications toroïdales des variétés de Shimura, et la fonctorialité de ces constructions. On renvoie à \cite{Pin} pour les définitions précises et les constructions.

\begin{theo} [\cite{Pin} Théorème $12.4$] \label{morphcompact}
$ $
\begin{itemize}
\item Soit $D$ une donnée de Shimura, et $X$ la variété de Shimura associée ; c'est un schéma défini sur le corps réflexe $E$. Alors, à tout choix combinatoire suffisamment fin $\Sigma$ on peut associer une compactification toroïdale $\overline{X}^\Sigma$. C'est un schéma propre sur $E$, lisse si le choix combinatoire l'est également.
\item Si $\Sigma_1$ est un choix combinatoire plus fin que $\Sigma_2$, alors l'identité de $X$ s'étend de manière unique en une immersion ouverte $\overline{X}^{\Sigma_1} \hookrightarrow \overline{X}^{\Sigma_2}$.
\item Soient $D_1$ et $D_2$ deux données de Shimura, avec un morphisme $D_1 \to D_2$. Alors, on a une inclusion des corps réflexes $E_2 \subset E_1$, et un morphisme de variétés de Shimura\\
 $X_1 \to X_2 \times_{E_2} E_1$. Soit $\Sigma_i$ un choix combinatoire pour $X_i$. Si $\Sigma_1$ est suffisamment fin, alors le morphisme précédent s'étend en un morphisme $\overline{X_1}^{\Sigma_1} \to \overline{X_2}^{\Sigma_2} \times_{E_2} E_1$.
\end{itemize}
\end{theo}

Soit donc $\overline{X}$ une compactification toroïdale de $X$, associé à un choix combinatoire lisse. C'est un schéma propre et lisse sur $K$. On supposera ce choix fixé dans la suite, en ayant à l'esprit que l'on peut prendre ce choix combinatoire arbitrairement fin. Le schéma abélien universel $A \to X$ s'étend en un schéma semi-abélien $A \to \overline{X}$. Nous allons maintenant définir une structure de niveau Iwahorique sur $X$. Si $A \to X$ est le schéma abélien universel sur $X$, on a
$$A[p^\infty] = \oplus_{i=1}^{h} A[\pi_i^\infty]$$         
De plus, les groupes de Barsotti-Tate $A[\pi_i^\infty]$ sont principalement polarisés de dimension $n d_i g$. 

\begin{defi}
Soit $X_{Iw}$ l'espace de modules sur $K$ dont les $S$-points sont les $(A,\lambda,\iota,\eta,H_{i,j})$ où $(A,\lambda,\iota,\eta) \in X(S)$ et $0=H_{i,0} \subset H_{i,1} \subset \dots \subset H_{i,g}$ est un drapeau de sous-groupes finis et plats, stables par $O_B$, et totalement isotropes de $A[\pi_i]$, chaque $H_{i,j}$ étant de hauteur $nf_i j$, pour tout $1\leq i \leq h$.
\end{defi}

L'espace $X_{Iw}$ est un schéma quasi-projectif sur $K$, et on dispose du morphisme d'oubli $X_{Iw} \to X$. Soit également $\overline{X}_{Iw}$ une compactification toroïdale lisse de $X_{Iw}$ sur $K$. On suppose que les choix combinatoires sont faits de telle manière à ce que le morphisme $X_{Iw} \to X$ s'étend en $\overline{X}_{Iw} \to \overline{X}$ (cela est possible d'après le théorème $\ref{morphcompact}$). \\
Enfin, nous noterons $X^{an}$, $X_{Iw}^{an}$, $\overline{X}^{an}$ et $\overline{X}_{Iw}^{an}$ les espaces analytiques associés respectivement aux schémas $X$ et $X_{Iw}$, $\overline{X}$ et $\overline{X}_{Iw}$ (voir \cite{Be} par exemple).

\subsection{Formes modulaires}

Pour tout idéal premier $\pi_i$ divisant $p$, on rappelle que $F_{i}$ est la complétion de $F$ au-dessus de $\pi_i$. \\
Soit $A$ le schéma semi-abélien universel sur $\overline{X}$, et soit $\omega_A = e^* \Omega_{A/\overline{X}}^1$ le faisceau conormal relatif à la section unité de $A$ ; il est localement pour la topologie de Zariski isomorphe à $St \otimes_{\mathbb{Q}} \mathcal{O}_{\overline{X}}$ comme $B \otimes_{\mathbb{Q}} \mathcal{O}_{\overline{X}}$-module, où
$$St = \oplus_{i=1}^{h} (F_{i}^n )^g $$
Soit $\mathcal{T} = $Isom$_{B \otimes \mathcal{O}_{\overline{X}}} (St \otimes \mathcal{O}_{\overline{X}}, \omega_A)$. C'est un torseur sur $\overline{X}$ sous le groupe
$$M=\left( \prod_{i=1}^{h} Res_{F_{i} / \mathbb{Q}_p} GL_g  \right ) \times_{\mathbb{Q}_p} K$$

Soit $T_M$ le tore diagonal de $M$, $B_M$ le Borel supérieur de $M$, et $U_M$ son radical unipotent. Soit $X(T_M)$ le groupe des caractères de $T_M$, et $X(T_M)^+$ le cône des poids dominants pour $B_M$. Si $\kappa \in X(T_M)^+$, on note $\kappa'=- w_0 \kappa \in X(T_M)^+$, où $w_0$ est l'élément le plus long du groupe de Weyl de $M$ relativement à $T_M$. \\
Soit $\phi : \mathcal{T} \to \overline{X}$ le morphisme de projection.

\begin{defi}
Soit $\kappa \in X(T_M)^+$. Le faisceau des formes modulaires de poids $\kappa$ est $\omega^\kappa =~\phi_* O_\mathcal{T}[\kappa']$, où $\phi_* O_\mathcal{T}[\kappa']$ est le sous-faisceau de $\phi_* O_\mathcal{T}$ où $B_M=T_M U_M$ agit par $\kappa'$ sur $T_M$ et trivialement sur $U_M$.
\end{defi} 

Le faisceau $\omega^\kappa$ est un faisceau localement libre de rang fini sur $\overline{X}$. Une forme modulaire de poids $\kappa$ sur $\overline{X}$ est donc une section globale de $\omega^\kappa$, soit un élément de $H^0({\overline{X}} , \omega^\kappa)$. En utilisant la projection $\overline{X}_{Iw} \to {\overline{X}}$, on définit de même le faisceau $\omega^\kappa$ sur $\overline{X}_{Iw}$, ainsi que les formes modulaires sur $\overline{X}_{Iw}$. On notera encore $\omega^\kappa$ le faisceau analytifié sur $\overline{X}^{an}$ et $\overline{X}_{Iw}^{an}$. \\

\begin{rema}
Par équivalence de Morita, la catégorie des $M_n(A)$-modules et celle des $A$-modules sont équivalentes, pour tout anneau $A$. L'équivalence de catégorie est explicite : à un $A$-module $M$, on associe $M^n$, qui est bien muni d'une action de $M_n(A)$ ; réciproquement, à un $M_n(A)$-module $N$, on associe le $A$-module $E_{1,1} N$, où $E_{1,1}$ est la matrice avec un seul coefficient non nul en position $(1,1)$ égal à $1$. \\
De cette manière, puisque $B \otimes_{\mathbb{Q}} \mathbb{Q}_p = \prod_{i=1}^h M_n(F_i)$, et que le faisceau $\omega_A$ est isomorphe à $St \otimes \mathcal{O}_{\overline{X}}$ comme $B \otimes_{\mathbb{Q}} \mathcal{O}_{\overline{X}}$-module, l'équivalence de Morita associe à $\omega_A$ le faisceau de $(\prod_{i=1}^h F_i) \otimes_{\mathbb{Q}} \mathcal{O}_{\overline{X}}$-modules défini par $E \cdot \omega_A$, où $E$ est la projection défini par $(E_{1,1})_i \in \prod_{i=1}^h M_n(F_i)$. Ce faisceau est isomorphe à $(\oplus_{i=1}^{h} F_i^g) \otimes_{\mathbb{Q}} \mathcal{O}_{\overline{X}}$ comme $(\prod_{i=1}^h F_i) \otimes_{\mathbb{Q}} \mathcal{O}_{\overline{X}}$-module.
\end{rema}

\subsection{Opérateurs de Hecke} \label{heckedef}

Soit $1 \leq i \leq h$. Soit $C_i$ l'espace de modules sur $K$ paramétrant les $(A,\lambda,\iota,\eta,H_{j,k},L)$ avec $(A,\lambda,\iota,\eta,H_{j,k}) \in X_{Iw}$ et $L$ un sous-groupe fini et plat de $A[\pi_i]$, stable par $O_B$, totalement isotrope et supplémentaire de $H_{i,g}$ dans $A[\pi_i]$. Nous avons deux morphismes finis étales $p_1, p_2 : C_i \to X_{Iw}$ : $p_1$ est l'oubli de $L$, et $p_2$ est le quotient par $L$. Une ambigu\"ité subsiste toutefois pour la polarisation sur $A/L$. Soit $\xi_i$ un élément totalement positif de l'anneau des entiers de $F$, avec $v_{\pi_i} (\xi_i) = 1$ et $v_{\pi_j} (\xi_i) = 0$ si $j \neq i$. On définit la polarisation sur $A/L$ comme la polarisation descendue $\xi_i \cdot \lambda$. Il s'agit bien d'une polarisation de degré premier à $p$. Le morphisme $p_2$ dépend donc du choix d'un tel élément $\xi_i$, mais ce choix n'est pas important en pratique (voir \cite{Bi} remarque $2.3.2$). Remarquons que si $p$ est inerte dans $F$, on peut choisir $\xi_i = p$. \\
Soit $C_i^{an}$ l'espace analytique associé à $C_i$ ; on note encore $p_1$, $p_2 : C_i^{an} \to X_{Iw}^{an}$ les morphismes induits. L'opérateur de Hecke agissant sur $X_{Iw}^{an}$ est défini par $U_{\pi_i} (S) := p_2(p_1^{-1}(S))$ pour toute partie $S$ de $X_{Iw}^{an}$. Cette correspondance envoie les ouverts sur les ouverts, et les ouverts quasi-compacts sur les ouverts quasi-compacts. \\ 
Notons $q : A \to A/L$ l'isogénie universelle au-dessus de $C_i$. Celle-ci induit un isomorphisme $q^*:~\omega_{A/L} \to \omega_{A}$, et donc un morphisme $q^* (\kappa) : p_2^* \omega^\kappa \to~p_1^* \omega^\kappa$. Pour tout ouvert $\mathcal{U}$ de $X_{Iw}^{an}$, nous pouvons donc former le morphisme composé
\begin{displaymath}
\widetilde{U}_{\pi_i} :   H^0(U_{\pi_i}(\mathcal{U}),\omega^\kappa) \to H^0 ( p_1^{-1} (\mathcal{U}), p_2^* \omega^\kappa) \overset{q^*(\kappa)}{\to} H^0(p_1^{-1}(\mathcal{U}) , p_1^* \omega^\kappa) \overset{Tr_{p_1}}{\to}  H^0(\mathcal{U},\omega^\kappa)
\end{displaymath}

\begin{defi}
L'opérateur de Hecke agissant sur les formes modulaires est alors défini par $U_{\pi_i} = \frac{1}{p^{n_i}} \widetilde{U}_{\pi_i}$ avec $n_i=\frac{f_i g(g+1)}{2}$. 
\end{defi}

\begin{rema}
Le terme de normalisation $\frac{1}{p^{n_i}}$ sert à maximaliser l'intégrabilité de l'opérateur de Hecke, comme le montre un calcul sur les $q$-développements.
\end{rema}

A priori, cet opérateur n'est défini que sur l'espace $H^0(X_{Iw}^{an},\omega^\kappa)$, et non sur $H^0(\overline{X}_{Iw}^{an},\omega^\kappa)$. Etudions les problèmes au bord. \\
Il existe une compactification toroïdale $\overline{C}_i$ de $C_i$. D'après le théorème $\ref{morphcompact}$, il existe un choix combinatoire pour $C_i$ tel que le morphisme $p_1 : C_i \to X_{Iw}$ s'étend en un morphisme $\overline{C}_i \to \overline{X}_{Iw}$. Par le même argument, il existe un autre choix combinatoire pour $C_i$ tel que le morphisme $p_2$ s'étend aux compactifications pour ce choix combinatoire. Or, étant donné deux choix combinatoires on peut toujours en trouver un troisième plus fin que les deux premiers. Le théorème $\ref{morphcompact}$ montre donc qu'il est possible de construire une compactification toroïdale $\overline{C}_i$ telle que les morphisme $p_1$ et $p_2$ s'étendent en des morphismes $\overline{C}_i \to \overline{X}_{Iw}$. Si on note $\overline{C}_i^{an}$ l'espace rigide analytique associé à $\overline{C}_i$, on obtient des morphismes $p_1, p_2 : \overline{C}_i^{an} \to \overline{X}_{Iw}^{an}$. La même formule que précédemment permet de définir un opérateur de Hecke géométrique $U_{\pi_i}$ agissant sur les parties de $\overline{X}_{Iw}^{an}$. Néanmoins, les morphismes $p_1$ et $p_2$ n'étant plus finis étales, cette correspondance ne respecte plus la topologie, c'est-à-dire que l'image d'un ouvert n'est pas nécessairement encore un ouvert. Pour la même raison, il n'est pas possible de définir l'opérateur $U_{\pi_i}$ agissant sur $H^0(\overline{X}_{Iw}^{an},\omega^\kappa)$ par la même formule que précédemment. Pour pallier à ce problème, nous utilisons le théorème suivant.

\begin{theo}[\cite{Lu}] \label{extension}
Soit $Y$ un espace rigide lisse quasi-compact, et $Z$ un fermé Zariski de $Y$ de codimension supérieure ou égale à $1$. Alors toute fonction bornée sur $Y \backslash Z$ s'étend de manière unique en une fonction sur $Y$.
\end{theo}

Soit $f \in H^0(\overline{X}_{Iw}^{an}, \omega^\kappa)$. Alors $U_{\pi_i} f$ définit un élément de $H^0(X_{Iw}^{an} , \omega^\kappa)$. Comme l'espace $\overline{X}_{Iw}^{an}$ est quasi-compact, la forme $f$ est automatiquement bornée (c'est-à-dire qu'il existe un recouvrement admissible de $\overline{X}_{Iw}^{an}$ par des ouverts affinoïdes sur lesquels on a une trivialisation du faisceau $\omega^\kappa$, et la forme $f$ est bornée uniformément sur chacun de ces ouverts). Il n'est pas difficile de voir que l'opérateur $U_{\pi_i}$ est borné, donc que $U_{\pi_i} f$ est bornée (c'est-à-dire qu'il existe un recouvrement admissible de $\overline{X}_{Iw}^{an}$ par des ouverts affinoïdes sur lesquels on a une trivialisation du faisceau $\omega^\kappa$, et $U_{\pi_i} f$ est bornée uniformément sur chacun de ces ouverts intersectés avec $X_{Iw}^{an}$). On peut alors appliquer le théorème précédent, et la forme $U_{\pi_i} f $ s'étend en une section de $\omega^\kappa$ sur $\overline{X}_{Iw}^{an}$. \\
L'opérateur $U_{\pi_i}$ agit donc bien sur l'espace $H^0(\overline{X}_{Iw}^{an}, \omega^\kappa)$. \\

\begin{rema}
Nous définirons dans la suite une norme sur l'espace $H^0(\overline{X}_{Iw}^{an}, \omega^\kappa)$, et majorerons la norme des opérateurs $U_{\pi_i}$ (voir la partie \ref{norme}).
\end{rema}

\noindent Nous avons donc défini $h$ opérateurs agissant sur l'espace des formes modulaires.

\section{Structures entières}

\subsection{Fonction degré}

Nous souhaitons définir des fonctions degrés sur les espaces $X_{Iw}^{an}$ et $\overline{X}_{Iw}^{an}$. Les sous-groupes universels $H_{i,j}$ étant définis sur des extensions finies de $\mathbb{Q}_p$ (et non sur leur anneau des entiers), on ne peut appliquer directement les résultats de \cite{Fa}. Le problème principal est l'absence de modèle entier pour la compactification ; en effet, si le schéma $\overline{X}_{Iw}$ admettait un bon modèle entier propre, on pourrait définir la fonction degré sur l'espace rigide associé à ce modèle entier, qui serait égal à $\overline{X}_{Iw}^{an}$ par propreté. Pour remédier à ce problème, nous allons utiliser une autre variété de Shimura, pour laquelle les structures entières sont bien connues.

\begin{defi} \label{siegel}
Soit $1\leq i \leq h$. Soit $\mathcal{A}_{ndg,Iw_i}$ la variété de Siegel sur $\mathbb{Z}_p$ paramétrant 
\begin{itemize}
\item un schéma abélien $A$ de dimension $ndg$.
\item $\lambda : A \to A^t$ est une polarisation de degré premier à $p$.
\item une structure de niveau principale en $N$, c'est-à-dire un isomorphisme $A[N] \simeq (\mathbb{Z} / N\mathbb{Z})^{2ndg}$ qui respecte les formes symplectiques à un scalaire près.
\item un sous-groupe $H$ de $A[p]$, totalement isotrope et de hauteur $nf_i g$.
\end{itemize}
\end{defi}

On dispose d'un morphisme naturel $\mathcal{P}_i : X_{Iw} \to \mathcal{A}_{ndg,Iw_i} \times K$, défini par $\mathcal{P}_i (A,\lambda,\iota,\eta,H_{i,j}) = (A,\lambda,\eta,H_{i,g})$. On notera $\mathcal{A}_{ndg,Iw_i}^{an}$, l'espace analytique associé à $\mathcal{A}_{ndg,Iw_i} \times K$, et on note toujours par $\mathcal{P}_i$ le morphisme $X_{Iw}^{an} \to \mathcal{A}_{dg,Iw_i}^{an}$.\\
D'après \cite{St}, il existe une compactification $\overline{\mathcal{A}}_{ndg,Iw_i}$ de $\mathcal{A}_{ndg,Iw_i}$ définie sur $\mathbb{Z}_p$. Si on note $\overline{\mathcal{A}}_{ndg,Iw_i}^{rig}$ l'espace rigide associé à $\overline{\mathcal{A}}_{ndg,Iw_i} \times_{\mathbb{Z}_p} O_K$, et $\overline{\mathcal{A}}_{ndg,Iw_i}^{an}$ l'espace analytique associé à $\overline{\mathcal{A}}_{ndg,Iw_i} \times K$, alors ces deux espaces sont égaux car l'espace compactifié est propre. Le sous-groupe universel $H$ sur $\mathcal{A}_{ndg,Iw_i}$ s'étend en un groupe quasi-fini et plat à $\overline{\mathcal{A}}_{ndg,Iw_i}$. De plus, d'après le théorème $\ref{morphcompact}$, on peut supposer (quitte à raffiner la décomposition polyhédrale utilisée pour construire $\overline{X}_{Iw}$) que le morphisme $\mathcal{P}_i : X_{Iw} \to \mathcal{A}_{ndg,Iw_i} \times K$ s'étende en $\overline{X}_{Iw} \to \overline{\mathcal{A}}_{ndg,Iw_i} \times K$, et donc induise un morphisme $\overline{X}_{Iw}^{an} \to \overline{\mathcal{A}}_{ndg,Iw_i}^{an}$. \\
Nous allons définir le degré de $H$ sur $\overline{\mathcal{A}}_{ndg,Iw_i}^{rig}$. Si $x$ est un point de $\mathcal{A}_{ndg,Iw_i}^{rig}$, alors le schéma en groupes $H$ correspondant à $x$ est fini et plat sur l'anneau des entiers d'une extension finie de $\mathbb{Q}_p$. On peut donc définir son degré par \cite{Fa}. Dans le cas général, si $x$ est un $L$-point de $\overline{\mathcal{A}}_{ndg,Iw_i}^{rig}$, alors le groupe $H$ au-dessus de $x$ est un schéma en groupes quasi-fini et plat sur $O_L$, l'anneau des entiers de $L$. Le schéma semi-abélien $A$ au-dessus de $x$ est obtenue par la construction de Mumford (voir \cite{F-C} par exemple) en quotientant un schéma semi-abélien $\widetilde{G}$ sur $O_L$, globalement extension d'un tore $T$ et d'un schéma abélien $A_0$ sur $O_L$, par un réseau étale $Y$ (on se référera à \cite{St} partie $1$ pour plus de détails). Comme explicité en annexe (partie $\ref{semiab}$), on a de plus une suite exacte en fibre générique
$$0 \to (\widetilde{G} [p])_\eta \to (A[p])_\eta \to \frac{1}{p} Y / Y \to 0$$
où $\eta$ désigne la fibre générique. Le sous-groupe $(\widetilde{G} [p])_\eta$ s'étend en un schéma en groupes fini et plat sur $O_L$ (qui est $\widetilde{G} [p]$). 
Soit $\widetilde{H}$ l'intersection de $H$ avec $\widetilde{G}[p]$ ; c'est un schéma en groupes fini et plat sur $O_L$. Il s'agit en fait du plus grand sous-groupe de $H$ qui est fini et plat sur $O_L$. On peut donc définir son degré par \cite{Fa}.

\begin{defi}
On définit la fonction degré sur $\overline{\mathcal{A}}_{ndg,Iw_i}^{rig}$ par deg$(x) = \frac{1}{n}$deg $\widetilde{H}$, pour tout $x \in \overline{\mathcal{A}}_{ndg,Iw_i}^{rig}$.
\end{defi}

On a ainsi défini une fonction deg : $\overline{\mathcal{A}}_{ndg,Iw_i}^{rig} \to [0,f_i g]$. Cette fonction est définie sur $\overline{\mathcal{A}}_{ndg,Iw_i}^{rig} = \overline{\mathcal{A}}_{ndg,Iw_i}^{an}$.

\begin{defi}
On définit la fonction Deg$_i : \overline{X}_{Iw}^{an} \to [0,f_ig]$ par Deg$_i (x) = $ deg $\mathcal{P}_i(x)$. On définit également la fonction degré Deg $: \overline{X}_{Iw}^{an} \to \prod_{i=1}^h [ 0 , f_i g]$ par $x \to ($Deg$_i (x))_i$.
\end{defi}

Pour tout produit d'intervalle $I = \prod_{k=1}^h I_k$, où $I_k$ est un sous-intervalle de $[0,f_k g]$, on note $\overline{X}_{Iw,I} := $ Deg$^{-1} (I)$. Le lieu ordinaire-multiplicatif $\overline{X}_{Iw}^{mult}$ de $\overline{X}_{Iw}^{an}$ correspond au lieu où les degrés des $H_{i,g}$ sont maximaux, soit à $\overline{X}_{Iw,I}$ avec $I=\prod_{i=1}^h \{f_i g \}$.

\begin{prop}
Si $I$ est un produit d'intervalles compacts à bornes rationnelles, alors $\overline{X}_{Iw,I}$ est quasi-compact.
\end{prop}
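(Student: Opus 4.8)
La stratégie consiste à transporter la question sur les variétés de Siegel $\overline{\mathcal{A}}_{ndg,Iw_i}$, dont on dispose d'un modèle entier propre. Comme $\overline{X}_{Iw}$ est un schéma propre sur $K$, son analytifié $\overline{X}_{Iw}^{an}$ est quasi-compact et quasi-séparé ; c'est dans cet espace que vit $\overline{X}_{Iw,I}$. Par définition de la fonction $\text{Deg}$, on a
$$\overline{X}_{Iw,I} = \text{Deg}^{-1}(I) = \bigcap_{i=1}^h \text{Deg}_i^{-1}(I_i) = \bigcap_{i=1}^h \mathcal{P}_i^{-1}\left( \deg^{-1}(I_i) \right),$$
où $\deg : \overline{\mathcal{A}}_{ndg,Iw_i}^{rig} \to [0,f_ig]$ désigne la fonction degré sur la variété de Siegel. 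Une intersection finie d'ouverts admissibles quasi-compacts dans un espace quasi-séparé étant encore quasi-compacte, il suffit de montrer que chaque $\mathcal{P}_i^{-1}(\deg^{-1}(I_i))$ est un ouvert admissible quasi-compact.

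Le point central est de vérifier que $\deg^{-1}(I_i)$ est un ouvert admissible quasi-compact de $\overline{\mathcal{A}}_{ndg,Iw_i}^{rig}$. D'après $\cite{St}$, la variété de Siegel compactifiée possède un modèle entier propre sur $O_K$, de sorte que l'espace $\overline{\mathcal{A}}_{ndg,Iw_i}^{rig}$ est quasi-compact. Rappelons que la fonction degré est définie, via $\cite{Fa}$, à l'aide du plus grand sous-groupe fini et plat $\widetilde{H}$ de $H$ : pour un tel sous-groupe sur $O_L$, le degré est la valuation du déterminant de son module des différentielles $\omega_{\widetilde{H}}$. Localement sur le modèle entier de $\cite{St}$, la fonction $\deg$ s'exprime donc (à la normalisation $\frac{1}{n}$ près) comme la valuation d'une section d'un faisceau inversible. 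Pour $a \leq b$ rationnels, la condition $a \leq \deg \leq b$ découpe alors, sur chaque ouvert affinoïde d'un recouvrement admissible, un domaine rationnel, donc quasi-compact : c'est précisément ici qu'intervient l'hypothèse de rationalité des bornes de $I_i$. Par quasi-compacité de $\overline{\mathcal{A}}_{ndg,Iw_i}^{rig}$, on se ramène à un nombre fini de tels ouverts, et $\deg^{-1}(I_i)$ est une réunion finie d'ouverts admissibles quasi-compacts, donc lui-même quasi-compact.

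Il reste à tirer cette propriété en arrière le long de $\mathcal{P}_i$. D'après le théorème $\ref{morphcompact}$, le morphisme $\mathcal{P}_i : \overline{X}_{Iw}^{an} \to \overline{\mathcal{A}}_{ndg,Iw_i}^{an}$ provient d'un morphisme de schémas propres sur $K$ ; il est donc propre, en particulier quasi-compact. L'image réciproque d'un ouvert admissible quasi-compact par un morphisme quasi-compact étant quasi-compacte, $\mathcal{P}_i^{-1}(\deg^{-1}(I_i))$ est un ouvert admissible quasi-compact de $\overline{X}_{Iw}^{an}$. En prenant l'intersection finie sur $1 \leq i \leq h$, on conclut que $\overline{X}_{Iw,I}$ est quasi-compact.

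Le principal obstacle attendu se situe au deuxième paragraphe, à savoir le contrôle de la fonction degré au voisinage du bord. En effet, le sous-groupe universel $H$ n'est que quasi-fini et plat sur le modèle entier de la compactification, et le degré est calculé à partir de sa partie finie et plate $\widetilde{H}$ ; il faudra donc s'assurer que la description locale de $\deg$ comme valuation d'une section reste valable uniformément près du bord, afin de garantir la quasi-compacité de $\deg^{-1}(I_i)$.
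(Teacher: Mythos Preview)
Your approach is essentially the same as the paper's: reduce to showing that each $Deg_i$ is the valuation of a section of an invertible sheaf coming from the integral model of the Siegel compactification, so that the preimage of a rational interval is cut out by rational conditions on an affinoid cover of a quasi-compact space. You correctly identify the only real difficulty, namely the behaviour at the boundary where $H$ is only quasi-finite.

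The paper resolves precisely the gap you flag in your last paragraph, and it is worth noting how: rather than arguing locally, it constructs a \emph{global} invertible sheaf $\mathcal{L} = (\det \omega_{A/H})^{-1} \otimes \det \omega_A$ on the integral model $\overline{\mathcal{A}}_{ndg,Iw_i}$, together with a global section $\delta_H$ induced by $p^* : \omega_{A/H} \to \omega_A$. The verification that $|\delta_H(x)| = p^{-n \deg x}$ at a boundary point $x$ uses the Mumford construction from the appendix: the semi-abelian scheme $A$ over $O_L$ is a quotient of $\widetilde{G}$ (globally an extension of a torus by an abelian scheme) by an \'etale lattice, and one has canonical isomorphisms $\omega_A \simeq \omega_{\widetilde{G}}$ and $\omega_{A/H} \simeq \omega_{\widetilde{G}/\widetilde{H}}$. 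Since $\widetilde{H}$ is genuinely finite flat, the norm of $\delta_H$ at $x$ is computed by the usual degree of $\widetilde{H}$, which is exactly $n \deg x$ by definition. This closes the gap and makes the rest of your argument go through; the paper then pulls back $\delta_H$ to $\overline{X}_{Iw}^{an}$ directly rather than first proving quasi-compactness on the Siegel side, but this is only a cosmetic difference from your presentation.
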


\begin{proof}
Commençons tout d'abord par remarquer que, puisque l'espace $\overline{X}_{Iw}$ est propre sur $K$, l'espace rigide-analytique $\overline{X}_{Iw}^{an}$ est quasi-compact. Soit $q : A \to A/H$ le morphisme universel au-dessus de $\overline{\mathcal{A}}_{ndg,Iw_i}$, il est quasi-fini et plat (il est fini sur $\mathcal{A}_{ndg,Iw_i}$). Soit $\omega_A' = \det e^* \Omega_A^1$ et $\omega_{A/H}' = \det e'^* \Omega_{A/H}^1$ les déterminants des faisceaux conormaux associés à $A$ et $A/H$ en leurs sections unités $e$ et $e'$. Soit $\mathcal{L} = {\omega_{A/H}'}^{-1} \otimes \omega_A'$ ; c'est un faisceau inversible sur $\overline{\mathcal{A}}_{ndg,Iw_i}$. Le morphisme $q^* : \omega_{A/H}' \to \omega_A'$ donne une section $\delta_H \in H^0(\overline{\mathcal{A}}_{ndg,Iw_i},\mathcal{L})$. On en déduit un faisceau inversible que l'on notera toujours $\mathcal{L}$ sur $\overline{\mathcal{A}}_{ndg,Iw_i}^{rig}$ et une section $\delta_H \in H^0(\overline{\mathcal{A}}_{ndg,Iw_i}^{rig},\mathcal{L})$. Ce faisceau est naturellement muni d'une norme (voir \cite{Ka}), et on a pour tout $L$-point $x$ de $\overline{\mathcal{A}}_{ndg,Iw_i}^{rig}$ (où $L$ est une extension finie de $\mathbb{Q}_p$), $ | \delta_H (x) | = p^{-  n \text{deg } x}$. En effet, en reprenant les notations précédentes, $x$ provient d'un point $x'$ du schéma formel associé à $\overline{\mathcal{A}}_{ndg,Iw_i}$. Soit $A$ le schéma semi-abélien défini sur $O_L$ au-dessus de $x'$ (quitte à prendre une extension de $L$), $A$ est le quotient de $\widetilde{G}$, extension d'un tore par un schéma abélien sur $O_L$, par un réseau étale $Y$. On renvoie à l'appendice (partie $\ref{semiab}$) pour plus de détails. On a alors un isomorphisme $\omega_A \simeq \omega_{\widetilde{G}}$. De plus, si on note $\widetilde{H}$ l'intersection de $H$ avec $\widetilde{G}[p]$, alors $A/H$ est le quotient de $\widetilde{G} / \widetilde{H}$ par un réseau étale. On a alors un isomorphisme $\omega_{A/H} \simeq \omega_{\widetilde{G}/\widetilde{H}}$. Au-dessus de $x'$, le faisceau $\mathcal{L}$ est isomorphe à $(\det \omega_{\widetilde{G} / \widetilde{H}})^{-1} \otimes \det \omega_{\widetilde{G}}$. Comme $\widetilde{H}$ est un schéma en groupes fini et plat sur $O_L$, on a bien $| \delta_H (x) | = p^{-  n \text{deg } x}$. Cela prouve que la fonction degré, définie a priori point par point, est en fait la valuation d'une fonction analytique. \\
Soit $\mathcal{L}_i := \mathcal{P}_i^* \mathcal{L}$, et $\delta_{H_{i,g}} := \mathcal{P}_i^* \delta_H$. Alors $\delta_{H_{i,g}} \in H^0 (\overline{X}_{Iw}^{an},\mathcal{L}_i)$, et la norme définie pour $\delta_H$ sur $\overline{\mathcal{A}}_{ndg,Iw_i}^{rig}$ donne naturellement une norme pour $\delta_{H_{i,g}}$, et on a $|\delta_{H_{i,g}}(x)| = p^{-n Deg_i(x)}$ pour tout $L$-point $x$ de $\overline{X}_{Iw}^{an}$. Cela permet de conclure la proposition.
\end{proof}

\begin{defi}
L'espace des formes modulaires surconvergentes est défini par
$$H^0(\overline{X}_{Iw}^{an},\omega^\kappa)^\dagger := \text{colim}_\mathcal{V} H^0 (\mathcal{V},\omega^\kappa)$$
où la colimite est prise sur les voisinages stricts $\mathcal{V}$ de $\overline{X}_{Iw}^{mult}$ dans $\overline{X}_{Iw}^{an}$.
\end{defi}

\begin{rema}
D'après le théorème $\ref{extension}$, on a 
$$H^0(\overline{X}_{Iw}^{an},\omega^\kappa)^\dagger = \text{colim}_\mathcal{V} H^0 (\mathcal{V},\omega^\kappa)^b$$
où la colimite est prise sur les voisinages stricts du lieu ordinaire-multiplcatif dans $X_{Iw}^{an}$, et où $H^0 (\mathcal{V},\omega^\kappa)^b$ désigne les fonctions bornées sur $\mathcal{V}$ (au sens de la norme que nous définirons dans le prochain paragraphe). La définition des formes surconvergentes est donc indépendante du choix combinatoire effectué pour la compactification. 
\end{rema}

\begin{rema}
Il s'agit d'une définition forte des formes surconvergentes. En effet, l'espace $X_{Iw}$ a un modèle entier $X_{Iw,0}$ définie sur l'anneau des entiers d'une extension finie de $\mathbb{Q}_p$. Une définition faible pour les formes surconvergentes est alors une section de $\omega^\kappa$ sur un voisinage strict du lieu ordinaire-multiplicatif dans $X_{Iw,0}^{rig}$, ce dernier espace étant la fibre générique de la complétion formelle de $X_{Iw,0}$ le long de sa fibre spéciale. 
\end{rema}

Une forme modulaire surconvergente est donc définie sur un espace du type $\overline{X}_{Iw,I}$ avec \\
$I=~\prod_{i=1}^h [f_i g -~ \varepsilon, f_i g]$, pour un certain $\varepsilon >0$.

Les fonctions degré se comportent relativement bien par rapport aux opérateurs de Hecke.

\begin{prop} \label{augmente}
Soit $1 \leq i \leq h$, $x \in \overline{X}_{Iw}^{an}$ et $y \in U_{\pi_i} (x)$. Soit $x_j= Deg_j(x)$, et $y_j=Deg_j(y)$ pour $1 \leq j \leq h$. Alors
\begin{itemize}
\item $y_j=x_j$ pour $j \neq i$.
\item $y_i \geq x_i$
\end{itemize}
De plus, s'il existe $y \in U_{\pi_i}^{2e_i} (x)$ avec $Deg_i(y)=Deg_i(x)$, alors $x_i \in \frac{1}{e_i} \mathbb{Z}$.
\end{prop}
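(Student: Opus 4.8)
The plan is to reduce everything to a point-by-point statement about finite flat group schemes over the ring of integers $O_L$ of a finite extension $L/\mathbb{Q}_p$, and then to apply the monotonicity of Fargues' degree function. First I would fix $x\in\overline{X}_{Iw}^{an}$ and $y\in U_{\pi_i}(x)$; by definition of the correspondence (extended to the compactifications $\overline{C}_i$) the pair $(x,y)$ comes from an $L$-point of $\overline{C}_i$, that is, from an isog\'enie $p:A\to A/L$ where $L\subset A[\pi_i]$ is totalement isotrope, stable par $O_B$, et suppl\'ementaire de $H_{i,g}$. Using the semi-stable reduction theorem and the Mumford construction recalled in the appendix (partie $\ref{semiab}$), I would spread $A$ and $A/L$ out to semi-abelian schemes over $O_L$ and work with the maximal finite flat parts $\widetilde{H}$ of the relevant quasi-finite groups, so that $\mathrm{Deg}_i(x)=\frac1n\deg\widetilde{H}_{i,g}$ and $\mathrm{Deg}_i(y)=\frac1n\deg\widetilde{H}'_{i,g}$, where $H'_{i,g}:=p(H_{i,g})$ is the subgroup defining $\mathcal{P}_i(y)$.

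For the equalities $y_j=x_j$ when $j\ne i$, I would use the decomposition $A[p^\infty]=\bigoplus_{k=1}^h A[\pi_k^\infty]$, which is respect\'ee by the $O_B$-action and extends to the partie de Mumford $\widetilde{G}$. Since $L\subset A[\pi_i]$, the isog\'enie $p$ restricts to an isomorphism on the $\pi_j$-divisible parts for every $j\ne i$; it therefore identifies $\widetilde{H}_{j,g}$ with $\widetilde{H}'_{j,g}$ as finite flat group schemes over $O_L$, and the invariance of the degree under isomorphism gives $y_j=x_j$.

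The inequality $y_i\ge x_i$ is the heart of the matter. The isog\'enie $p$ is defined integrally, so it restricts to a genuine morphism $\widetilde{H}_{i,g}\to\widetilde{H}'_{i,g}$ of finite flat group schemes over $O_L$ which is an isomorphism on generic fibres (because $H_{i,g}$ and $L$ are suppl\'ementaires, $H_{i,g}\cap L=0$ in the generic fibre). By the fundamental monotonicity property of the degree function ($\cite{Fa}$), a morphism of finite flat group schemes over $O_L$ that is a generic isomorphism cannot decrease the degree, and preserves it if and only if it is an isomorphism. Hence $\deg\widetilde{H}'_{i,g}\ge\deg\widetilde{H}_{i,g}$, that is $y_i\ge x_i$, with equality precisely when $p$ induces un isomorphisme $\widetilde{H}_{i,g}\xrightarrow{\sim}\widetilde{H}'_{i,g}$.

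For the last assertion I would iterate. Given $y\in U_{\pi_i}^{2e_i}(x)$ with $\mathrm{Deg}_i(y)=\mathrm{Deg}_i(x)$, the first two points show that $\mathrm{Deg}_i$ is non-decreasing along any chain realising $y\in U_{\pi_i}^{2e_i}(x)$; as the total variation vanishes, each of the $2e_i$ elementary steps preserves $\mathrm{Deg}_i$, so by the equality case each step induces un isomorphisme of the successive subgroups. The subgroup $H_{i,g}$ is thus carried isomorphiquement through $2e_i$ applications of the correspondence. To conclude I would invoke the degr\'es partiels attached to the $O_{F_i}$-action defined in the appendix (partie $\ref{semiab}$): they refine $\mathrm{Deg}_i$, are index\'es par les plongements du corps r\'esiduel de $F_i$, and are cyclically permuted by the correspondence, the ramified direction contributing the factor $e_i$. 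A subgroup stable up to isomorphism under $2e_i$ steps must have partial degrees constant along les orbites de Frobenius et enti\`eres dans la direction ramifi\'ee; summing and dividing by $n$ then forces $x_i\in\frac1{e_i}\mathbb{Z}$. The main obstacle is precisely this last step: controlling how the degr\'es partiels are shifted by the Hecke correspondence in the ramified case, and checking that the combinatorics of the $O_{F_i}$-action really produce the denominator $e_i$ after $2e_i$ iterations, is the delicate point and is where the ramification genuinely enters.
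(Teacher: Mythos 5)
Your handling of the first two assertions is correct and essentially the paper's: the invariance of $Deg_j$ for $j\neq i$ comes from the splitting $A_0[p]=\oplus_k A_0[\pi_k^{e_k}]$, and for $y_i\geq x_i$ your appeal to Fargues' monotonicity (a morphism of finite flat group schemes over $O_L$ which is an isomorphism in generic fibre cannot decrease the degree) is a harmless variant of the paper's subadditivity argument $\deg\widetilde{H}_{i,g}+\deg\widetilde{L}\leq\deg(\widetilde{H}_{i,g}+\widetilde{L})\leq\deg\widetilde{G}[\pi_i]$; both rest on \cite{Fa}, and your verification that $\widetilde{H}_{i,g}\to\widetilde{H}'_{i,g}$ is still a generic isomorphism at the boundary (the \'etale lattice is untouched by the Mumford quotient) is indeed the point needing care there.

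The last assertion, however, contains a genuine gap, which you yourself flag as the ``main obstacle''. The tool you invoke does not exist: the degr\'es partiels of the appendix (partie \ref{partial}, not \ref{semiab}) are defined only for an action of the ring of integers of an \emph{unramified} extension of $\mathbb{Q}_p$, so there is no partial degree attached to the ramified direction; moreover the correspondence $U_{\pi_i}$, being a quotient by a supplement of $H_{i,g}$, does not permute embeddings in any Frobenius-like way. And knowing that $H_{i,g}$ is carried isomorphically through the $2e_i$ steps does not by itself force integrality of anything. The paper's proof analyses instead the subgroup one divides by: with $L\subset A[\pi_i^{2e_i}]$ the total kernel, $\widetilde{Fil}_k=L_0[\pi_i^k]\cap\widetilde{G}[p^2]$, $\widetilde{Gr}_k=\widetilde{Fil}_k/\widetilde{Fil}_{k-1}$ and $\widetilde{H}^{(k)}=(\widetilde{G}/\widetilde{Fil}_{k-1})[\pi_i]/\widetilde{Gr}_k$, the equality of degrees at every step forces the splittings $(\widetilde{G}/\widetilde{Fil}_k)[\pi_i]=\widetilde{H}^{(k)}\oplus\widetilde{Gr}_{k+1}$, hence that all $\widetilde{Gr}_k$ have the same degree and that $\deg\widetilde{Fil}_{k+l}=\deg\widetilde{Fil}_k+\deg\widetilde{Fil}_l$. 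By \cite{Fa}, this additivity implies that the sequence $0\to\widetilde{Fil}_k\to\widetilde{Fil}_{k+l}\overset{\pi_i^k}{\to}\widetilde{Fil}_l\to 0$ is exact; taking $k=l=e_i$ shows that $\widetilde{L}$ is a truncated Barsotti-Tate group of level $2$, so that $\deg\widetilde{Fil}_{e_i}=\deg\widetilde{L}[p]$ is an integer. One then concludes with the relation $Deg_i(x)=f_ig-\frac{1}{e_i}\deg\widetilde{Fil}_{e_i}$. This passage from degree equalities to a Barsotti-Tate structure on the kernel, and from there to integrality, is precisely the idea missing from your proposal.
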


\begin{proof}
Au-dessus du point $x$, on dispose d'une variété semi-abélienne $A$ munie d'une action de $O_B$, définie sur une extension finie $M$ de $\mathbb{Q}_p$, et d'un drapeau complet $H_{j,1} \subset \dots \subset H_{j,g}$ de $A[\pi_j]$ pour tout $j$. De même, au-dessus de $y$, on a une variété semi-abélienne $A'$ et des sous-groupes $H_{j,k}'$, tel que ceux-ci sont obtenus à partir des données précédentes en quotientant par un sous-groupe $L$ de $A[\pi_i]$, qui est un supplémentaire de $H_{i,g}$. De plus, quitte à remplacer $M$ par une de ses extensions finies, il existe un schéma semi-abélien $A_0$ défini sur $O_M$, tel que $A=A_0 \otimes_{O_L} L$. Par extension des sous-objets, les sous-groupes $H_{j,k}$ et $L$ de $A[p]$ s'étendent en des sous-groupes $H_{j,k,0}$ et $L_0$ de $A_0[p]$. De même, l'action de $O_B$ se relève à $A_0$, et les sous-groupes $H_{k,l,0}$ sont dans $A_0[\pi_k]$. Enfin, il existe un schéma semi-abélien $\tilde{G}$ sur $O_M$, extension d'un tore par un schéma abélien, telle que $A_0$ soit obtenu par la construction de Mumford en quotientant $\widetilde{G}$ par un réseau étale (on renvoie encore à l'appendice pour plus de détails). On a alors une inclusion $\widetilde{G}[p] \subset A_0 [p]$ ; soit $\widetilde{H}_{j,k} = \widetilde{G}[p] \cap H_{j,k,0}$ et $\widetilde{L} = \widetilde{G}[p] \cap L_0$. Soit $H_{j,k,0}'$ l'image de $H_{j,k,0}$ dans $A_0 / L$. Comme $A_0[p] = \oplus_{k=1}^h A_0[\pi_k^{e_k}]$ et que $\widetilde{L}$ est inclus dans $A_0[\pi_i]$, si $j \neq i$, les groupes quasi-finis et plats $H_{j,g,0}$ et $H_{j,g,0}'$ sont isomorphes, et donc $y_j=x_j$. L'élément $x_i$ est égal au degré de $\widetilde{H}_{i,g}$ divisé par $n$, et comme $L$ est un supplémentaire de $H_{i,g}$ dans $A[\pi_i]$, l'élément $y_i$ est égal au degré de $\widetilde{G}[\pi_i] / \widetilde{L}$ divisé par $n$. Or par les propriétés de la fonction degré, on a
$$\text{deg } \widetilde{H}_{i,g} + \text{ deg } \widetilde{L} \leq \text{ deg } (\widetilde{H}_{i,g} + \widetilde{L})  \leq \text{ deg } \widetilde{G}[\pi_i]$$
ce qui donne $x_i \leq y_i$. \\
Pour prouver le second point, supposons qu'il existe $y \in U_{\pi_i^{2e_i}}(x)$ avec $Deg_i(y) =Deg_i(x)$. On dispose au-dessus de  $x$ d'un couple $(A,\lambda,\iota,\eta,H_{k,l})$, et le schéma semi-abélien au-dessus de $y$ est obtenu en quotientant par un sous-groupe $L$ de $A[\pi_i^{2e_i}]$. De plus, comme $A[\pi_i^\infty]$ est muni d'une action de $M_n(F_i)$, il existe un groupe de Barsotti-Tate principalement polarisé $G_i$ muni d'une action de $O_{F_i}$ tel que $A[\pi_i^\infty] = (\mathbb{Q}_p / \mathbb{Z}_p)^n \otimes_{\mathbb{Z}_p} G_i$. De même le sous-groupe $H_{i,g}$ s'écrit $(\mathbb{Z} / p\mathbb{Z})^n \otimes H_{i,g}^0$, où $H_{i,g}^0$ est un sous-groupe de $G_i[\pi_i]$. On voit donc que quitte à travailler avec $G_i$ et $H_{i,g}^0$, on peut se ramener au cas où $n=1$. \\
On note $Fil_k = L[\pi_i^k]$, pour $0 \leq k \leq 2 e_i$, et $Gr_k = Fil_k / Fil_{k-1}$ pour $1 \leq k \leq 2e_i$. De même que précédemment, il existe un schéma semi-abélien $A_0$ défini sur l'anneau des entiers d'une extension finie $M$ de $\mathbb{Q}_p$, étendant le schéma semi-abélien $A$. L'action de $O_B$ se relève à $A_0$, de même que les sous-groupes $H_{i,g}$ et $L$. On notera $H_{i,g,0}$ et $L_0$ les sous-groupes de $A_0[\pi_i^{2e_i}]$ étendant respectivement $H_{i,g}$ et $L$. De même, il existe un schéma semi-abélien $\widetilde{G}$, extension d'un tore par un schéma abélien sur $O_M$, telle que $A_0$ soit obtenue en quotientant $\widetilde{G}$ par un réseau étale à l'aide de la construction de Mumford. On note $\widetilde{H} = H_{i,g,0} \cap \widetilde{G}[p]$ ; comme on a supposé $n=1$, le degré de $\widetilde{H}$ est précisément $x_i$. De même, on note $\widetilde{Fil}_k=L_0[\pi_i^k] \cap \widetilde{G}[p^2]$ et $\widetilde{Gr}_k = \widetilde{Fil}_k / \widetilde{Fil}_{k-1}$. Notons $\widetilde{H}^{(k)} = (\widetilde{G}/ \widetilde{Fil}_{k-1})[\pi_i] / \widetilde{Gr}_k$, nous avons une chaîne de morphismes
$$\widetilde{H} \to \widetilde{H}^{(1)} \to \widetilde{H}^{(2)} \to \dots \to \widetilde{H}^{(2e_i)}$$
Dans $(\widetilde{G}/\widetilde{Fil}_k)[\pi_i]$, on a deux sous-groupes disjoints : $\widetilde{H}^{(k)}$ et $\widetilde{Gr}_{k+1}$. On a alors
$$\text{deg } \widetilde{H}^{(k)} + \text{ deg } \widetilde{Gr}_{k+1} \leq \text{ deg } (\widetilde{H}^{(k)} + \widetilde{Gr}_{k+1})  \leq \text{ deg } (\widetilde{G}/\widetilde{Fil}_k)[\pi_i]$$
d'où deg $\widetilde{H}^{(k)} \leq \widetilde{H}^{(k+1)}$ pour tout $k$. On a donc deg $\widetilde{H} \leq $ deg $\widetilde{H}^{(1)} \leq \dots \leq $ deg $\widetilde{H}^{(2e_i)} $. Or $Deg_i(x) = $ deg $\widetilde{H}$, et $Deg_i(y) = $ deg $\widetilde{H}^{(2e_i)}$, et par hypothèse $Deg_i(x) = Deg_i(y)$. On en déduit que les inégalités précédentes sont en fait des égalités, et que l'on a deg $\widetilde{H}^{(k)} = Deg_i(x)$ pour tout $0 \leq k \leq 2e_i$. \\
L'égalité entre degrés montre que l'on a $(\widetilde{G}/ \widetilde{Fil}_k )[\pi_i] = \widetilde{H}^{(k)} \oplus \widetilde{Gr}_{k+1}$. On voit que le degré de $\widetilde{Gr}_{k}$ est constant, et que deg $\widetilde{Fil}_{k} = k$ deg $\widetilde{Fil}_{1}$. En particulier pour $k$ et $l$ compris entre $0$ et $e_i$, on a deg $\widetilde{Fil}_{k+l} =$ deg $\widetilde{Fil}_k + $ deg $\widetilde{Fil}_l$ ; d'après les propriétés de la fonction degré, la suite
$$0 \to \widetilde{Fil}_k \to \widetilde{Fil}_{k+l} \overset{\pi_i^k}{\to} \widetilde{Fil}_l \to 0$$
est exacte. En appliquant cette relation pour $k=l=e_i$, on voit que $\widetilde{L}$ est un Barsotti-Tate tronqué d'échelon $2$. En particulier, son degré et celui de $\widetilde{Fil}_{e_i}$ sont entiers. La proposition découle de la relation $Deg_i(x) = f_i g - \frac{1}{e_i} $ deg $\widetilde{Fil}_{e_i}$.
\end{proof}

L'opérateur de Hecke $U_{\pi_i}$ augmente donc la $i$-ième fonction degré, et ne modifie pas les autres. De plus, il augmente strictement la fonction $Deg_i$, sauf éventuellement aux points où cette fonction est un multiple de $1/e_i$. Nous avons même un résultat plus fort.

\begin{prop} 
Soit $1 \leq i \leq h$, $k$ un entier compris entre $0$ et $f_i e_ig-1$ et $0 < \alpha < \beta <1$ deux rationnels. Alors il existe $\varepsilon > 0$ tel que $Deg_i (y) \geq Deg_i(x) + \varepsilon$, pour tout $x \in Deg_i^{-1} ([\frac{k+\alpha}{e_i} , \frac{k+ \beta}{e_i} ])$ et $y \in U_{\pi_i}^{2e_i} (x)$.
\end{prop}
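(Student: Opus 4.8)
Le plan est d'obtenir cette minoration uniforme à partir de la stricte croissance \emph{ponctuelle} fournie par la proposition $\ref{augmente}$, en invoquant la quasi-compacité. Je commencerais par poser $V := Deg_i^{-1}([\tfrac{k+\alpha}{e_i}, \tfrac{k+\beta}{e_i}])$. Comme $\alpha$ et $\beta$ sont rationnels, cet espace coïncide avec un $\overline{X}_{Iw,I}$ où $I$ est un produit d'intervalles compacts à bornes rationnelles (l'intervalle ci-dessus pour la $i$-ème coordonnée, et $[0,f_jg]$ pour les autres) ; il est donc quasi-compact d'après la proposition de quasi-compacité précédente. L'hypothèse $0 < \alpha < \beta < 1$ garantit de plus que $Deg_i(x) \in (\tfrac{k}{e_i}, \tfrac{k+1}{e_i})$ pour tout $x \in V$, et en particulier que $Deg_i(x) \notin \tfrac{1}{e_i}\mathbb{Z}$ : c'est exactement la situation où la proposition $\ref{augmente}$ assure une croissance stricte.

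Ensuite, je réaliserais l'opérateur $U_{\pi_i}^{2e_i}$ par une correspondance globale. En formant le produit fibré de $2e_i$ copies de $\overline{C}_i$ le long de $p_2$ et $p_1$, j'obtiens un espace $C$ muni de deux morphismes $p_1, p_2 : C \to \overline{X}_{Iw}$, propres puisque $\overline{C}_i$ et $\overline{X}_{Iw}$ sont propres sur $K$, et tels que $p_2(p_1^{-1}(x)) = U_{\pi_i}^{2e_i}(x)$. Comme $p_1$ est propre, l'image réciproque $D := p_1^{-1}(V) \subset C^{an}$ est encore quasi-compacte. Sur $C^{an}$ je considérerais la fonction $\Phi := Deg_i \circ p_2 - Deg_i \circ p_1$. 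La proposition $\ref{augmente}$ donne $\Phi \geq 0$ partout ; et puisque $Deg_i(p_1(c)) \notin \tfrac{1}{e_i}\mathbb{Z}$ pour tout $c \in D$, sa seconde partie interdit l'égalité des deux degrés, d'où $\Phi > 0$ sur $D$.

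Il resterait alors à transformer cette positivité ponctuelle en une minoration uniforme sur le quasi-compact $D$, et c'est là le point le plus délicat. L'idée est que $Deg_i$ n'est pas une simple fonction continue abstraite, mais la valuation d'une section analytique : on a $Deg_i = -\tfrac{1}{n}\log_p |\delta_{H_{i,g}}|$, et $\delta_{H_{i,g}}$ est partout non nulle car $|\delta_{H_{i,g}}(x)| = p^{-n Deg_i(x)} \geq p^{-n f_i g} > 0$. La quantité $p^{-n\Phi} = |p_2^* \delta_{H_{i,g}}| \, |p_1^* \delta_{H_{i,g}}|^{-1}$ est donc la norme d'une section partout non nulle du fibré inversible métrisé $p_2^* \mathcal{L}_i \otimes (p_1^* \mathcal{L}_i)^{-1}$. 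En recouvrant $D$ par un nombre fini d'affinoïdes (possible par quasi-compacité) et en appliquant le principe du maximum sur chacun, cette norme atteint sa borne supérieure sur $D$ ; étant partout strictement inférieure à $1$, cette borne vaut $p^{-n\varepsilon}$ pour un $\varepsilon > 0$. On en déduit $\Phi \geq \varepsilon$ sur $D$, puis, pour $x \in V$ et $y \in U_{\pi_i}^{2e_i}(x)$, en relevant le couple $(x,y)$ en un point $c \in D$, l'inégalité voulue $Deg_i(y) - Deg_i(x) = \Phi(c) \geq \varepsilon$.

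La principale difficulté que j'anticipe est précisément ce dernier passage : il faut s'assurer que la métrique sur $\mathcal{L}_i$, provenant de la structure entière sur $\overline{\mathcal{A}}_{ndg,Iw_i}$, se comporte suffisamment bien (c'est une métrique de modèle) pour que la norme atteigne effectivement ses bornes sur les affinoïdes, et que tout ceci reste valable au bord, là où les morphismes $p_1, p_2$ cessent d'être finis étales. Le choix de travailler sur les compactifications $\overline{C}_i$ et $\overline{X}_{Iw}$, sur lesquelles $Deg_i$ et la proposition $\ref{augmente}$ sont déjà définies, devrait permettre de contourner cette dernière subtilité.
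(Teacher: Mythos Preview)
Your proposal is correct and follows essentially the same route as the paper: establish strict pointwise positivity of $Deg_i\circ p_2 - Deg_i\circ p_1$ on a quasi-compact correspondence space via Proposition~\ref{augmente}, realise this difference as the valuation of a nowhere-vanishing section of an invertible metrised sheaf, and conclude by the maximum principle. The only cosmetic difference is that the paper builds the correspondence for $U_{\pi_i}^{2e_i}$ directly as a moduli space of subgroups $L\subset A[\pi_i^{2e_i}]$ (with two maps $f_1,f_2$ to the Siegel variety $\overline{\mathcal A}_{ndg,Iw_i}$ to pull back the sections $\delta$), whereas you form the $2e_i$-fold fibred product of $\overline{C}_i$ and pull back via $p_1,p_2$ to $\overline{X}_{Iw}$; both constructions give proper $K$-schemes whose analytifications are quasi-compact, and the argument is otherwise identical.
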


\begin{proof}
Définissons $C_i$ comme l'espace de modules sur $K$ paramétrant les $(x,L)$ avec $x=(A,\lambda,\iota,\eta,H_{j,k}) \in X_{Iw}$ et $L$ un sous-groupe fini et plat de $A[\pi_i^{2e_i}]$, stable par $O_B$, totalement isotrope et supplémentaire de $H_{i,g}$ dans $A[\pi_i^{2e_i}]$. Notons $\overline{C_i}$ une compactification de $C_i$ compatible avec $\overline{X}_{Iw}^{an}$, et $\overline{C_i}^{an}$ l'espace analytique associé. On dispose d'un morphisme d'oubli $p_1 : \overline{C_i}^{an} \to \overline{X}_{Iw}^{an}$. \\
On veut définir les degrés de $H_{i,g}$ et $ H_{i,g}' := $Im $(H_{i,g} \to A/L)$ sur $\overline{C_i}^{an}$ comme valuations d'une fonction analytique. Il nous faut pour cela utiliser encore l'espace $\mathcal{A}_{ndg,Iw_i}$. On définit deux morphismes $f_1,f_2 : C_i \to \mathcal{A}_{ndg,Iw_i} \times K$ par $f_1(A,\lambda,\iota,\eta,H_{j,k},L) = (A,\lambda,\eta,H_{i,g})$ et $f_2(A,\lambda,\iota,\eta,H_{j,k},L) = (A/L,\lambda',\eta',H_{i,g}')$. On peut supposer que ces morphismes s'étendent aux compactifications et induisent des morphismes $\overline{C_i}^{an} \to \overline{\mathcal{A}}_{ndg,Iw_i}^{an}$. On démontre comme précédemment qu'il existe des faisceaux inversibles $\mathcal{L}_{H_{i,g}}$, $\mathcal{L}_{H_{i,g}'}$ sur $\overline{C_i}^{an}$, munis d'une norme canonique, et des sections $\delta_{H_{i,g}} \in H^0(\overline{C_i}^{an} ,\mathcal{L}_{H_{i,g}})$,  $\delta_{H_{i,g}'} \in H^0(\overline{C_i}^{an} ,\mathcal{L}_{H_{i,g}'})$, tels que les degrés de $H_{i,g}$ et $H_{i,g}'$ sont égaux (à un facteur près) à la valuation de la norme de ces sections. \\
 D'après la proposition précédente, le degré de $H_{i,g}'$ est strictement supérieur à celui de $H_{i,g}$ sur $p_1^{-1} (Deg_i^{-1} ([\frac{k+\alpha}{e_i} , \frac{k+ \beta}{e_i} ]))$. Le principe du maximum montre qu'il existe $\varepsilon >0$ tel que deg $H_{i,g}' \geq$~deg~$H_{i,g} +~\varepsilon$ sur ce dernier espace car il est quasi-compact.
\end{proof}

\begin{coro} \label{dyna}
Plaçons-nous sous les hypothèses de la proposition précédente. Alors il existe un entier $N$ tel que
$$U_{\pi_i}^N \left( Deg_i^{-1} ( [\frac{k+\alpha}{e_i},f_i g] ) \right) \subset Deg_i^{-1} ( [\frac{k+\beta}{e_i}, f_i g ] )$$
\end{coro}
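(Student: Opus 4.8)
The plan is to deduce this dynamical statement from the previous proposition by an iteration argument, exploiting the fact that the degree function $Deg_i$ takes values in the compact set $[0,f_ig]$ and that $U_{\pi_i}$ never decreases it (Proposition \ref{augmente}). First I would fix the rational $\alpha < \beta$ and the integer $k$, and recall that the previous proposition furnishes a uniform $\varepsilon > 0$ such that any point $y \in U_{\pi_i}^{2e_i}(x)$ satisfies $Deg_i(y) \geq Deg_i(x) + \varepsilon$ whenever $x \in Deg_i^{-1}([\frac{k+\alpha}{e_i}, \frac{k+\beta}{e_i}])$. The idea is that repeated application of $U_{\pi_i}^{2e_i}$ forces the degree to climb by at least $\varepsilon$ at each stage, so after finitely many steps it must escape the band $[\frac{k+\alpha}{e_i}, \frac{k+\beta}{e_i}]$ from above.

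More precisely, I would argue as follows. Let $x \in Deg_i^{-1}([\frac{k+\alpha}{e_i}, f_ig])$ and let $y \in U_{\pi_i}^N(x)$ for $N$ to be chosen. If at some intermediate stage a point already has $Deg_i \geq \frac{k+\beta}{e_i}$, then since $U_{\pi_i}$ only increases $Deg_i$ (first point of Proposition \ref{augmente}), all its further images remain in $Deg_i^{-1}([\frac{k+\beta}{e_i}, f_ig])$ and we are done for that branch. Otherwise, as long as a point $z$ in the orbit satisfies $\frac{k+\alpha}{e_i} \leq Deg_i(z) < \frac{k+\beta}{e_i}$, it lies in the band covered by the previous proposition (after possibly noting that the lower bound $\frac{k+\alpha}{e_i}$ is preserved by monotonicity), so applying $U_{\pi_i}^{2e_i}$ raises its degree by at least $\varepsilon$. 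Since the total vertical room inside the band is $\frac{\beta - \alpha}{e_i}$, after at most $\lceil \frac{\beta-\alpha}{e_i \varepsilon} \rceil$ applications of $U_{\pi_i}^{2e_i}$ the degree must have crossed the threshold $\frac{k+\beta}{e_i}$. Taking $N = 2e_i \cdot \lceil \frac{\beta-\alpha}{e_i \varepsilon} \rceil$ then yields $U_{\pi_i}^N\big(Deg_i^{-1}([\frac{k+\alpha}{e_i}, f_ig])\big) \subset Deg_i^{-1}([\frac{k+\beta}{e_i}, f_ig])$.

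The one subtlety I would take care to address is the bookkeeping of the Hecke correspondence as a multivalued map: $U_{\pi_i}^N(x)$ is a set of points, and I must ensure the monotonicity and increment estimates apply along every branch of the orbit simultaneously. This is handled by the uniformity of $\varepsilon$ over the whole band in the previous proposition, which makes the increment independent of the chosen point, so the argument goes through for each image point uniformly. The main (and only real) obstacle is therefore not analytic but purely combinatorial: one must check that the lower bound $\frac{k+\alpha}{e_i}$ stays valid throughout the iteration so that every intermediate point genuinely lies in the band where the previous proposition applies — this follows immediately from the monotonicity $y_i \geq x_i$ of Proposition \ref{augmente}. With that in hand the corollary is a clean consequence, and no new geometric input beyond the two preceding propositions is required.
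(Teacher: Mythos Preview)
Your proposal is correct and follows essentially the same approach as the paper: both combine the monotonicity from Proposition \ref{augmente} with the uniform increment $\varepsilon$ from the preceding proposition to force the degree across the threshold after finitely many iterations of $U_{\pi_i}^{2e_i}$. The only cosmetic difference is that the paper phrases it as a short argument by contradiction (if no $N$ works, one obtains points whose degree grows unboundedly), whereas you give the direct constructive version with an explicit $N$; the mathematical content is identical.
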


\begin{proof}
Supposons par l'absurde que ce ne soit pas le cas. Alors pour tout entier $n$, il existe $x_n$ avec $Deg_i(x_n) \in [\frac{k+\alpha}{e_i},f_i g]$, et $y_n \in U_{\pi_i}^n (x_n)$ avec $Deg_i (y_n) \leq \frac{k+\beta}{e_i}$. Comme l'opérateur $U_{\pi_i}$ augmente la fonction $Deg_i$, on a $\frac{k+\alpha}{e_i} \leq Deg_i(x_n) \leq \frac{k+\beta}{e_i}$. Or d'après la proposition précédente, il existe $\varepsilon >0$ tel que l'opérateur $U_{\pi_i}^{2e_i}$ augmente la fonction $Deg_i$ d'au moins $\varepsilon$ sur $Deg_i^{-1} ([\frac{k+\alpha}{e_i} , \frac{k+ \beta}{e_i} ])$. On en déduit que $Deg_i(y_{2e_i n}) \geq n \varepsilon + Deg_i(x_{2e_i n}) \geq n \varepsilon$ ce qui est impossible.
\end{proof}

\subsection{Normes} \label{norme}

Nous souhaitons définir une norme sur l'espace des formes modulaires définies sur un ouvert $\mathcal{U}$ quasi-compact de $\overline{X}_{Iw}^{an}$, c'est-à-dire sur l'espace $H^0(\mathcal{U},\omega^\kappa)$. Comme l'espace $\overline{X}_{Iw}^{an}$ ne provient pas (canoniquement) d'un schéma formel défini sur l'anneau des entiers d'une extension de $\mathbb{Q}_p$, on ne peut appliquer directement \cite{Ka}. Bien sûr, il est possible de définir de manière non canonique une norme sur l'espace des sections d'un faisceau localement libre sur un espace rigide, mais il sera difficile de prouver certaines propriétés. (Si $Y = $ Spm $A$ est un espace affinoïde, et $f \in A$, alors la norme de $f(y)$ est définie canoniquement pour $y \in Y$. En revanche, si $\mathcal{F}$ est un faisceau inversible, on peut définir une norme sur $H^0(Y,\mathcal{F})$ qui dépendra de la trivialisation de $\mathcal{F}$.) \\
Soit $\mathcal{A}_{ndg}$ le schéma sur $\mathbb{Z}_p$ paramétrant les schémas abéliens de dimension $ndg$, avec une polarisation de degré premier à $p$, et une structure de niveau $N$. Soit également $\overline{\mathcal{A}}_{ndg}$ une compactification toroïdale de $\mathcal{A}_{ndg}$ (construite dans \cite{F-C}), avec un choix combinatoire compatible avec celui de $\overline{X}_{Iw}$. On notera $A$ le schéma semi-abélien universel sur $\overline{\mathcal{A}}_{ndg}$. Pour définir le schéma suivant, nous nous inspirons de \cite{SaHilbert}.

\begin{defi}
Soit $\widetilde{\mathcal{A}}_{ndg}$ l'espace de modules sur $\mathbb{Z}_p$ dont les $S$-points sont :
\begin{itemize}
\item un point $x \in \overline{\mathcal{A}}_{ndg}(S)$.
\item une filtration $0 = \omega_{A,0} \subset \omega_{A,1} \subset \dots \subset \omega_{A,nd} = \omega_A$ telle que pour tout $1 \leq i \leq nd$, $\omega_{A,i} / \omega_{A,i-1}$ est localement un $\mathcal{O}_S$-facteur direct de $\omega_A / \omega_{A,i-1}$ de rang $g$.
\end{itemize}
\end{defi}

L'espace $\widetilde{\mathcal{A}}_{ndg}$ est donc un schéma sur $\mathbb{Z}_p$, et est égal à la fibration de $\overline{\mathcal{A}}_{ndg}$ par une grassmanienne. Comme $\overline{\mathcal{A}}_{ndg}$ est propre sur $\mathbb{Z}_p$, $\widetilde{\mathcal{A}}_{ndg}$ l'est également. \\
Soit $\mathcal{T}_i = Isom_{\mathcal{O}_{\widetilde{\mathcal{A}}_{ndg}}} (\omega_{A,i} / \omega_{A,i-1} , \mathcal{O}_{\widetilde{\mathcal{A}}_{ndg}}^g)$ pour $1 \leq i \leq d$. On note $\phi_i : \mathcal{T}_i \to \widetilde{\mathcal{A}}_{ndg}$ la projection. L'espace $\mathcal{T}_i$ est un torseur sur $\widetilde{\mathcal{A}}_{ndg}$ pour le groupe $GL_g$. Si $\kappa_i=(k_j)$ est un élément de $\mathbb{Z}^g$, on note $\omega_i^{\kappa_i} = {\phi_i}_* \mathcal{O}_{\mathcal{T}_i} [- \kappa_i']$, où $\kappa'=(k_{g+1-j})$, et où ${\phi_i}_* \mathcal{O}_{\mathcal{T}_i} [- \kappa_i']$ est le sous-faisceau de ${\phi_i}_* \mathcal{O}_{\mathcal{T}_i}$ où le tore de $GL_g$ agit par $-\kappa_i'$, et où le radical unipotent agit trivialement. \\
Rappelons que nous avons défini le poids d'une forme modulaire comme un couple $(k_{i,\sigma})_{1\leq i \leq g, \sigma \in \Sigma}$, où $\Sigma$ est l'ensemble des plongements de $F$ dans $\overline{\mathbb{Q}}$, vérifiant $k_{1,\sigma} \geq \dots \geq k_{g,\sigma}$, pour tout $\sigma \in \Sigma$. De plus, $\Sigma$ est l'union disjointe des $\Sigma_i$, où $\Sigma_i$ est l'ensemble des plongements qui se factorisent par $F_i$. Chaque $\Sigma_i$ est de cardinal $e_i f_i$. On fixe une numérotation sur chaque $\Sigma_i$, c'est-à-dire une bijection entre $\Sigma_i$ et l'ensemble $\{1, \dots, e_i f_i \}$. Ces choix donnent une bijection entre $\Sigma$ et $\{1, \dots, d \}$. Un poids est donc un couple $(\kappa_i)_{1 \leq i \leq d}$, où chaque $\kappa_i$ est un élément dominant de $\mathbb{Z}^g$. On note alors $\omega_0^\kappa$ le faisceau défini sur $\widetilde{\mathcal{A}}_{ndg}$ par $\omega_0^\kappa := \otimes_{i=1}^d \omega_i^{\kappa_i}$. \\
$ $\\
Soit $\widetilde{\mathcal{A}}_{ndg}^{rig}$ l'espace rigide associé à $\widetilde{\mathcal{A}}_{ndg} \times_{\mathbb{Z}_p} O_K$. Puisque ce dernier est propre sur $\mathbb{Z}_p$, on a $\widetilde{\mathcal{A}}_{ndg}^{rig} = \widetilde{\mathcal{A}}_{ndg}^{an}$. On notera encore $\omega_0^\kappa$ le faisceau induit sur cet espace. D'après \cite{Ka}, on peut définir canoniquement une norme sur l'espace $H^0(\mathcal{U},\omega_0^\kappa)$, pour tout ouvert quasi-compact $\mathcal{U}$ de $\widetilde{\mathcal{A}}_{ndg}^{rig}$. On notera $\widetilde{\omega}_0^\kappa$ le sous-faisceau des fonctions de norme plus petite que $1$. \\
$ $\\
Le faisceau $\omega_A$ défini sur $\overline{X}_{Iw}$ est muni d'une action de $B \otimes_{\mathbb{Q}} \mathbb{Q}_p = \prod_{i=1}^h $M$_n(F_i)$. Par équivalence de Morita, la catégorie des $\prod_{i=1}^h $M$_n(F_i)$-modules et celle des $\prod_{i=1}^h F_i$-modules sont équivalentes. L'équivalence de catégorie est simplement donnée par $R \to E \cdot R$. On rappelle que $E = \prod_{i=1}^h E_{1,1}$ où $E_{1,1}$ est la matrice dont tous les coefficients sont nuls sauf celui en position $(1,1)$. Soit donc $\omega_{A,d} = E \cdot \omega_{A}$. C'est un $\prod_{i=1}^h F_i$-module, et on a
$$\omega_A = \bigoplus_{j=1}^n  ( \prod_{i=1}^h E_{j,1}) \omega_{A,d}$$
Le faisceau $\omega_A$ est donc isomorphe à $n$ copies de $\omega_{A,d}$. De plus, ce dernier faisceau est localement libre de rang $dg$, et muni d'une action de $\prod_{i=1}^h F_i$. On peut donc écrire $\omega_{A,d} = \oplus_{i=1}^h \omega_{A,d,i}$, où $\omega_{A,d,i}$ est un faisceau localement libre de rang $e_i f_i g$ muni d'une action de $F_i$. On peut alors décomposer ce faisceau suivant les plongements de $F_i$ dans $\overline{\mathbb{Q}}_p$ (c'est-à-dire suivant les éléments de $\Sigma_i$): $\omega_{A,d,i} = \oplus_\sigma \omega_{A,d,i}^{(\sigma)}$, où $\sigma$ parcourt $\Sigma_i$, et où $\omega_{A,d,i}^{(\sigma)}$ est un faisceau localement libre de rang $g$. \\
Ainsi, en utilisant la bijection entre $\Sigma$ et $\{1, \dots, d\}$ fixée précédemment, on peut décomposer le faisceau $\omega_{A,d}$ en
$$\omega_{A,d} =  \oplus_{j=1}^d \omega_{A,d}^{(j)}   $$
où les faisceaux $\omega_{A,d}^{(j)}$ sont localement libres de rang $g$. Cela permet donc d'écrire $\omega_A$ comme somme directe de $nd$ faisceaux localement libre de rang $g$.

\begin{defi}
On définit un morphisme $\psi : \overline{X}_{Iw} \to \widetilde{\mathcal{A}}_{ndg} \times K$ par la formule $x \to (\mathcal{P}(x), (\omega_{A,\bullet}))$, où $\mathcal{P}$ est le morphisme d'oubli de l'action de $O_B$ et de la structure Iwahorique, et où la filtration $(\omega_{A,\bullet})$ de $\omega_A$ est déduite de ce qui précède.
\end{defi}

Nous avons défini un faisceau $\omega^\kappa$ sur $\overline{X}_{Iw}$ et un faisceau $\omega_0^\kappa$ sur $\widetilde{\mathcal{A}}_{ndg}$. 

\begin{prop}
On a $\psi^* \omega_0^\kappa = \omega^\kappa$.
\end{prop}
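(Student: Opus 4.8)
Le plan est d'identifier, apr\`es tir\'e en arri\`ere par $\psi$, le torseur $\mathcal{T}$ qui d\'efinit $\omega^\kappa$ avec le produit des torseurs $\mathcal{T}_j$ qui d\'efinissent $\omega_0^\kappa$, puis de v\'erifier que les caract\`eres se correspondent. Je commencerais par d\'ecrire $\mathcal{T}$ gr\^ace \`a l'\'equivalence de Morita. Un isomorphisme $B\otimes\mathcal{O}$-lin\'eaire $St\otimes\mathcal{O} \to \omega_A$ correspond, par l'\'equivalence de Morita appliqu\'ee \`a $\prod_i \mathrm{M}_n(F_i)$, \`a un isomorphisme $(\prod_i F_i)\otimes\mathcal{O}$-lin\'eaire entre les modules r\'eduits $E\cdot(St\otimes\mathcal{O}) = \oplus_i F_i^g \otimes\mathcal{O}$ et $E\cdot\omega_A = \omega_{A,d}$. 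En d\'ecomposant suivant les plongements de chaque $F_i$ dans $\overline{\mathbb{Q}}_p$ (ce qui est licite puisque $K$ contient toutes ces images), on obtiendrait
$$\mathcal{T} \simeq \prod_{j=1}^d \mathrm{Isom}_{\mathcal{O}}(\mathcal{O}^g, \omega_{A,d}^{(j)}),$$
ce qui identifie le groupe structural $M\times K$ au produit $\prod_{j=1}^d GL_g$, de mani\`ere compatible avec le Borel $B_M$ (correspondant au produit des Borels sup\'erieurs) et avec la d\'ecomposition du caract\`ere $\kappa'=(\kappa_j')_j$.

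Ensuite, je comparerais ce torseur aux $\mathcal{T}_j$. Par construction de $\psi$, les $d$ premiers gradu\'es de la filtration $(\omega_{A,\bullet})$ sont exactement les $\omega_{A,d}^{(j)}$, c'est-\`a-dire une unique copie (au sens de Morita) de chacun des $d$ plongements, rang\'ee dans l'ordre fix\'e entre $\Sigma$ et $\{1,\dots,d\}$. On aurait donc $\psi^*\mathcal{T}_j = \mathrm{Isom}_\mathcal{O}(\omega_{A,d}^{(j)}, \mathcal{O}^g)$ pour $1\leq j\leq d$. Ceci est exactement le torseur \emph{inverse} de celui apparaissant dans la description de $\mathcal{T}$ : c'est pr\'ecis\'ement cette inversion de direction (source $St$, but $\omega_A$ pour $\mathcal{T}$ ; source $\omega_{A,i}/\omega_{A,i-1}$, but $\mathcal{O}^g$ pour $\mathcal{T}_i$) qui rend compte du changement de signe entre le caract\`ere $\kappa'$ employ\'e pour $\omega^\kappa$ et le caract\`ere $-\kappa_i'$ employ\'e pour $\omega_i^{\kappa_i}$. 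Le fait que $\omega_0^\kappa$ n'utilise que les $d$ premiers gradu\'es (et non les $nd$) refl\`ete le fait que, via Morita, le torseur $\mathcal{T}$ ne voit que le module r\'eduit $\omega_{A,d}$, c'est-\`a-dire une seule des $n$ copies.

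Pour conclure, j'utiliserais que le foncteur $\phi_*(\cdot)[\chi]$ est multiplicatif vis-\`a-vis d'un produit de torseurs \`a facteurs ind\'ependants et d'un caract\`ere qui se factorise en cons\'equence. Puisque $\kappa'$ se d\'ecompose en $(\kappa_j')$ et que l'inversion des torseurs \'echange chaque $-\kappa_j'$ avec la composante correspondante de $\kappa'$, on aboutirait \`a
$$\psi^*\omega_0^\kappa = \psi^*\Bigl(\bigotimes_{j=1}^d \omega_j^{\kappa_j}\Bigr) = \bigotimes_{j=1}^d \psi^*\omega_j^{\kappa_j} = \phi_*\mathcal{O}_\mathcal{T}[\kappa'] = \omega^\kappa.$$
La difficult\'e principale sera la mise en correspondance exacte des deux normalisations de caract\`eres — $\kappa'=-w_0\kappa$ du c\^ot\'e de $M$ et $-\kappa_i'$ avec $\kappa_i'=(k_{g+1-j})_j$ du c\^ot\'e de Siegel — en gardant trace des directions oppos\'ees des torseurs d'isomorphismes, de l'action de l'\'el\'ement le plus long $w_0$ sur chaque bloc $GL_g$, et de l'ordre choisi sur les gradu\'es de la filtration, afin de garantir que toutes les identifications de Borels et de tores co\"incident bloc par bloc.
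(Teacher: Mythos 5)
Votre d\'emonstration est correcte et suit essentiellement la m\^eme voie que celle du papier : l'\'equivalence de Morita identifie le torseur $\mathcal{T}$ au produit $\prod_{j=1}^d \mathrm{Isom}_{\mathcal{O}}(\mathcal{O}^g, \omega_{A,d}^{(j)})$, ce qui donne l'identification $\psi^*\omega_0^\kappa=\omega^\kappa$. Vous explicitez de plus la correspondance des normalisations de caract\`eres ($\kappa'=-w_0\kappa$ d'un c\^ot\'e, $-\kappa_i'$ de l'autre), l'inversion de direction des torseurs et le fait que seuls les $d$ premiers gradu\'es (une seule des $n$ copies de Morita) interviennent, autant de v\'erifications que le papier laisse implicites dans son \og Le r\'esultat en d\'ecoule \fg.
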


\begin{proof}
Le faisceau $\omega^\kappa$ est défini à l'aide du torseur $\mathcal{T} = $Isom$_{B \otimes \mathcal{O}_{\overline{X}_{Iw}}} (St \otimes \mathcal{O}_{\overline{X}_{Iw}}, \omega_A)$ sur $\overline{X}_{Iw}$, où on rappelle que $St = \oplus_{i=1}^h (F_i^n)^g$. On a donc, par l'équivalence de Morita, et la décomposition $\omega_{A,d} = \oplus_{i=1}^h \omega_{A,d,i}$,
$$\mathcal{T} = \prod_{i=1}^h \text{Isom}_{F_i \otimes \mathcal{O}_{\overline{X}_{Iw}}} (F_i^g \otimes \mathcal{O}_{\overline{X}_{Iw}}, \omega_{A,d,i}) \simeq
\prod_{j=1}^d \text{Isom}_{\mathcal{O}_{\overline{X}_{Iw}}} (\mathcal{O}_{\overline{X}_{Iw}}^g, \omega_{A,d}^{(j)}) $$
Le résultat en découle.
\end{proof}

On notera encore $\psi : \overline{X}_{Iw}^{an} \to \widetilde{\mathcal{A}}_{ndg}^{rig}$ le morphisme obtenu au niveau des espaces analytiques.
On peut donc définir une semi-norme sur l'espace $H^0(\mathcal{U},\omega^\kappa)$, pour tout ouvert $\mathcal{U}$ quasi-compact de $\overline{X}_{Iw}^{an}$. Soit $f \in H^0(\mathcal{U},\omega^\kappa)$, et $x \in \mathcal{U}$. Si on note $L$ le corps résiduel de $x$, on a donc un morphisme $x : $ Spec~$L \to \mathcal{U}$. Alors
$$x^*f \in H^0( \text{Spec } L,x^* \omega^\kappa) = H^0 (\text{Spec } L, x^* \psi^* \omega_0^\kappa) = H^0 (\text{Spec } L, (\psi x)^* \omega_0^\kappa) $$ 
Le morphisme $\psi x : $ Spec $L \to \widetilde{\mathcal{A}}_{ndg}^{rig}$ donne un $L$-point de $\widetilde{\mathcal{A}}_{ndg}^{rig}$, qui provient d'un unique $O_L$-point du schéma formel $\widetilde{\mathcal{A}}_{ndg}^{for}$ associé à $\widetilde{\mathcal{A}}_{ndg}$. On note $\psi_0$ le morphisme Spf $O_L \to \widetilde{\mathcal{A}}_{ndg}^{for}$ correspondant. On a alors
$$H^0 (\text{Spec } L, (\psi x)^* \omega_0^\kappa) = H^0 (\text{Spf } O_L, \psi_0^* \omega_0^\kappa) \otimes_{O_L} L$$
où $O_L$ est l'anneau des entiers de $L$ (on note encore $\omega_0^\kappa$ le faisceau induit sur $\widetilde{\mathcal{A}}_{ndg}^{for}$). On définit donc une norme sur $H^0( $Spec $L,x^* \omega^\kappa)$ en identifiant $H^0( $Spf $O_L, \psi_0^* \omega_0^\kappa)$ et les éléments de norme plus petite que $1$.

\begin{defi}
Soit $\mathcal{U}$ un ouvert de $\overline{X}_{Iw}^{an}$, $f \in H^0(\mathcal{U},\omega^\kappa)$, et $x \in \mathcal{U}$. On définit la norme de $f$ en $x$ par $|f(x)| := |x^* f|$, et la norme de $f$ sur $\mathcal{U}$ par $|f|_\mathcal{U} := \sup_{x \in \mathcal{U}} |f(x)|$.
\end{defi}

\begin{rema}
L'élément $|f|_\mathcal{U}$ peut éventuellement être infini, mais est fini si $\mathcal{U}$ est quasi-compact. Dans ce cas, cette définition donne en général une semi-norme, mais est une norme si l'espace est réduit.
\end{rema}

\begin{defi}
On notera encore $\widetilde{\omega}^\kappa$ le sous-faisceau des fonctions de norme plus petite que $1$.
\end{defi}

Donnons une autre définition du faisceau $\widetilde{\omega}^\kappa$. Pour tout espace rigide $Y$, on note $\widetilde{\mathcal{O}_Y}$ le faisceau des fonctions de norme inférieure ou égale à $1$. Alors
$$\widetilde{\omega}^\kappa = \psi^{-1} \widetilde{\omega}_0^\kappa \otimes_{\psi^{-1} \widetilde{\mathcal{O}_{\widetilde{\mathcal{A}}_{ndg}^{rig}}}} \widetilde{\mathcal{O}_{\overline{X}_{Iw}^{an}}}$$
Nous dirons que $\widetilde{\omega}^\kappa$ définit une structure entière pour $\omega^\kappa$. Si $\mathcal{F}$ est un faisceau localement libre de rang $r$ sur un espace rigide $Y$, on appelle structure entière pour $\mathcal{F}$ un sous-faisceau $\widetilde{\mathcal{F}}$, tel que $\widetilde{\mathcal{F}}$ soit localement isomorphe à $\widetilde{\mathcal{O}_Y}^r$. \\

Rappelons un \og gluing lemma \fg $ $ dû à Kassaei (\cite{Ka}). On rappelle que nous avons fait les choix combinatoires de telle sorte que l'espace $\overline{X}_{Iw}$ est lisse.

\begin{lemm} \label{glue}
Soit $\mathcal{U}$ un ouvert quasi-compact de $\overline{X}_{Iw}^{an}$. On a :
$$H^0(\mathcal{U},\omega^\kappa) \simeq H^0(\mathcal{U},\widetilde{\omega}^\kappa) \otimes_{\mathbb{Z}_p} \mathbb{Q}_p \simeq \left( \underset{\leftarrow}{\lim} \text{ } H^0(\mathcal{U},\widetilde{\omega}^\kappa / p^n) \right) \otimes_{\mathbb{Z}_p} \mathbb{Q}_p$$
\end{lemm}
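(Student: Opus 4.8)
The plan is to prove the two isomorphisms separately: the first is essentially formal, while the second is the genuine content of Kassaei's lemma. Throughout I will use that, by our choice of a smooth toroidal compactification, $\overline{X}_{Iw}$ is smooth and hence $\overline{X}_{Iw}^{an}$ is reduced; as noted in the remark after the definition of the norm, this guarantees that $|\cdot|_{\mathcal{U}}$ is a genuine norm, so that $H^0(\mathcal{U},\widetilde{\omega}^\kappa)$ is exactly the set of $f\in H^0(\mathcal{U},\omega^\kappa)$ with $|f|_{\mathcal{U}}\le 1$, the unit ball of the $K$-Banach module $H^0(\mathcal{U},\omega^\kappa)$.

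First I would establish $H^0(\mathcal{U},\omega^\kappa)\simeq H^0(\mathcal{U},\widetilde{\omega}^\kappa)\otimes_{\mathbb{Z}_p}\mathbb{Q}_p$. Injectivity is immediate: $\widetilde{\omega}^\kappa\hookrightarrow\omega^\kappa$ is a subsheaf, $H^0(\mathcal{U},-)$ is left exact, and $\mathbb{Q}_p$ is flat over $\mathbb{Z}_p$, so $H^0(\mathcal{U},\widetilde{\omega}^\kappa)\otimes\mathbb{Q}_p\to H^0(\mathcal{U},\omega^\kappa)$ is injective. For surjectivity I would invoke the quasi-compactness of $\mathcal{U}$: any $f\in H^0(\mathcal{U},\omega^\kappa)$ then has finite norm $|f|_{\mathcal{U}}$, so choosing $n$ with $|p^n|\,|f|_{\mathcal{U}}\le 1$ gives $p^nf\in H^0(\mathcal{U},\widetilde{\omega}^\kappa)$, and $f$ lies in the image.

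For the second isomorphism I would first note that $B:=H^0(\mathcal{U},\widetilde{\omega}^\kappa)$, being the unit ball of a $K$-Banach module over a reduced space, is $p$-adically separated and complete, so $B=\varprojlim_n B/p^nB$. Applying $H^0(\mathcal{U},-)$ to $0\to\widetilde{\omega}^\kappa\xrightarrow{p^n}\widetilde{\omega}^\kappa\to\widetilde{\omega}^\kappa/p^n\to 0$ yields short exact sequences
\begin{equation*}
0\to B/p^nB\to H^0(\mathcal{U},\widetilde{\omega}^\kappa/p^n)\to H^1(\mathcal{U},\widetilde{\omega}^\kappa)[p^n]\to 0,
\end{equation*}
compatible in $n$, the transition maps on the right term being multiplication by $p$ (compare the two sequences via the square with vertical maps $p$ and $\mathrm{id}$). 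Since the system $B/p^nB$ has surjective transitions its $\varprojlim^1$ vanishes, and passing to the inverse limit gives
\begin{equation*}
0\to B\to C\to T_pH^1(\mathcal{U},\widetilde{\omega}^\kappa)\to 0,\qquad C:=\varprojlim_n H^0(\mathcal{U},\widetilde{\omega}^\kappa/p^n).
\end{equation*}
Thus $C$ is an extension of the $p$-adic Tate module $T_pH^1(\mathcal{U},\widetilde{\omega}^\kappa)$ by $B$, and the proof reduces to showing that this Tate module vanishes.

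The hard part will be controlling $H^1(\mathcal{U},\widetilde{\omega}^\kappa)$. Note that I do \emph{not} need $H^1$ to vanish, nor even to be torsion (indeed $H^1(\mathcal{U},\omega^\kappa)$ may be nonzero for a general quasi-compact $\mathcal{U}$): I only need $T_pH^1(\mathcal{U},\widetilde{\omega}^\kappa)=0$, i.e.\ that the $p$-divisible part of the torsion of $H^1(\mathcal{U},\widetilde{\omega}^\kappa)$ is trivial, which is exactly what survives rationally. To see this I would cover $\mathcal{U}$ by finitely many affinoids trivialising $\widetilde{\omega}^\kappa\simeq\widetilde{\mathcal{O}}^{r}$ and compute $H^1$ by the associated finite \v{C}ech complex, whose terms are unit balls of affinoid algebras. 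The absence of divisible torsion in the resulting cohomology is precisely the content of Kassaei's gluing lemma (\cite{Ka}), whose hypotheses hold here because $\widetilde{\omega}^\kappa$ is by construction the pullback along $\psi$ of $\widetilde{\omega}_0^\kappa$, the integral structure attached to the coherent locally free sheaf $\omega_0^\kappa$ on the \emph{proper} $\mathbb{Z}_p$-scheme $\widetilde{\mathcal{A}}_{ndg}$; formal GAGA there makes the relevant integral cohomology finitely generated over $O_K$, and finite generation forbids divisible torsion. The delicate point — and the reason the statement is only asserted after $\otimes\mathbb{Q}_p$ — is that $\widetilde{\mathcal{O}}_{\overline{X}_{Iw}^{an}}$ is the maximal integral structure rather than the structure sheaf of a finite-type formal model of $\overline{X}_{Iw}^{an}$ (which does not exist in our setting), so the finiteness must be transported through the non-finite morphism $\psi$ and holds only up to bounded $p$-torsion. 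Granting this, $T_pH^1(\mathcal{U},\widetilde{\omega}^\kappa)=0$, whence $B\otimes\mathbb{Q}_p\xrightarrow{\sim}C\otimes\mathbb{Q}_p$, which together with the first isomorphism completes the proof.
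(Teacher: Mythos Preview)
The paper does not give a proof of this lemma at all: it is stated as a recall of Kassaei's result and simply cites \cite{Ka}. So there is nothing to compare your argument to on the paper's side beyond checking that Kassaei's hypotheses are met.

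Your reduction is sound. The first isomorphism is indeed formal from quasi-compactness and reducedness, and the long exact sequence cleanly reduces the second isomorphism to the vanishing of $T_p H^1(\mathcal{U},\widetilde{\omega}^\kappa)$. This is a good way to phrase the content of the gluing lemma.

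Where your argument goes wrong is in the justification of $T_p H^1(\mathcal{U},\widetilde{\omega}^\kappa)=0$. You try to get this from formal GAGA on the proper $\mathbb{Z}_p$-scheme $\widetilde{\mathcal{A}}_{ndg}$, arguing that ``the relevant integral cohomology'' is finitely generated over $O_K$. But formal GAGA yields finiteness of cohomology of coherent sheaves over the \emph{entire} proper formal scheme, not over an arbitrary quasi-compact admissible open $\mathcal{U}\subset\overline{X}_{Iw}^{an}$; already $H^0$ of a single affinoid is an entire affinoid algebra, not a finite $O_K$-module. Moreover, as you yourself note, $\psi$ is not finite, so there is no mechanism to transport any finiteness from the Siegel side to $\mathcal{U}$. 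The sentence ``holds only up to bounded $p$-torsion'' is exactly the statement you need to prove, not a consequence of what precedes it.

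The actual input to Kassaei's lemma is much more modest and entirely local: one only needs that $\mathcal{U}$ is quasi-compact and reduced, and that $\widetilde{\omega}^\kappa$ is locally isomorphic to $\widetilde{\mathcal{O}}^r$ (which is how the paper defines an integral structure). Covering $\mathcal{U}$ by finitely many affinoids $V_i=\mathrm{Sp}\,A_i$ trivialising $\widetilde{\omega}^\kappa$, the \v{C}ech complex has terms that are finite products of $(A_i^\circ)^r$, and the argument runs on completeness and $p$-torsion-freeness of the unit balls $A_i^\circ$ of reduced affinoid algebras, together with the Banach open mapping theorem to bound the $p$-torsion in the cokernel. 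No properness, no formal GAGA, and no use of $\psi$ beyond the fact that it furnishes the integral structure. Once you drop the formal GAGA detour and invoke \cite{Ka} on these grounds, your write-up becomes correct and matches the paper's citation.
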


\section{Décomposition des opérateurs de Hecke}

\subsection{Décomposition} \label{decompo}

Soit $\mathcal{U}$ un ouvert quasi-compact de $X_{Iw}^{an}$. Fixons un élément $i$ compris entre $1$ et $g$, et un élément rationnel $r \in [0,f_ig]$. On note $X_{i,\leq r} := \{ x \in X_{Iw}^{an}, Deg_i (x) \leq r \}$. Nous voulons découper notre ouvert $\mathcal{U}$ suivant le nombre de points de $U_{\pi_i}(x) \cap X_{i,\leq r}$. Pour tout $x=(A,\lambda,\iota,\eta,H_{i,j}) \in X_{Iw}^{an}$, soit $N(x,r)$ le nombre de points de $U_{\pi_i}(x) \cap X_{i,\leq r}$. Définissons
$$\mathcal{U}_j := \{ x \in \mathcal{U}, N(x,r) \geq j \}$$

\begin{prop}
Les $(\mathcal{U}_j)$ forment une suite décroissante d'ouverts quasi-compacts, vide à partir d'un certain rang.
\end{prop}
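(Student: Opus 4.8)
Le plan est de séparer les propriétés élémentaires (décroissance, finitude du rang) de la propriété topologique (ouverts quasi-compacts), qui constitue le cœur de la preuve. Posons $D := \deg p_1$, le degré constant du morphisme fini étale $p_1 : C_i^{an} \to X_{Iw}^{an}$ ; il est égal au nombre de suppléments totalement isotropes $L$ de $H_{i,g}$ dans $A[\pi_i]$. La décroissance $\mathcal{U}_{j+1} \subseteq \mathcal{U}_j$ est immédiate, puisque $N(x,r) \geq j+1$ entraîne $N(x,r) \geq j$. Pour la finitude, on remarque que $N(x,r) \leq \#\, U_{\pi_i}(x) \leq \# p_1^{-1}(x) = D$ pour tout $x$, de sorte que $\mathcal{U}_j = \emptyset$ dès que $j > D$.

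Il reste à établir que chaque $\mathcal{U}_j$ est un ouvert quasi-compact. D'abord, la fonction $c \mapsto Deg_i(p_2(c))$ sur $C_i^{an}$ s'interprète, exactement comme dans les constructions qui précèdent (on tire en arrière par le morphisme $C_i \to \mathcal{A}_{ndg,Iw_i}$ envoyant $c$ sur $A/L$ muni de l'image de $H_{i,g}$), comme la valuation de la norme d'une section analytique $\delta_{H_{i,g}'}$ d'un faisceau inversible. Comme $r$ est rationnel, le lieu
\[
Z := \{\, c \in p_1^{-1}(\mathcal{U}) : Deg_i(p_2(c)) \leq r \,\} = \{\, c \in p_1^{-1}(\mathcal{U}) : |\delta_{H_{i,g}'}(c)| \geq p^{-nr} \,\}
\]
est, sur un recouvrement affinoïde fini de $p_1^{-1}(\mathcal{U})$ trivialisant le faisceau, une réunion finie de domaines de Laurent ; c'est donc un ouvert admissible quasi-compact de $C_i^{an}$ (on utilise que $p_1^{-1}(\mathcal{U})$ est quasi-compact, $p_1$ étant fini). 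Le caractère large de l'inégalité $\leq r$ est ici crucial : il fournit une condition fermée en norme, seule garante de la quasi-compacité.

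On compte alors les points par produits fibrés. Pour $j \geq 1$, soit $C^{[j]} := C_i^{an} \times_{\mathcal{U}} \dots \times_{\mathcal{U}} C_i^{an}$ ($j$ facteurs, au-dessus de $\mathcal{U}$ via $p_1$) ; la projection $\pi : C^{[j]} \to \mathcal{U}$ est finie étale, donc ouverte et préservant la quasi-compacité. La diagonale d'un morphisme fini étale étant une immersion à la fois ouverte et fermée, la grande diagonale $\Delta \subseteq C^{[j]}$ (réunion des lieux $\{c_l = c_{l'}\}$) est ouverte et fermée, et $C^{[j]} \setminus \Delta$ est un ouvert-fermé fini étale sur $\mathcal{U}$. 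Le lieu $Z^{[j]} := (Z \times_\mathcal{U} \dots \times_\mathcal{U} Z) \cap (C^{[j]} \setminus \Delta)$ des $j$-uplets de points deux à deux distincts, tous contenus dans $Z$, est donc un ouvert quasi-compact, et son image $\pi(Z^{[j]})$ est exactement le lieu où $\# (p_1^{-1}(x) \cap Z) \geq j$. Grâce à la rigidité apportée par la structure de niveau $N \geq 3$, le morphisme $(p_1,p_2)$ est injectif, donc $p_2$ l'est sur chaque fibre de $p_1$, et ce cardinal de fibre coïncide avec le nombre $N(x,r)$ de points distincts de l'image : on obtient ainsi $\mathcal{U}_j = \pi(Z^{[j]})$, ouvert quasi-compact.

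La difficulté principale me semble résider dans la conjonction, pour le lieu de comptage, de l'ouverture et de la quasi-compacité : une condition de seuil $N(x,r) \geq j$ se traduit par l'existence de $j$ feuilles distinctes dans $Z$, condition a priori ouverte mais non quasi-compacte à cause de l'exigence de distinction. C'est précisément le caractère ouvert-et-fermé des diagonales des morphismes finis étales $p_1$ et $p_2$ qui permet de ramener cette exigence à une condition localement constante sur les produits fibrés, et donc de concilier ouverture et quasi-compacité ; le second point délicat étant l'interprétation du degré comme valuation d'une section analytique, sans laquelle le lieu $Z$ lui-même ne serait pas quasi-compact.
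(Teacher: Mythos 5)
Your construction is, in substance, the right one --- and it is worth noting that the paper itself gives no argument here (it simply cites [Bi, Lemme 2.7]), so your proposal is supplying a proof where the paper defers to an earlier reference. The monotonicity and the vanishing of $\mathcal{U}_j$ for $j > \deg p_1$ are fine; the interpretation of $\mathrm{Deg}_i \circ p_2$ as the valuation of a section $\delta_{H_{i,g}'}$ of a normed invertible sheaf pulled back from the Siegel side (so that $Z$ is a finite union of Laurent-type affinoid subdomains, hence a quasi-compact admissible open) is exactly the technique the paper uses elsewhere for the loci $\overline{X}_{Iw,I}$; and the counting via the $j$-fold fibered product of $C_i^{an}$ over $\mathcal{U}$, with the big diagonal removed (open and closed, since $p_1$ is finite \'etale and separated) and then pushed forward by the finite \'etale projection, correctly produces a quasi-compact open.

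The gap is in the very last identification. What $\pi(Z^{[j]})$ computes is the locus where at least $j$ points $c$ of the \emph{fibre} $p_1^{-1}(x)$ --- i.e.\ at least $j$ suppl\'ementaires $L$ --- satisfy $\mathrm{Deg}_i(p_2(c)) \le r$; to equate this with $\mathcal{U}_j$, defined by counting points of the \emph{image} $U_{\pi_i}(x) \cap X_{i,\le r}$, you claim that $(p_1,p_2)$ is injective ``thanks to the rigidity given by the level-$N$ structure''. That justification is not valid: rigidity (Serre's lemma) kills automorphisms of a single object, but it in no way prevents two distinct supplements $L \ne L'$ of $H_{i,g}$ from having quotients $(A/L,\dots)$ and $(A/L',\dots)$ that are isomorphic compatibly with all the structures; this genuinely happens at special (CM-type) points, so $(p_1,p_2)$ is not a monomorphism and $p_2$ need not be injective on the fibres of $p_1$. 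The correct resolution is not to prove injectivity but to read $N(x,r)$ as counting the points of the correspondence (the $L$'s whose quotient lands in $X_{i,\le r}$) rather than the points of the set-theoretic image: this is the reading the rest of the section requires anyway, since the decomposition $U_{\pi_i} = U_{\pi_i}^{good} \coprod U_{\pi_i}^{bad}$ and the trace defining the operator are indexed by the supplements $L$ (the paper says explicitly that $U_{\pi_i}^{bad}$ ``param\`etre les suppl\'ementaires $L$''), and with it your proof is complete once the injectivity sentence is deleted. Under the literal image-counting reading, by contrast, your argument breaks exactly at points where two sheets of the correspondence collide, and the quasi-compactness assertion itself becomes doubtful there (near such a point the image count can jump up off a proper closed subset, and removing a closed analytic subset destroys quasi-compactness).
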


\begin{proof}
Voir \cite{Bi} lemme $4.3.6$.
\end{proof}

Sur $\mathcal{U}_j \backslash \mathcal{U}_{j+1}$, on a $N(x,r)=j$. On peut alors décomposer l'opérateur $U_{\pi_i}$ en $U_{\pi_i}^{good} \coprod U_{\pi_i}^{bad}$, où $U_{\pi_i}^{bad}$ correspond aux $j$ points de $X_{i,\leq r}$, et $U_{\pi_i}^{good}$ aux autres. Remarquons que $U_{\pi_i}^{bad}$ paramètre les supplémentaires $L$ de $H_i$ avec deg $L \geq f_i -r$. De plus, il est possible de faire surconverger ces ouverts.

\begin{prop}
Soit $r'>r$ un nombre rationnel, et $\mathcal{U}_j' := \{ x \in \mathcal{U}, N(x,r') \geq~j \}$. Alors $\mathcal{U}_j'$ est un voisinage strict de $\mathcal{U}_j$ dans $\mathcal{U}$, c'est-à-dire que le recouvrement $(\mathcal{U}_j',\mathcal{U} \backslash \mathcal{U}_j)$ de $\mathcal{U}$ est admissible.
\end{prop}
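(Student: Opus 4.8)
The plan is to read the counting function $N(\cdot,r)$ off the finite étale correspondence $p_1,p_2:C_i^{an}\to X_{Iw}^{an}$ and to reduce admissibility to a relative-compactness statement that is transparent on the source of $p_1$. Write $\mathcal{W}:=p_1^{-1}(\mathcal{U})$, a quasi-compact open of $C_i^{an}$, and set $s:=\mathrm{Deg}_i\circ p_2:C_i^{an}\to[0,f_ig]$. By the analytic description of the degree function established above (the section $\delta$ of a normed line bundle with $|\delta(z)|=p^{-n\,s(z)}$), the function $s$ is continuous and, in a local trivialisation, each locus $\{s\le r\}$, $\{s\le r'\}$ is a quasi-compact Laurent domain while $\{s>r\}$ is an admissible open. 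First I would record the elementary inclusions: since $r<r'$ one has $X_{i,\le r}\subset X_{i,\le r'}$, hence $N(x,r)\le N(x,r')$ and thus $\mathcal{U}_j\subset\mathcal{U}_j'$; and if $x\notin\mathcal{U}_j'$ then $N(x,r)\le N(x,r')<j$, so $x\in\mathcal{U}\setminus\mathcal{U}_j$. Therefore $(\mathcal{U}_j',\mathcal{U}\setminus\mathcal{U}_j)$ is a set-theoretic covering of $\mathcal{U}$ by admissible opens (the first being quasi-compact by the previous proposition), and only admissibility remains to be shown.

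Next I would exhibit the relevant admissible covering upstairs. Because $p^{-nr}>p^{-nr'}$, the domain $\mathcal{W}_{\le r}=\{|\delta|\ge p^{-nr}\}$ is relatively compact in $\mathcal{W}_{\le r'}=\{|\delta|\ge p^{-nr'}\}$, that is $\mathcal{W}_{\le r}\Subset\mathcal{W}_{\le r'}$. Moreover $\{s\le r'\}$ and $\{s>r\}$ cover $\mathcal{W}$: no point can satisfy both $|\delta|<p^{-nr'}$ and $|\delta|\ge p^{-nr}$, since $p^{-nr'}<p^{-nr}$; and this covering is admissible, being a Laurent covering adapted to the analytic function $\delta$ (one writes $\{s>r\}=\bigcup_{r''>r}\{|\delta|\le p^{-nr''}\}$). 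This is exactly the place where the strict inequality $r<r'$ is used: it provides the buffer band $r<s\le r'$ that separates the two pieces.

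Finally I would descend along the finite (hence proper and quasi-compact) étale map $p_1$. The fibre-counting identities $\mathcal{U}_j=\{x:\#(p_1^{-1}(x)\cap\mathcal{W}_{\le r})\ge j\}$ and $\mathcal{U}_j'=\{x:\#(p_1^{-1}(x)\cap\mathcal{W}_{\le r'})\ge j\}$, combined with $\mathcal{W}_{\le r}\Subset\mathcal{W}_{\le r'}$ and the finiteness of $p_1$, should yield $\mathcal{U}_j\Subset\mathcal{U}_j'$; and relative compactness of one quasi-compact open inside another is precisely the assertion that $(\mathcal{U}_j',\mathcal{U}\setminus\mathcal{U}_j)$ is admissible. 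I expect the main obstacle to be exactly this descent: one must control the sheets of $p_1$ whose value of $s$ falls in the transitional band $(r,r']$, so as to guarantee that a point of $\mathcal{U}_j$ retains at least $j$ sheets with $s<r'$ over a whole quasi-compact neighbourhood. I would handle this by pushing the admissible covering $\{\{s\le r'\},\{s>r\}\}$ of $\mathcal{W}$ forward through $p_1$ and invoking quasi-compactness together with the maximum principle, in the spirit of the dynamical arguments of \cite{Bi} and \cite{Ka}; alternatively the conclusion follows from \cite{Bi} lemme $2.7$ applied verbatim, once $s$ has been identified with the valuation of an analytic function.
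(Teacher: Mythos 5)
Your preliminaries are correct and match what the paper needs: the elementary inclusions, and the identification of $s=\mathrm{Deg}_i\circ p_2$ with the valuation of an analytic section $\delta$ of a normed line bundle pulled back from the integral Siegel model (this is exactly the ingredient, established in the quasi-compactness proposition, that makes the Iwahori-level arguments available on $X_{Iw}^{an}$). But the step you yourself flag as the main obstacle -- the descent along $p_1$ -- is a genuine gap, and neither of your proposed mechanisms closes it. \emph{Pushing the admissible covering forward through $p_1$} is not a valid operation here: for $j\geq 2$ the locus $\mathcal{U}_j$ is not the image of any piece of your covering of $\mathcal{W}$, it is a higher-multiplicity counting locus, and there is no general principle converting an admissible covering upstairs into admissibility of $(\mathcal{U}_j',\mathcal{U}\backslash\mathcal{U}_j)$ via fibre counting; the maximum principle is likewise irrelevant to this statement (it is what the paper uses for the $\varepsilon$-increase of degree, not for admissibility). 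As for the claim that $\mathcal{W}_{\leq r}\Subset\mathcal{W}_{\leq r'}$ together with finiteness of $p_1$ \emph{should yield} $\mathcal{U}_j\Subset\mathcal{U}_j'$: controlling the sheets whose degree falls in the band $(r,r']$ \emph{is} the proposition, so this is circular at the decisive point.

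The mechanism that actually closes the gap -- and it is the content of \cite{Bi}, proposition $2.10$, which is what the paper cites for this statement (your reference to lemme $2.7$ points at the quasi-compactness assertion, a different result) -- is to push forward the \emph{function}, not the covering. Since $p_1$ is finite \'etale, $p_{1*}\mathcal{O}$ is locally free of some rank $m$; over a finite affinoid covering $\{W_\alpha\}$ of $\mathcal{U}$, chosen so that the normed bundle is trivialized on $p_1^{-1}(W_\alpha)$ by a norm-one section (possible because the norm comes from the integral Siegel model), write $\delta = f_\alpha\cdot e_\alpha$ and form the characteristic polynomial $P_\alpha(T)$ of multiplication by $f_\alpha$ on $p_{1*}\mathcal{O}$. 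Its roots at a point $x$ are the values $f_\alpha(y)$ for $y\in p_1^{-1}(x)$, so $N(x,t)\geq j$ becomes the Newton-polygon condition \og at least $j$ roots of valuation $\leq nt$ \fg, which, $P_\alpha$ being monic, is a finite union of \emph{rational} subdomains of $W_\alpha$ (non-strict inequalities among absolute values of the coefficients). This makes $\mathcal{U}_j\cap W_\alpha$ and $\mathcal{U}_j'\cap W_\alpha$ finite unions of affinoid subdomains, and the pair consisting of $\mathcal{U}_j'\cap W_\alpha$ and the locus \og at least $m-j+1$ roots of valuation $\geq nr'$ \fg{} is a covering of $W_\alpha$ by finite unions of rational subdomains; the second piece is disjoint from $\mathcal{U}_j$ precisely because $r<r'$, so this covering refines $(\mathcal{U}_j'\cap W_\alpha, W_\alpha\backslash\mathcal{U}_j)$, which is therefore admissible, and one glues over the $W_\alpha$. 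Your closing instinct -- that once $s$ is the valuation of an analytic function the statement follows from \cite{Bi} verbatim -- is sound, and that citation \emph{is} the paper's entire proof; but as a self-contained argument your proposal leaves the key counting-to-analytic conversion unproved, and the fallback citation targets the wrong result.
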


\begin{proof}
Voir \cite{Bi} proposition $4.3.10$.
\end{proof}

Pour $r'>r$, on dispose donc de la décomposition de $U_{\pi_i}$ sur $\mathcal{U}_j \backslash \mathcal{U}_{j+1}$, ainsi que sur $\mathcal{U}_j' \backslash \mathcal{U}_{j+1}'$. Ces décompositions coïncident sur l'intersection des deux ensembles. \\
Il est possible de généraliser cette décomposition à $U_{\pi_i}^N$ pour tout entier $N$.

\begin{theo} \label{bigtheo}
Soit $N \geq 1$ et $r \in [0,f_ig]$ un rationnel. Il existe un ensemble fini totalement ordonné $S_N$ et une suite décroissante d'ouverts quasi-compacts $(\mathcal{U}_j (N))_{i \in S_N}$ de $\mathcal{U}$ de longueur $L=L(N)$ indépendante de $\mathcal{U}$, tels que pour tout $j\geq 0$, on peut décomposer la correspondance $U_{\pi_i}^N$ sur $\mathcal{U}_{j}(N) \backslash \mathcal{U}_{j+1}(N)$ en 
$$ U_{\pi_i}^N = \left ( \coprod_{k=0}^{N-1} U_{\pi_i}^{N-1-k} \circ T_k  \right ) \coprod T_N$$
avec $T_0 = U_{\pi_i,j,N}^{good}$, pour $0 < k < N$
$$T_k = \coprod_{j_1 \in S_{N-1}, \dots, j_k \in S_{N-k}}  U_{\pi_i,j_k,N}^{good} U_{\pi_i,j_{k-1},j_k,N}^{bad} \dots U_{\pi_i,j,j_1,N}^{bad}$$
et
$$T_N = \coprod_{j_1 \in S_{N-1}, \dots, j_{N-1} \in S_1} U_{\pi_i,j_{N-1},N}^{bad} U_{\pi_i,j_{N-2},j_{N-1},N}^{bad} \dots U_{\pi_i,j,j_1,N}^{bad}$$  
avec
\begin{itemize}
\item les images des opérateurs $U_{\pi_i,j,N}^{good}$ ($j \in S_k$) sont incluses dans $X_{i,> r}=\{ x \in~X_{rig}, Deg_i (x) >~r \}$.
\item les opérateurs $U_{\pi_i,j,l,N}^{bad}$ ($j \in S_k$, $l \in S_{k-1})$ et $U_{\pi_i,j,N}^{bad}$ ($j \in S_1$) sont incluses dans $X_{i,\leq r}$.
\end{itemize}
Enfin, si $(\mathcal{U}_j'(N))$ est la suite d'ouverts de $\mathcal{U}$ obtenue pour $r'>r$, alors $\mathcal{U}_j'(N)$ est un voisinage strict de $\mathcal{U}_j(N)$ dans $\mathcal{U}$ pour tout $j$.
\end{theo}

\begin{proof}
C'est le théorème $4.4.1$ de \cite{Bi}.
\end{proof}

\subsection{Norme des opérateurs de Hecke}

Pour démontrer le théorème de classicité, nous aurons besoin d'un calcul de normes de ces opérateurs de Hecke. Rappelons que la norme d'un opérateur $T : H^0(T(\mathcal{U}),\mathcal{F}) \to H^0(\mathcal{U},\mathcal{F})$ est défini par
$$ \left\| T \right\|_{\mathcal{U}} := \inf \left\{ \lambda \in \mathbb{R}_{>0}, \text{ } |Tf|_\mathcal{U} \leq \lambda |f|_{T(\mathcal{U})} \text{ } \forall f \in H^0(T(\mathcal{U}),\mathcal{F}) \right\}$$

\begin{prop} \label{lemnorm}
Soit $T$ un opérateur défini sur un ouvert $\mathcal{U}$, égal à $U_{\pi_i}$, $U_{\pi_i}^{good}$ ou $U_{\pi_i}^{bad}$. On suppose que l'image de cet opérateur est incluse dans $X_{i,\leq f_i g - c}$ pour un certain $c \geq 0$. Alors
$$ \Vert T \Vert_\mathcal{U} \leq p^{f_i g(g+1)/2-c \inf_{\tau \in \Sigma_i} k_{g,\tau}}$$
\end{prop}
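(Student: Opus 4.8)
The plan is to reduce the norm estimate to a computation on the integral structures defined via the morphism $\psi : \overline{X}_{Iw}^{an} \to \widetilde{\mathcal{A}}_{ndg}^{rig}$, then to trace through the three steps defining $U_{\pi_i}$ (pullback, the map $p^*(\kappa)$, and the normalized trace) keeping track of how each distorts the integral lattice $\widetilde{\omega}^\kappa$. The normalization factor $\frac{1}{p^{n_i}}$ with $n_i = f_i g(g+1)/2$ should exactly account for the $p^{f_ig(g+1)/2}$ term, and the remaining factor $p^{-c\inf_\tau k_{g,\tau}}$ should come from the fact that when $\deg H_{i,g}$ is small (i.e. $\mathrm{Deg}_i$ is far from maximal by at least $c$), the isogeny $p : A \to A/L$ contracts the Hodge bundle by a controlled amount.

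First I would work locally. Over a quasi-compact affinoid $\mathcal{U}$ on which $\omega^\kappa$ is trivialized, a section $f$ has $|f|_\mathcal{U} = \sup_x |f(x)|$, where the pointwise norm is read off from the integral model $\widetilde{\omega}_0^\kappa$ on $\widetilde{\mathcal{A}}_{ndg}^{for}$ pulled back by $\psi$. Thus it suffices to bound, at each point $x$ with $y \in U_{\pi_i}(x) \cap X_{i,\le f_ig-c}$, how the composite $\widetilde{U}_{\pi_i}$ acts on the integral lattices. The key object is the map $p^*(\kappa) : p_2^*\omega^\kappa \to p_1^*\omega^\kappa$ induced by $p^* : \omega_{A/L} \to \omega_A$. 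I would analyze this on the graded pieces $\omega_{A,d}^{(j)}$ of the filtration, indexed by $j \leftrightarrow \tau \in \Sigma_i$, since the weight $\kappa$ acts through the characters $k_{\ell,\tau}$ on these rank-$g$ pieces.

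The heart of the matter is the local computation of $p^*$ on each graded piece. As in the proof of Proposition \ref{augmente}, I would pass to the semi-abelian models over $O_L$ and to the Barsotti--Tate group $G_i$ with its $O_{F_i}$-action, reducing to $n=1$ by Morita. The degree of $\widetilde{H}_{i,g}$ being at most $f_ig - c$ controls, via the theory of $\cite{Fa}$, the valuation of the determinant of $p^*$ on $\omega$: the cokernel of $p^* : \omega_{A/L} \to \omega_A$ has length governed by $\deg L = f_ig - \deg H_{i,g} \geq c$. Distributing this defect across the embeddings $\tau \in \Sigma_i$, the worst contraction on the most dominant weight line is bounded below by $c \inf_{\tau \in \Sigma_i} k_{g,\tau}$, because the lowest weight $k_{g,\tau}$ minimizes the gain from the positive-weight directions. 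Combined with the trace $\mathrm{Tr}_{p_1}$ being integral (it preserves $\widetilde{\mathcal{O}}$) and the normalization $p^{-n_i}$, this yields $\Vert T\Vert_\mathcal{U} \le p^{f_ig(g+1)/2 - c\inf_\tau k_{g,\tau}}$.

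The main obstacle will be the bookkeeping in this last step: making precise how the single inequality $\deg L \ge c$ forces the claimed valuation bound on $p^*(\kappa)$ simultaneously across all graded pieces and all dominant weight components, and checking that the infimum $\inf_{\tau\in\Sigma_i} k_{g,\tau}$ (rather than a sum or a larger weight) is the correct sharp constant. This requires knowing that the Hodge height defect can be realized on the $g$-th (lowest dominant) weight line, which in turn rests on the compatibility of the degree function with the Morita-normalized partial degrees developed in the appendix. The contributions for $j \ne i$ are harmless since Proposition \ref{augmente} shows those degrees and the corresponding graded pieces are preserved by the isogeny.
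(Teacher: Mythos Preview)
Your sketch follows essentially the same route as the paper: reduce to a pointwise bound on $p^*(\kappa)$, pass via $\psi$ to semi-abelian integral models over $\overline{\mathbb{Z}_p}$, apply Morita, split $\omega$ along the filtration indexed by $\Sigma$, and observe that on the $\sigma_j$-graded piece the map $f_j$ has $v(\det f_j)=\lambda_j$ with $\sum_{\sigma_j\in\Sigma_i}\lambda_j=\deg\widetilde{L}\ge c$. The paper is terser (it bounds only $\Vert p^*(\kappa)\Vert$ and leaves the integrality of the trace and the normalization $p^{-n_i}$ implicit), but the substance is identical.

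Two small points to tighten. First, when you write ``$\deg L = f_i g - \deg H_{i,g} \ge c$'' you must mean the degree of the subgroup at the \emph{image} point $y$, not at the source $x$. The hypothesis is $\mathrm{Deg}_i(y)\le f_ig-c$, and since $\mathrm{Deg}_i(y)=f_ig-\deg\widetilde{L}$ (after normalization) one gets $\deg\widetilde{L}\ge c$ directly; by contrast, from the source one only has the inequality $\deg\widetilde{H}_{i,g}+\deg\widetilde{L}\le f_ig$, which goes the wrong way. Second, the step you flag as the ``main obstacle'' is routine: write $V_{\kappa_j}=V_{\kappa_j-k_{g,\sigma_j}\mathbf{1}}\otimes(\det)^{k_{g,\sigma_j}}$; the first factor is a polynomial representation, so the induced map is integral, while on the determinant factor $f_j$ contributes exactly $p^{-\lambda_j k_{g,\sigma_j}}$. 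Summing and bounding $k_{g,\sigma_j}\ge\inf_{\tau\in\Sigma_i}k_{g,\tau}$ (with all $\lambda_j\ge 0$) gives the claim at once.
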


\begin{proof}
Avec les notations de $\ref{heckedef}$, nous allons majorer la norme du morphisme $q^*(\kappa) : p_2^* \omega^\kappa \to p_1^*\omega^\kappa$, chacun de ces deux faisceaux étant muni de la structure entière induite par celle de $\omega^\kappa$ via les morphismes $p_1$ et $p_2$ respectivement. \\
Soit $x=(A,i,\phi,H,\omega_{A,\sigma,j}) \in X_{Iw}^{an}(\overline{\mathbb{Q}}_p) $ et $L \subset A[\pi_i]$ un supplémentaire de $H[\pi_i]$ stable par $O_B$. Alors $\psi(x) \in \widetilde{\mathcal{A}}_{ndg}^{rig}(\overline{\mathbb{Q}}_p)$, et on a une variété semi-abélienne $A_0$ définie sur $\overline{\mathbb{Z}_p}$ au-dessus de $\psi(x)$, qui étend la variété abélienne $A$. L'action de $O_B$ s'étend à $A_0$, et le sous-groupe $L$ s'étend en un sous-groupe $L_0$ de $A_0[\pi_i]$. De même que précédemment, il existe un schéma semi-abélien $\widetilde{G}$ sur $\overline{\mathbb{Z}_p}$, globalement extension d'un tore par un schéma abélien, tel que $A_0$ soit le quotient de $\widetilde{G}$ par un réseau étale. On se référera à l'annexe (partie $\ref{semiab}$) pour plus de détails. On a $\widetilde{G}[p] \subset A_0[p]$ ; soit $\widetilde{L} = L_0 \cap \widetilde{G} [p]$. C'est un schéma en groupes fini et plat sur $\overline{\mathbb{Z}_p}$. On a alors des isomorphismes $\omega_{A_0} \simeq \omega_{\widetilde{G}}$ et $\omega_{A_0/L_0} \simeq \omega_{\widetilde{G} / \widetilde{L}}$. \\
Soit $\kappa_1$ le poids défini par $(k_{g,\sigma}, \dots, k_{g,\sigma})_{\sigma \in \Sigma}$, et $\kappa_2 = \kappa - \kappa_1$. On a alors $\omega_{\widetilde{G}}^\kappa = \omega_{\widetilde{G}}^{\kappa_1} \otimes \omega_{\widetilde{G}}^{\kappa_2}$. Puisque les coefficients de $\kappa_2$ sont positifs, le morphisme $\omega_{\widetilde{G}/\widetilde{L}}^{\kappa_2} \to \omega_{\widetilde{G}}^{\kappa_2}$ a une norme inférieure ou égale \`a $1$. Il nous reste \`a calculer la norme du morphisme $\omega_{\widetilde{G}/\widetilde{L}}^{\kappa_1} \to \omega_{\widetilde{G}}^{\kappa_1}$. \\
Le morphisme $q : A_0 \to A_0/L_0$ donne une suite exacte de $\overline{\mathbb{Z}_p} \otimes_{\mathbb{Z}} O_B$-modules
$$0 \to \omega_{A_0/L_0} \to \omega_{A_0} \to \omega_{L_0} \to 0$$
qui s'identifie à
$$0 \to \omega_{\widetilde{G}/\widetilde{L}} \to \omega_{\widetilde{G}} \to \omega_{\widetilde{L}} \to 0$$
En utilisant l'équivalence de Morita, on en déduit une suite exacte de $(\prod_{i=1}^h O_{F_i} ) \otimes_{\mathbb{Z}_p} \overline{\mathbb{Z}_p}$-modules
$$0 \to E \cdot \omega_{\widetilde{G}/\widetilde{L}} \to E \cdot \omega_{\widetilde{G}} \to E \cdot \omega_{\widetilde{L}} \to 0$$
De plus, on sait que ces modules admettent une filtration indexée par les éléments de $\Sigma$, et la suite exacte respecte cette filtration. Remarquons que puisque l'on travaille sur $\overline{\mathbb{Z}_p}$, cette filtration est canonique, et est déduite de la décomposition en somme directe de ces modules après inversion de $p$. On en déduit que les morphismes de la suite exacte sont stricts pour la filtration. Pour tout $1 \leq j \leq d$, on obtient donc une suite exacte
$$0 \to \omega_{\widetilde{G}/\widetilde{L},j} / \omega_{\widetilde{G}/\widetilde{L},j-1} \overset{f_j}{\to} \omega_{\widetilde{G},j} / \omega_{\widetilde{G},j-1} \to \omega_{\widetilde{L},j} / \omega_{\widetilde{L},j-1} \to 0$$ 
où $(\omega_{\widetilde{G},j})_{1 \leq j \leq d}$ est la filtration de $E \cdot \omega_{\widetilde{G}}$, et similairement pour $E \cdot \omega_{\widetilde{G}/\widetilde{L}}$ et $E \cdot \omega_{\widetilde{L}}$. On rappelle qu'on a ordonné les éléments de $\Sigma = \{\sigma_1, \dots, \sigma_d \}$. De plus, on a 
$$\omega_{\widetilde{G}}^{\kappa_1} = \bigotimes_{j=1}^d (\det \omega_{\widetilde{G},j} / \omega_{\widetilde{G},j-1})^{k_{g,\sigma_j}}$$
Si $\sigma_j \notin \Sigma_i$, alors $f_j$ est un isomorphisme puisque $L \subset A[\pi_i]$ donc $\omega_{\widetilde{L},j} / \omega_{\widetilde{L},j-1} = 0$. Soit $\lambda_j = v(\det f_j)$ ; alors
$$\Vert q^*(\kappa_1) \Vert_x = \prod_{j, \sigma_j \in \Sigma_i} p^{ - \lambda_j k_{g,\sigma_j } } \leq  p^{ - \inf_{\tau \in \Sigma_i} k_{g,\tau} \sum_{j,\sigma_j \in \Sigma_i} \lambda_{j} }$$
La proposition découle alors du fait que deg $\widetilde{L} = \sum_{j, \sigma_j \in \Sigma_i} \lambda_j \geq c$.
\end{proof}

\section{Classicité}

Un élément $\kappa \in~X(T_M)^+$ est une famille d'entiers
$$\prod_{i=1}^{h}  \prod_{j=1}^{d_i} (k_{1,j,i} \geq \dots \geq k_{g,j,i} ) $$
Le théorème suivant dit qu'une forme surconvergente propre pour les opérateurs de Hecke de poids suffisamment grand est classique. On rappelle que $d_i = e_i f_i$.

\begin{theo} \label{theogen}
Soit $f$ une forme surconvergente de poids $\kappa \in X(T_M)^+$ sur $X_{Iw}$, propre pour la famille d'opérateurs de Hecke $U_{\pi_i}$ de valeurs propres $\alpha_i$. Supposons que pour tout $1\leq i \leq h$
$$\frac{d_i g(g+1)}{2} + e_i v(\alpha_i) < \inf_{1 \leq j \leq d_i} k_{g,j,i} $$
Alors $f$ est classique.
\end{theo}
 
\noindent Le reste de cette section est consacrée à le preuve de ce théorème. \\
Une forme modulaire surconvergente est définie sur un espace du type Deg$^{-1} ([f_1g - \varepsilon, f_1 g] \times \dots \times [f_h g - \varepsilon, f_h g])$, pour un certain $\varepsilon >0$. Pour montrer que $f$ est classique, nous allons tout d'abord prolonger $f$ à tout $X_{Iw}^{an}$. Le prolongement se fera direction par direction, c'est-à-dire que l'on prolongera $f$ à 
$$Deg^{-1} ([0,f_1g] \times~[f_2g - \varepsilon,f_2g] \times \dots \times [f_hg - \varepsilon,f_hg]) \cap X_{Iw}^{an}$$
puis à
$$Deg^{-1} ([0,f_1g] \times [0,f_2g] \times [f_3g - \varepsilon,f_3g] \times \dots \times~[f_hg -~\varepsilon,f_hg]) \cap X_{Iw}^{an} $$
et ainsi de suite. \\
Chacune de ses étapes se démontrant de manière analogue, nous ne détaillerons que la première, c'est-à-dire le prolongement à 
$$Deg^{-1} ([0,f_1g] \times [f_2g - \varepsilon,f_2g] \times \dots \times [f_hg - \varepsilon,f_hg]) \cap X_{Iw}^{an}$$
Pour conclure, nous utiliserons le théorème $\ref{extension}$, qui permettra d'étendre la forme $f$ à $\overline{X}_{Iw}^{an}$. Un théorème de type GAGA permet ensuite de prouver que $f$ est algébrique, c'est-à-dire que $f$ est une forme classique.

\subsection{Prolongement automatique}

Soit $f$ une forme modulaire surconvergente vérifiant les hypothèses du théorème $\ref{theogen}$. Elle est donc définie sur Deg$^{-1} ([f_1g - \varepsilon, f_1 g] \times \dots \times [f_h g - \varepsilon, f_h g])$, pour un certain $\varepsilon >0$. Pour tout intervalle $I$, notons $\mathcal{U}_I :=Deg^{-1} (I \times [f_2g - \varepsilon,f_2g] \times \dots \times [f_hg - \varepsilon,f_hg]) \cap X_{Iw}^{an} $. La forme $f$ est donc définie sur $\mathcal{U}_{[f_1g-\varepsilon,f_1g]}$. Nous allons prolonger $f$ à $\mathcal{U}_{]f_1g-\frac{1}{e_1},f_1g]}$. 

\begin{prop}
Il est possible de prolonger $f$ à $\mathcal{U}_{]f_1-\frac{1}{e_1},f_1]}$.
\end{prop}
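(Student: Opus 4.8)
La stratégie est d'exploiter la relation de forme propre $U_{\pi_1} f = \alpha_1 f$ (avec $\alpha_1 \neq 0$) et la dynamique de $U_{\pi_1}$ dans la première direction du degré pour prolonger $f$ en diminuant $\mathrm{Deg}_1$. Notons $\mathcal{V} := \mathcal{U}_{[f_1 g - \varepsilon, f_1 g]}$ le domaine de $f$ ; quitte à diminuer $\varepsilon$, je supposerais $\varepsilon < \frac{1}{e_1}$, de sorte que $\mathcal{V}$ est contenu dans le but $\mathcal{U}_{]f_1 g - \frac{1}{e_1}, f_1 g]}$. Le point de départ est la proposition $\ref{augmente}$ : pour tout $x$ et tout $y \in U_{\pi_1}(x)$, on a $\mathrm{Deg}_1(y) \geq \mathrm{Deg}_1(x)$ et $\mathrm{Deg}_j(y) = \mathrm{Deg}_j(x)$ pour $j \neq 1$, de sorte que la correspondance $U_{\pi_1}$ fait croître le premier degré sans toucher aux autres.

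Je fixerais ensuite un rationnel $\alpha \in {]0,1[}$ et considérerais l'ouvert quasi-compact $\mathcal{W}_\alpha := \mathcal{U}_{[f_1 g - \frac{1-\alpha}{e_1}, f_1 g]}$, dont la première direction correspond à l'intervalle $[\frac{k+\alpha}{e_1}, f_1 g]$ avec $k = f_1 e_1 g - 1$. En choisissant un rationnel $\beta \in {]\alpha, 1[}$ tel que $\frac{1-\beta}{e_1} < \varepsilon$, le corollaire $\ref{dyna}$ fournit un entier $N$ avec
$$U_{\pi_1}^N(\mathcal{W}_\alpha) \subset \mathrm{Deg}_1^{-1}\left( \left[ f_1 g - \tfrac{1-\beta}{e_1}, f_1 g \right] \right).$$
Comme $U_{\pi_1}$ préserve les degrés $\mathrm{Deg}_j$ pour $j \geq 2$, cette image reste dans les bandes $[f_j g - \varepsilon, f_j g]$, et donc $U_{\pi_1}^N(\mathcal{W}_\alpha) \subset \mathcal{V}$. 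La forme $f$ est définie sur toute cette image, y compris aux « mauvais » points de la correspondance, ce qui est indispensable puisque l'opérateur sur les formes fait intervenir la trace sur l'ensemble des préimages.

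Le composé d'opérateurs $U_{\pi_1}^N : H^0(U_{\pi_1}^N(\mathcal{W}_\alpha), \omega^\kappa) \to H^0(\mathcal{W}_\alpha, \omega^\kappa)$ appliqué à $f$ produit alors une section $\tilde{f}_\alpha := \alpha_1^{-N} U_{\pi_1}^N f$ sur $\mathcal{W}_\alpha$. Sur $\mathcal{V}$, la croissance du degré donne $U_{\pi_1}(\mathcal{V}) \subset \mathcal{V}$, donc la relation $U_{\pi_1} f = \alpha_1 f$ y est valable et s'itère en $U_{\pi_1}^N f = \alpha_1^N f$ ; ainsi $\tilde{f}_\alpha$ coïncide avec $f$ sur $\mathcal{V}$ et en est bien un prolongement.

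Il reste à recoller ces prolongements lorsque $\alpha \to 0^+$, la réunion des $\mathcal{W}_\alpha$ étant exactement $\mathcal{U}_{]f_1 g - \frac{1}{e_1}, f_1 g]}$. Deux points de vigilance apparaissent, tous deux se ramenant à la relation de forme propre sur $\mathcal{V}$. D'abord, la valeur $\alpha_1^{-N} U_{\pi_1}^N f$ ne dépend pas de l'entier $N$ choisi assez grand : si $N$ convient, alors $U_{\pi_1}^{N+1} f = \alpha_1 U_{\pi_1}^N f$ sur $\mathcal{W}_\alpha$ (car $U_{\pi_1} f = \alpha_1 f$ sur $U_{\pi_1}^N(\mathcal{W}_\alpha) \subset \mathcal{V}$), d'où $\alpha_1^{-(N+1)} U_{\pi_1}^{N+1} f = \alpha_1^{-N} U_{\pi_1}^N f$. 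Ensuite, pour $\alpha' < \alpha$ on a $\mathcal{W}_\alpha \subset \mathcal{W}_{\alpha'}$, et un $N$ convenant à $\mathcal{W}_{\alpha'}$ convient aussi à $\mathcal{W}_\alpha$ ; les sections $\tilde{f}_{\alpha}$ et $\tilde{f}_{\alpha'}$ sont alors données par la même formule et coïncident sur l'intersection. Les $\tilde{f}_\alpha$ se recollent donc en une unique section de $\omega^\kappa$ sur $\mathcal{U}_{]f_1 g - \frac{1}{e_1}, f_1 g]}$ prolongeant $f$. L'essentiel du contenu géométrique étant déjà encapsulé dans le corollaire $\ref{dyna}$, la principale précaution concerne la bonne définition du composé de correspondances sur l'ouvert quasi-compact tout entier et la vérification soigneuse de cette compatibilité.
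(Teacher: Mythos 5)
Ta preuve est correcte et suit essentiellement la même démarche que celle du texte : on applique le corollaire \ref{dyna} pour envoyer, via $U_{\pi_1}^N$, l'ouvert élargi dans le domaine de définition de $f$ (en utilisant que $U_{\pi_1}$ ne modifie pas les autres degrés), on pose $\alpha_1^{-N}U_{\pi_1}^N f$, puis on recolle sur le recouvrement admissible obtenu en faisant varier le paramètre. Les vérifications supplémentaires que tu détailles (indépendance de $N$, compatibilité des sections, coïncidence avec $f$) sont exactement celles que le texte laisse implicites.
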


\begin{proof}
Soit $\beta$ un rationnel avec $0<\beta<\frac{1}{e_1}$. D'après le corollaire $\ref{dyna}$, il existe un entier $N$ tel que 
$$U_{\pi_1}^N (\mathcal{U}_{[f_1g-\beta,f_1g]}) \subset \mathcal{U}_{[f_1g-\varepsilon,f_1g]}$$
La fonction $f_\beta = a_1^{-N} U_{\pi_1}^N f$ est donc définie sur $\mathcal{U}_{[f_1g-\beta,f_1g]}$, et est égale à $f$ sur $\mathcal{U}_{[f_1g-\varepsilon,f_1g]}$. Nous noterons donc encore $f$ cette fonction. De plus, les $(\mathcal{U}_{[f_1g-\beta,f_1g]})$ pour $0<\beta<\frac{1}{e_1}$ forment un recouvrement admissible de $\mathcal{U}_{]f_1g-\frac{1}{e_1},f_1g]}$. On peut donc étendre $f$ à ce dernier intervalle.
\end{proof}

\begin{rema}
Pour démontrer cette proposition, nous avons seulement utilisé le fait que la valeur propre $\alpha_1$ était non nulle.
\end{rema}

\subsection{Séries de Kassaei}

Dans cette partie, nous prolongeons la forme $f$ à $\mathcal{U}_{[0,f_1g]}$. Comme les itérés de l'opérateur $U_{\pi_1}$ n'augmentent pas strictement le degré de $H_{1,g}$ sur cet ouvert, la méthode de la partie précédente ne s'applique pas. Nous allons construire des séries $f_n$, analogues de celles introduites par Kassaei dans \cite{Ka}, qui convergeront vers $f$. Pour cela, nous utiliserons la décomposition de l'opérateur $U_{\pi_1}$ réalisée dans la partie $\ref{decompo}$. \\
Soit $\varepsilon$ un réel strictement positif tel que $v(\alpha_1) + f_1 g(g+1)/2 < (\frac{1}{e_1}-\varepsilon) \inf_{1 \leq j \leq d_1} k_{g,j,1}$. Cela est possible d'après les hypothèses du théorème $\ref{theogen}$. Soit $r$ un nombre rationnel avec $f_1g-\frac{1}{e_1} < r < f_1g-\frac{1}{e_1}+\varepsilon$, et $\mathcal{U}:=\mathcal{U}_{[0,r]}$. \\
Soit $N \geq 1$ un entier ; d'après le théorème $\ref{bigtheo}$, on peut trouver une suite d'ouverts $(\mathcal{U}_j)_{j \in S_N}$ de $\mathcal{U}$, et une décomposition de $U_{\pi_1}^N$ sur chaque cran $\mathcal{U}_j \backslash \mathcal{U}_{j+1}$. De plus, il est possible de faire surconverger arbitrairement cette suite d'ouverts. En effet, soit $(r^{(k)})$ une suite strictement croissante de rationnels avec $r^{(0)}=r$, $r^{(k)} < f_1g-\frac{1}{e_1}+\varepsilon$ pour tout $k$, et $(\mathcal{U}_j^{(k)})$ la suite d'ouverts correspondante à $r^{(k)}$. Alors $\mathcal{U}_j^{(k+1)}$ est un voisinage strict de $\mathcal{U}_j^{(k)}$ dans $\mathcal{U}$ pour tout $j,k$. \\
Notons $\mathcal{V}_j = \mathcal{U}_j^{(j-1)}$ pour tout $j \geq 1$, et $\mathcal{V}_j'=\mathcal{U}_j^{(j)}$ pour tout $i \geq 0$. Alors $\mathcal{V}_j'$ est un voisinage strict de $\mathcal{V}_j$ dans $\mathcal{U}$. Nous avons décomposé l'opérateur $U_{\pi_1}^N$ sur $\mathcal{V}_j' \backslash \mathcal{V}_{j+1}$ en 
$$U_{\pi_1}^N = \coprod_{k=0}^{N-1} U_{\pi_1}^{N-1-k} T_k \coprod T_N$$
avec $T_0 = U_{\pi_1,j}^{good}$ et pour $0 < k < N$
$$T_k = \coprod_{j_1 \in S_{N-1}, \dots, j_k \in S_{N-k}}  U_{\pi_1,j_k}^{good} U_{\pi_1,j_{k-1},j_k}^{bad} \dots U_{\pi_1,j,j_1}^{bad}$$
et
$$T_N = \coprod_{j_1 \in S_{N-1}, \dots, j_{N-1} \in S_{1}} U_{\pi_1,j_{N-1}}^{bad} U_{\pi_1,j_{N-2},j_{N-1}}^{bad} \dots U_{\pi_1,j,j_1}^{bad}$$  
Les images de $U_{\pi_1,j}^{good}$ et de $U_{\pi_1,j_k}^{good}$ ($j_k \in S_{N-k}$) sont incluses dans $\mathcal{U}_{[r^{(j)},f_1g]} \subset \mathcal{U}_{[r,f_1g]}$, et les opérateurs $U_{\pi_1,i,j}^{bad}$, $U_{\pi_1,j}^{bad}$ ne font intervenir que des supplémentaires $L$ de degré supérieur à $f_1g -~r^{(j)} > \frac{1}{e_1} - \varepsilon$. 

\begin{defi}
Les séries de Kassaei sur $\mathcal{V}_j' \backslash \mathcal{V}_{j+1}$ sont définies par
$$f_{N,j} := \alpha_1^{-1} U_{\pi_1,j}^{good} f + \sum_{k=1}^{N-1} \sum_{j_1 \in S_{N-1}, \dots, j_k \in S_{N-k}} \alpha_1^{-k-1} U_{\pi_1,j,j_1}^{bad} \dots U_{\pi_1,j_{k-1},j_k}^{bad} U_{\pi_1,j_k}^{good} f$$
\end{defi}

Cette fonction est bien définie, puisque les opérateurs $U_{\pi_1,j}^{good}$ sont soit nuls, auquel cas leur action sur $f$ donne $0$, soit à valeurs dans $\mathcal{U}_{[r,f_1g]}$ et $f$ est définie sur cet espace. Ce dernier espace étant quasi-compact, $f$ y est bornée, disons par $M$. \\
La proposition $\ref{lemnorm}$ permet de majorer la norme des opérateurs $\alpha_1^{-1} U_{p,j,k}^{bad}$ : la norme de ces opérateurs est inférieure à
$$u_0 = p^{f_1g(g+1)/2+v(\alpha_1) - (\frac{1}{e_1}-\varepsilon) \inf_{1 \leq j \leq d_1} k_{g,j,1}} < 1$$

\begin{lemm}
Les fonctions $f_{N,i}$ sont uniformément bornées.
\end{lemm}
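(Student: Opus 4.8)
Le plan est de majorer s\'epar\'ement chaque terme de la somme d\'efinissant $f_{N,j}$, puis d'exploiter le caract\`ere ultram\'etrique de la norme sup pour conclure. Je commencerais par r\'e\'ecrire le terme d'indice $k$ associ\'e \`a $(j_1,\dots,j_k)$ comme une composition d'op\'erateurs normalis\'es, \`a savoir
$$( \alpha_1^{-1} U_{\pi_1,j,j_1}^{bad} ) \circ \dots \circ ( \alpha_1^{-1} U_{\pi_1,j_{k-1},j_k}^{bad} ) \, ( \alpha_1^{-1} U_{\pi_1,j_k}^{good} f ),$$
soit $k$ op\'erateurs ``bad'' normalis\'es appliqu\'es \`a la section $\alpha_1^{-1} U_{\pi_1,j_k}^{good} f$.

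Il reste alors \`a contr\^oler ces deux ingr\'edients. D'une part, l'op\'erateur $U_{\pi_1,j_k}^{good}$ a son image dans $\mathcal{U}_{[r,f_1g]}$, espace quasi-compact sur lequel $f$ est born\'ee, disons par $M$ ; comme il rel\`eve du cas $c = 0$ de la proposition \ref{lemnorm}, sa norme est major\'ee par $p^{f_1 g(g+1)/2}$, de sorte que
$$| \alpha_1^{-1} U_{\pi_1,j_k}^{good} f | \leq p^{v(\alpha_1) + f_1 g(g+1)/2} \, M =: M'.$$
D'autre part, les op\'erateurs ``bad'' ne font intervenir que des suppl\'ementaires $L$ de degr\'e sup\'erieur \`a $\frac{1}{e_1} - \varepsilon$ ; la proposition \ref{lemnorm}, appliqu\'ee avec $c = \frac{1}{e_1} - \varepsilon$, donne alors $\Vert \alpha_1^{-1} U_{\pi_1}^{bad} \Vert \leq u_0 < 1$.

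Chaque terme d'indice $k$ a donc une norme au plus \'egale \`a $u_0^k M' \leq M'$, puisque $u_0 < 1$. Le point d\'ecisif est que la norme sup est ultram\'etrique : la norme d'une somme est major\'ee par le maximum des normes de ses termes. Le nombre de termes, qui augmente pourtant avec $N$, n'intervient donc pas, et l'on obtient $|f_{N,j}| \leq \max_{0 \leq k \leq N-1} u_0^k M' = M'$. Comme $M'$ ne d\'epend ni de $N$, ni de $j$, ni du point consid\'er\'e, les fonctions $f_{N,j}$ sont uniform\'ement born\'ees.

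La principale difficult\'e ne r\'eside pas dans une estimation technique, mais dans l'organisation de l'argument : c'est pr\'ecis\'ement l'ultram\'etricit\'e, combin\'ee \`a l'in\'egalit\'e stricte $u_0 < 1$ (garantie par le choix de $\varepsilon$ et de $r$ via l'hypoth\`ese $v(\alpha_1) + f_1 g(g+1)/2 < (\frac{1}{e_1} - \varepsilon) \inf_{1 \leq j \leq d_1} k_{g,j,1}$ du th\'eor\`eme \ref{theogen}), qui neutralise la croissance du nombre de termes avec $N$. Il faudra n\'eanmoins v\'erifier que les bornes fournies par la proposition \ref{lemnorm} s'appliquent uniform\'ement, chaque op\'erateur agissant entre des sections sur des sous-ouverts quasi-compacts de $\mathcal{U}$.
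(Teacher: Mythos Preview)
Your proposal is correct and follows essentially the same argument as the paper's proof: bound each term by $u_0^k |\alpha_1^{-1}| p^{f_1 g(g+1)/2} M$ using the norm estimates for the bad operators (each of norm at most $u_0$) and the good operator (norm at most $p^{f_1 g(g+1)/2}$), then invoke the ultrametric inequality to bound the sum by the maximum of its terms, which is $|\alpha_1^{-1}| p^{f_1 g(g+1)/2} M$, independent of $N$ and $j$. Your exposition is slightly more explicit than the paper's about the role of ultrametricity in neutralising the growing number of terms, but the substance is identical.
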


\begin{proof}
On a 
$$ |\alpha_1^{-k-1} U_{\pi_1,j,j_1}^{bad} \dots U_{\pi_1,j_{k-1},j_k}^{bad} U_{\pi_1,j_k}^{good} f |_{\mathcal{V}_j' \backslash \mathcal{V}_{j+1}} \leq u_0^k |\alpha_1^{-1} U_{\pi_1,j_k}^{good} f |_{U_{\pi_1,j_{k-1},j_k}^{bad} \dots U_{\pi_1,j,j_1}^{bad} (\mathcal{V}_j' \backslash \mathcal{V}_{j+1} )} \leq |\alpha_1^{-1} | p^{f_1g(g+1)/2}  M$$
car la norme de $U_{\pi_1,j_k}^{good}$ est majorée par $p^{f_1g(g+1)/2}$.
On peut donc majorer la fonction $f_{N,j}$ par
$$ |f_{N,j} |_{\mathcal{V}_j' \backslash \mathcal{V}_{j+1}} \leq |\alpha_1^{-1} | p^{f_1g(g+1)/2} M$$
ce qui prouve que les fonctions $f_{N,j}$ sont uniformément bornées. 
\end{proof}

Puisque ces fonctions sont bornées, nous pouvons supposer qu'elles sont de norme inférieure à $1$, quitte à multiplier $f$ par une constante. Nous allons maintenant recoller ces fonctions sur $\mathcal{U}$.

\begin{lemm}
Soient $j,k \in S_N$ et $x \in (\mathcal{V}_j' \backslash \mathcal{V}_{j+1}) \cap (\mathcal{V}_k' \backslash \mathcal{V}_{k+1})$. Alors
$$ | (f_{N,j} -  f_{N,k}) (x) | \leq u_0^N M $$
\end{lemm}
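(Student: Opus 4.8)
La strat\'egie consiste \`a comparer les deux s\'eries de Kassaei en les r\'eorganisant \`a partir de la m\^eme correspondance $U_{\pi_1}^N$, et \`a concentrer toute leur diff\'erence dans une queue purement \og mauvaise \fg{} de longueur $N$, dont la norme est contr\^ol\'ee par la proposition \ref{lemnorm}. Posons $G_j = \alpha_1^{-1} U_{\pi_1,j}^{good}$ et $B_j = \alpha_1^{-1} U_{\pi_1,j}^{bad}$ les op\'erateurs normalis\'es attach\'es au cran $\mathcal{V}_j' \backslash \mathcal{V}_{j+1}$ (le double indice des op\'erateurs mauvais \'etant sous-entendu dans la composition), de sorte que $G_j + B_j = \alpha_1^{-1} U_{\pi_1}$ est la correspondance de Hecke normalis\'ee totale, \emph{ind\'ependante du seuil $r^{(j)}$ choisi}. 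Avec ces notations la d\'efinition se r\'e\'ecrit
\[
f_{N,j} = \sum_{m=0}^{N-1} B_j^{m} G_j\, f .
\]
Le point crucial est que chaque terme de $f_{N,j}$ se termine, en tant qu'appliqu\'e \`a $f$, par un op\'erateur bon $G_j$ dont l'image est incluse dans $\mathcal{U}_{[r,f_1g]}$ : ainsi $f_{N,j}$ n'\'evalue jamais $f$ que sur le lieu de haut degr\'e, o\`u elle est d\'efinie et born\'ee par $M$.

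J'\'ecrirais ensuite l'identit\'e obtenue en regroupant les chemins de longueur $N$ de $U_{\pi_1}^N$ selon leur premier pas bon et en utilisant la relation de vecteur propre $\alpha_1^{-1} U_{\pi_1} f = f$, qui \'ecrase les pas situ\'es apr\`es ce premier pas bon :
\[
\alpha_1^{-N} U_{\pi_1}^N f = \sum_{m=0}^{N-1} B_j^{m} G_j\, f + B_j^{N} f = f_{N,j} + B_j^{N} f .
\]
La m\^eme formule vaut pour l'indice $k$, le membre $\alpha_1^{-N} U_{\pi_1}^N f$ \'etant ind\'ependant du seuil ; le th\'eor\`eme \ref{bigtheo} assure que les deux d\'ecompositions se r\'ef\`erent \`a la m\^eme correspondance sur l'intersection $(\mathcal{V}_j'\backslash\mathcal{V}_{j+1})\cap(\mathcal{V}_k'\backslash\mathcal{V}_{k+1})$. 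En soustrayant les deux expressions on obtient alors, sur cet ouvert,
\[
f_{N,j} - f_{N,k} = B_k^{N} f - B_j^{N} f ,
\]
le membre de gauche \'etant une section l\'egitime (tous ses termes se terminant par un op\'erateur bon).

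Il reste \`a majorer cette diff\'erence de queues. Comme $G_j+B_j=G_k+B_k$, on a $B_k-B_j=G_j-G_k$, et cet op\'erateur ne porte que sur les suppl\'ementaires $L$ dont le degr\'e de l'image tombe dans l'anneau mince $[r^{(j)},r^{(k)}]$, donc avec $\deg L > \frac{1}{e_1}-\varepsilon$ ; c'est donc un op\'erateur de type mauvais, de norme $\leq u_0$ d'apr\`es la proposition \ref{lemnorm}, et il n'\'evalue $f$ qu'au voisinage de $f_1g-\frac{1}{e_1}$, o\`u elle est d\'ej\`a d\'efinie. En t\'elescopant $B_k^{N}-B_j^{N}=\sum_{i=0}^{N-1} B_k^{i}(G_j-G_k)B_j^{N-1-i}$, chaque terme est une composition de $N$ op\'erateurs normalis\'es de type mauvais, de norme $\leq u_0$ chacun ; comme $f$ est born\'ee par $M$, l'in\'egalit\'e ultram\'etrique donne
\[
| (f_{N,j} -  f_{N,k}) (x) | \leq \max_i \bigl| B_k^{i}(G_j-G_k)B_j^{N-1-i} f(x)\bigr| \leq u_0^{N} M ,
\]
ce qui est l'\'enonc\'e voulu.

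La principale difficult\'e est la d\'efinition dans l'\'etape interm\'ediaire : les queues individuelles $B_j^{N} f$ et $B_k^{N} f$ font a priori intervenir $f$ sur le lieu de bas degr\'e, o\`u elle n'est pas encore construite. On la contourne en ne manipulant le regroupement qu'au niveau des correspondances (licite car, \`a chaque pas, parties bonne et mauvaise \'epuisent tous les sous-groupes), de sorte que seule la diff\'erence $f_{N,j}-f_{N,k}$ est affirm\'ee \^etre une section ; sa bonne d\'efinition r\'esulte de la r\'ecurrence $f_{N,j}=G_j f+B_j f_{N-1,\bullet}$, o\`u chaque $f_{N-1,\bullet}$ est une section d\'ej\`a construite sur le cran suivant et o\`u $B_j$, \'etant une trace, pr\'eserve la classe des sections d\'efinies. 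Le point \`a surveiller est que la compatibilit\'e des d\'ecompositions sur les intersections (derni\`ere assertion du th\'eor\`eme \ref{bigtheo}) confine effectivement tout l'\'ecart entre les deux stratifications \`a la profondeur $N$, ce qui est pr\'ecis\'ement ce qui fait appara\^itre le facteur $u_0^{N}$ plut\^ot qu'une simple puissance $u_0$.
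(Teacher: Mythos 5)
Votre preuve est correcte et suit essentiellement la m\^eme d\'emarche que celle du papier : dans les deux cas, on r\'e\'ecrit les deux s\'eries \`a partir de la correspondance totale $U_{\pi_1}^N$ (la relation de vecteur propre servant \`a \'ecraser les pas post\'erieurs au premier pas bon), on identifie leur diff\'erence \`a la partie de $\alpha_1^{-N}U_{\pi_1}^N$ dont les cha\^{\i}nes aboutissent dans la fen\^etre de degr\'es $]s',s]$ --- o\`u $f$ est d\'efinie et born\'ee par $M$ ---, et on majore la norme de cet op\'erateur par $u_0^N$ via la proposition \ref{lemnorm} appliqu\'ee \`a chacun des $N$ pas, la monotonie du degr\'e garantissant que tous ces pas sont de type mauvais. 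Votre t\'elescopage suppl\'ementaire n'est qu'un r\'earrangement de cette m\^eme estimation ; le papier obtient la borne directement en d\'ecomposant $U_{\pi_1}^N$ suivant le degr\'e du point final, ce qui \'evite au passage les termes interm\'ediaires mal d\'efinis (du type $B_j^{N}f$) que vous devez traiter, un peu informellement, au niveau des correspondances.
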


\begin{proof}
Il existe une autre mani\`ere de d\'efinir les s\'eries de Kassaei $f_{N,j}$. En effet, on aurait pu d\'ecomposer l'op\'erateur $U_{\pi_1}^N$ en $U_{\pi_1}^{N,good} + U_{\pi_1}^{N,bad}$ suivant les degr\'es des points de $U_{\pi_1}^N$ : l'image de $U_{\pi_1}^{N,good}$ est incluse dans $Deg_1^{-1} (]s , f_1 g])$, et celle de $U_{\pi_1}^{N,bad}$ dans $Deg_1^{-1} ([0,s])$, pour un certain rationnel $s$ compris entre $r$ et $f_1 g -1/e_1 + \varepsilon$. La s\'erie de Kassaei est alors d\'efinie comme $\alpha_1^{-N} U_{\pi_1}^{N,good} f$. La s\'erie de Kassaei $f_{N,j}$ est d\'efinie \`a l'aide d'un rationnel $s$ comme pr\'ec\'edemment ; au-dessus du point $x$ on peut donc d\'ecomposer l'op\'erateur $U_{\pi_1}^N$ en $U_{\pi_1}^{N,good} + U_{\pi_1}^{N,bad}$, avec $f_{N,j} (x) = \alpha_1^{-N} U_{\pi_1}^{N,good} f(x)$. De m\^eme, la s\'erie $f_{N,k}$ est d\'efinie \`a l'aide d'un rationnel $s'$, et au-dessus de $x$, on a la d\'ecomposition $U_{\pi_1}^N = {U_{\pi_1}^{N,good}}' + {U_{\pi_1}^{N,bad}}'$, avec $f_{N,k} (x) = \alpha_1^{-N} {U_{\pi_1}^{N,good}}' f(x)$. \\
Supposons par exemple que $k<j$. On a alors $s' < s$, et au-dessus de $x$, l'op\'erateur $U_{\pi_1}^{N,bad}$ se d\'ecompose en ${U_{\pi_1}^{N,bad}}' + {U_{\pi_1}^{N,bad}}''$, l'op\'erateur ${U_{\pi_1}^{N,bad}}''$ ayant son image incluse dans $Deg_1^{-1} (]s' , s])$. On a alors $f_{N,k} (x) - f_{N,j} (x) = \alpha_1^{-N} {U_{\pi_1}^{N,bad}}'' f(x)$. De plus, la norme de de l'op\'erateur $\alpha_1^{-N} {U_{\pi_1}^{N,bad}}''$ est inf\'erieure \`a $u_0^N$ d'apr\`es les calculs sur les normes des op\'erateurs de Hecke. D'o\`u
$$ | (f_{N,j} -  f_{N,k}) (x) | \leq u_0^N |f|_{{U_{\pi_1}^{N,bad}}''(x)} $$
De plus, l'ensemble ${U_i^{N,bad}}'' (x)$ \'etant inclus dans $Deg_1^{-1} ([r , f_1 g])$, on a $|f|_{{U_i^{N,bad}}''(x)} \leq M$ ce qui donne la majoration. 
\end{proof}

\begin{prop}
Il existe un entier $A_N$ telle que les fonctions $(f_{N,j})_{j\in S_N}$ se recollent en une fonction $g_N \in H^0(\mathcal{U}, \tilde{\omega}^\kappa / p^{A_N})$. De plus, la suite $(A_N)_{N \geq 1}$ tend vers l'infini.
\end{prop}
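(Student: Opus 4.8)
The plan is to reduce the sections $f_{N,j}$ modulo a suitable power of $p$ and then invoke the sheaf axiom. First I would record the geometric input: the sets $\mathcal{V}_j' \backslash \mathcal{V}_{j+1}$, for $j \in S_N$, form a finite admissible covering of $\mathcal{U}$. This rests on the structure of the decomposition of Theorem $\ref{bigtheo}$: the $\mathcal{V}_j$ are decreasing and vanish beyond the rank $L(N)$, while each $\mathcal{V}_j'$ is a strict neighborhood of $\mathcal{V}_j$, so that $\mathcal{U} = \bigcup_{j \in S_N} (\mathcal{V}_j' \backslash \mathcal{V}_{j+1})$ with the covering admissible and finite. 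Since we have normalized $f$ so that $|f_{N,j}| \leq 1$, each $f_{N,j}$ is a section of the integral sheaf $\widetilde{\omega}^\kappa$ over $\mathcal{V}_j' \backslash \mathcal{V}_{j+1}$; I write $\overline{f}_{N,j}$ for its reduction modulo $p^{A_N}$, the integer $A_N$ being chosen below.

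Next I would fix $A_N$. Recall from the computation preceding this statement that $u_0 < 1$ (this inequality is exactly the hypothesis relating the weight $\kappa$ to the slope $v(\alpha_1)$). I let $A_N$ be the largest integer with $p^{-A_N} \geq u_0^N M$; since $u_0 < 1$ and $M$ is a fixed constant independent of $N$, the integer $A_N$ is positive for $N$ large and satisfies $A_N \to \infty$ as $N \to \infty$, which is precisely what the forthcoming convergence argument will require. The key point is then immediate from the overlap estimate of the previous lemma: for $j,k \in S_N$ and $x$ in the intersection $(\mathcal{V}_j' \backslash \mathcal{V}_{j+1}) \cap (\mathcal{V}_k' \backslash \mathcal{V}_{k+1})$ one has $|(f_{N,j} - f_{N,k})(x)| \leq u_0^N M \leq p^{-A_N}$. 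Hence $f_{N,j} - f_{N,k}$ lies in $p^{A_N} \widetilde{\omega}^\kappa$ over the overlap, so that $\overline{f}_{N,j} = \overline{f}_{N,k}$ there.

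It then remains to glue. The quotient $\widetilde{\omega}^\kappa / p^{A_N}$ is a sheaf for the Grothendieck topology on $\overline{X}_{Iw}^{an}$; the $\overline{f}_{N,j}$ are finitely many sections defined over the admissible covering $\{\mathcal{V}_j' \backslash \mathcal{V}_{j+1}\}_{j \in S_N}$ of $\mathcal{U}$, and by the previous paragraph they agree on all pairwise intersections. The sheaf axiom therefore produces a unique section $g_N \in H^0(\mathcal{U}, \widetilde{\omega}^\kappa / p^{A_N})$ restricting to $\overline{f}_{N,j}$ on each piece, which is the desired conclusion.

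The step I expect to be the most delicate is the geometric one, namely checking carefully that $\{\mathcal{V}_j' \backslash \mathcal{V}_{j+1}\}_{j \in S_N}$ is genuinely an admissible covering of $\mathcal{U}$ (each piece being the intersection of a quasi-compact open with the complement of a quasi-compact open), together with the observation that agreement on pairwise overlaps is all that is needed to glue a single section, with no higher cocycle obstruction. Both facts follow from the properties of the decomposition furnished by Theorem $\ref{bigtheo}$ and from the strict-neighborhood relation between $\mathcal{V}_j$ and $\mathcal{V}_j'$; granting these, the construction of $g_N$ reduces to the normed bookkeeping carried out above.
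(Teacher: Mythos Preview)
Your proposal is correct and takes essentially the same route as the paper: choose $A_N$ so that $u_0^N M \leq p^{-A_N}$, reduce each $f_{N,j}$ modulo $p^{A_N}$, and glue over $\mathcal{U}$ via the sheaf axiom using the overlap estimate of the preceding lemma. The only difference is organizational: the paper glues inductively two pieces at a time, using at each step the two-element admissible covering $(\mathcal{V}_{j}' \cap \mathcal{V}_{j-1}',\ \mathcal{V}_{j-1}' \backslash \mathcal{V}_{j})$ of $\mathcal{V}_{j-1}'$ furnished directly by the strict-neighborhood relation, whereas you apply the sheaf axiom once over the full finite covering $\{\mathcal{V}_j' \backslash \mathcal{V}_{j+1}\}_j$; the paper's arrangement has the small advantage that the admissibility you flag as delicate is then entirely automatic.
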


\begin{proof}
La décomposition de l'ouvert $\mathcal{U}$ étant finie, soit $L$ tel que $\mathcal{V}_{L+1}$ soit vide. La fonction $f_{N,L}$ est donc définie sur $\mathcal{V}_L'$. La fonction $f_{N,L-1}$ est définie sur $\mathcal{V}_{L-1}' \backslash \mathcal{V}_L$. \\
De plus, d'après le lemme précédent, on a
$$ | f_{N,L-1} - f_{N,L} |_{(\mathcal{V}_L' \cap \mathcal{V}_{L-1}') \backslash \mathcal{V}_L} \leq u_0^N M$$
Soit $A_N$ le plus grand entier tel que $u_0^N M \leq p^{-A_N}$ ; comme $u_0 < 1$, la suite $(A_N)_{N \geq 1}$ tend vers l'infini. \\
Les fonctions $f_{N,L-1}$ et $f_{N,L}$ sont donc égales modulo $p^{A_N}$ sur $(\mathcal{V}_L' \cap \mathcal{V}_{L-1}') \backslash \mathcal{V}_L$. Comme $(\mathcal{V}_L' \cap~\mathcal{V}_{L-1}' , \mathcal{V}_{L-1}' \backslash  \mathcal{V}_L)$ est un recouvrement admissible de $\mathcal{V}_{L-1}'$ , celles-ci se recollent en une fonction $g_{N,L-1} \in H^0 ( \mathcal{V}_{L-1}', \tilde{\omega}^\kappa / p^{A_N})$. \\
De même, $g_{N,L-1}$ et $f_{N,L-2}$ sont égales (modulo $p^{A_N}$) sur $(\mathcal{V}_{L-2}' \cap \mathcal{V}_{L-1} ') \backslash \mathcal{V}_{L-1}$, et donc se recollent en $g_{N,L-2} \in H^0 (\mathcal{V}_{L-2}' , \tilde{\omega}^\kappa / p^{A_N})$. \\
En répétant ce processus, on voit que les fonctions $f_{N,j}$ se recollent toutes modulo $p^{A_N}$ sur $\mathcal{V}_0' = \mathcal{U}$, et définissent donc une fonction $g_N \in H^0(\mathcal{U}, \tilde{\omega}^\kappa / p^{A_N})$.
\end{proof}

\begin{prop}
Les fonctions $(g_N)$ définissent un système projectif dans $\underset{\leftarrow}\lim $ $ H^0 ( \mathcal{U}, \tilde{\omega}^\kappa / p^m)$.
\end{prop}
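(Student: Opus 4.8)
The statement asserts that the functions $(g_N)$ form a projective system in $\varprojlim H^0(\mathcal{U}, \tilde\omega^\kappa/p^m)$. Concretely, we must exhibit, for each $N$, a compatibility relation showing that $g_{N+1}$ reduces to $g_N$ modulo a suitable power of $p$ — more precisely, that $g_{N+1} \equiv g_N \pmod{p^{A_N}}$, so that the images of the $g_N$ in the various quotients $\tilde\omega^\kappa/p^m$ are coherent as $N$ grows.

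\begin{proof}
Le plan est de comparer directement les s\'eries de Kassaei $f_{N,j}$ et $f_{N+1,j'}$ au-dessus d'un m\^eme point $x$, en utilisant la description alternative introduite dans la preuve du lemme pr\'ec\'edent. Rappelons qu'au-dessus de $x$, la fonction $g_N(x)$ co\"incide avec $\alpha_1^{-N} U_{\pi_1}^{N,good} f(x)$ pour une d\'ecomposition $U_{\pi_1}^N = U_{\pi_1}^{N,good} + U_{\pi_1}^{N,bad}$ relative \`a un rationnel $s$ compris entre $r$ et $f_1 g - 1/e_1 + \varepsilon$. La premi\`ere \'etape consiste \`a \'ecrire $U_{\pi_1}^{N+1} = U_{\pi_1} \circ U_{\pi_1}^N$ et \`a d\'ecomposer cet op\'erateur au-dessus de $x$ selon le m\^eme rationnel $s$. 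On obtient alors $U_{\pi_1}^{N+1,good} = U_{\pi_1}^{good} \circ U_{\pi_1}^{N,good} + (\text{termes mixtes})$, o\`u les termes mixtes font intervenir au moins un facteur $U_{\pi_1}^{bad}$.

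La deuxi\`eme \'etape est le c\oe ur du calcul : il s'agit de constater que la diff\'erence $g_{N+1}(x) - g_N(x)$ s'exprime uniquement \`a l'aide de termes contenant au moins un op\'erateur $U_{\pi_1}^{bad}$, dont l'image reste dans $Deg_1^{-1}([0,s])$. En effet, la partie \og purement good \fg{} de $U_{\pi_1}^{N+1}$ se factorise \`a travers $U_{\pi_1}^{good} \circ U_{\pi_1}^{N,good}$, et comme $f$ est propre pour $U_{\pi_1}$ de valeur propre $\alpha_1$, on a $\alpha_1^{-1} U_{\pi_1}^{good} f = f - \alpha_1^{-1} U_{\pi_1}^{bad} f$ l\`a o\`u $f$ est d\'ej\`a d\'efinie ; la contribution principale reproduit donc $g_N(x)$, et le reste est gouvern\'e par des op\'erateurs \og bad \fg. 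D'apr\`es la proposition $\ref{lemnorm}$, la norme de chacun de ces op\'erateurs $\alpha_1^{-1} U_{\pi_1}^{bad}$ est major\'ee par $u_0 < 1$, et comme il y a au moins un tel facteur suppl\'ementaire par rapport \`a $g_N$, on obtient une majoration du type $|g_{N+1}(x) - g_N(x)| \leq u_0^{N} M$, voire $u_0^{N+1} M$.

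La troisi\`eme \'etape transforme cette estimation en la relation de compatibilit\'e voulue. Puisque $A_N$ a \'et\'e choisi tel que $u_0^N M \leq p^{-A_N}$, l'in\'egalit\'e pr\'ec\'edente montre que $g_{N+1} \equiv g_N \pmod{p^{A_N}}$ sur tout $\mathcal{U}$. Les images de $g_N$ et $g_{N+1}$ dans $H^0(\mathcal{U}, \tilde\omega^\kappa/p^{A_N})$ co\"incident donc, ce qui est exactement la condition de compatibilit\'e d\'efinissant un syst\`eme projectif dans $\varprojlim_m H^0(\mathcal{U}, \tilde\omega^\kappa/p^m)$. Le principal obstacle technique sera le bon suivi combinatoire des termes mixtes dans la d\'ecomposition de $U_{\pi_1}^{N+1}$ : il faut v\'erifier soigneusement, en utilisant la propret\'e de $f$ pour $U_{\pi_1}$ et la coh\'erence des d\'ecompositions du th\'eor\`eme $\ref{bigtheo}$ pour des rationnels $r^{(k)}$ voisins, que le regroupement des termes reconstitue exactement $g_N$ modulo un terme uniform\'ement contr\^ol\'e par $u_0^N M$.
\end{proof}
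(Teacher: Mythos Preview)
Your overall strategy is sound and close to the paper's: you correctly aim to show $g_{N+1}\equiv g_N\pmod{p^{A_N}}$ by comparing the Kassaei series pointwise, and your use of the alternative description $g_N(x)=\alpha_1^{-N}U_{\pi_1}^{N,\mathrm{good}}f(x)$ together with the eigenform relation $\alpha_1^{-1}U_{\pi_1}f=f$ on the region $Deg_1>s$ is a legitimate (and elegant) way to identify the ``main part'' of $g_{N+1}(x)$ with $g_N(x)$.

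There is, however, a real gap in your bound on the remainder. You assert that the mixed terms involve ``au moins un facteur $U_{\pi_1}^{bad}$'' and then conclude the bound $u_0^N M$; but one bad factor only buys you $u_0$, not $u_0^N$. The missing observation is this: if $y\in U_{\pi_1}^{N,\mathrm{bad}}(x)$, i.e.\ $Deg_1(y)\le s$, then because $U_{\pi_1}$ \emph{increases} $Deg_1$, every intermediate point along the length-$N$ path from $x$ to $y$ also has $Deg_1\le s$. Hence all $N$ steps of that path are ``bad'' (each supplementary subgroup has degree $\ge f_1g-s>1/e_1-\varepsilon$), and the corresponding operator is bounded by $u_0^N$. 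Once you insert this, your argument goes through, with the remaining term being $\alpha_1^{-(N+1)}$ times a composition of $N$ bad steps followed by one good step applied to $f$, exactly as in the paper's term $h_2$.

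For comparison, the paper avoids this monotonicity argument by working directly with the structured decomposition of Th\'eor\`eme~\ref{bigtheo}: it writes $f_{N+1,k}=h_1+h_2$, where $h_1$ collects the terms with $\le N-1$ bad steps and is therefore \emph{itself} a Kassaei series for $U_{\pi_1}^N$ (so the previous lemma applies verbatim to give $|f_{N,j}-h_1|\le p^{-A_N}$), while $h_2$ is the $T_N$-term, which explicitly displays $N$ bad factors. You should also note, as the paper does, that the ambient rational may differ between $g_N$ and $g_{N+1}$; this is handled by another invocation of the previous lemma and costs at most a constant (the paper absorbs it by replacing $A_N$ with $A_N'=A_N-f_1-v(\alpha_1)$).
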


\begin{proof}
Nous allons prouver que $g_{N+1}$ et $g_N$ sont égales modulo $p^{A_N}$. Soit $ x \in \mathcal{U}$ ; nous avons construit en $x$ les séries de Kassaei $f_{N,j}$ et $f_{N+1,k}$. Or le terme $f_{N+1,k}$ provient d'une décomposition de $U_{\pi_1}^{N+1}$ du type
$$U_{\pi_1}^{N+1} = \sum_{l=0}^{N} U_{\pi_1}^{N-l} T_N + T_{N+1}$$
Nous pouvons donc écrire $f_{N+1,k} = h_1 + h_2$, la fonction $h_1$ étant associée à l'opérateur $\sum_{l=0}^{N-1} U_{\pi_1}^{N-1-l} T_N$  
et $h_2$ à $T_N$. \\
Or la fonction $h_1$ est en réalité une série de Kassaei pour une certaine décomposition de $U_{\pi_1}^N$ : le lemme précédent donne donc 
$$ | (f_{N,j} - h_1) (x) | \leq p^{-A_N}$$
De plus, on a
$$h_2 = \sum_{j_1 \in S_N, \dots, j_N \in S_1} \alpha_1^{-N-1} U_{\pi_1,j,j_1}^{bad} \dots U_{\pi_1,j_{N-1},j_N}^{bad} U_{\pi_1,j_N}^{good} f $$
donc comme les opérateurs $\alpha_{1}^{-1} U_{\pi_1,i,l}^{bad}$ ont une norme inférieure à $u_0$, 
$$ | h_2(x) | \leq u_0^N p^{f_1} |\alpha_1^{-1} | M \leq p^{-A_N'}$$
avec $A_N'=A_N - f_1 - v(\alpha_1)$. Quitte à remplacer $A_N$ par $A_N'$, on voit donc que la réduction de $g_{N+1}$ modulo $p^{A_N}$ est égal à $g_N$.
\end{proof}

En utilisant le gluing lemma (lemme $\ref{glue}$), on voit donc que les fonctions $g_N$ définissent une fonction $g \in H^0(\mathcal{U}_0,\omega^\kappa)$ pour tout ouvert quasi-compact $\mathcal{U}_0$ inclus dans $\mathcal{U}$, donc un élément de $H^0(\mathcal{U}_0,\omega^\kappa)$. Bien sûr, $g$ coïncide avec $f$ sur $\mathcal{U}_{]f_1g-\frac{1}{e_1},f_1g]}$. \\
En effet, si $x \in \mathcal{U}_{]f_1g-\frac{1}{e_1},f_1g]}$, il existe $N_0$ tel que $U_{\pi_1}^{N} (x) \subset \mathcal{U}_{[f_1g-\varepsilon,f_1g]}$ pour $N \geq N_0$, et la série de Kassaei est alors stationnaire égale à 
$$ \alpha_1^{-N_0} U_{\pi_1}^{N_0} f=f$$
Nous pouvons donc étendre $f$ à $\mathcal{U}_{[0,f_1g]}$.

\subsection{Fin de la démonstration}

Nous avons étendu $f$ à $\mathcal{U}_{[0,f_1g]} = Deg^{-1} ([0,f_1g] \times [f_2g-\varepsilon,f_2g] \times \dots \times [f_hg-\varepsilon,f_hg]) \cap X_{Iw}^{an}$. En utilisant le fait que $f$ soit propre pour $U_{\pi_2}$, et en utilisant la relation vérifiée par la valeur propre $\alpha_2$, la même méthode montre que l'on peut étendre $f$ à $Deg^{-1} ([0,f_1g] \times [0,f_2g] \times [f_3g-\varepsilon,f_3g] \times \dots \times [f_hg-\varepsilon,f_hg]) \cap X_{Iw}^{an}$. En répétant ce processus, on voit donc que l'on peut étendre $f$ à tout $X_{Iw}^{an}$.  \\
Nous avons donc étendu à la fonction $f$ en un élément de $H^0 (X_{Iw}^{an}, \omega^\kappa)$. Il nous reste encore à montrer que $f$ s'étend au bord. Dans le cas où $dg>1$, et dans le cas algébrique, le principe de Koecher nous assure que l'on peut négliger les pointes dans la définition des formes modulaires (voir \cite{LanKoecher}). Il doit sans doute être possible de déduire un analogue analytique de ce résultat, c'est-à-dire démontrer que toute forme modulaire définie sur $X_{Iw}^{an}$ s'étend à $\overline{X}_{Iw}^{an}$. Néanmoins, dans notre cas, il est possible de raisonner plus simplement, et en ne supposant pas que $dg>1$. Nous allons pour cela utiliser le théorème $\ref{extension}$ : il nous suffit de prouver que la forme $f$ est bornée sur $X_{Iw}^{an}$ pour prouver qu'elle s'étend au bord.

\begin{prop}
La forme $f$ est bornée sur $X_{Iw}^{an}$.
\end{prop}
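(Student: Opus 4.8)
The plan is to collect the uniform bounds produced at each stage of the analytic continuation and to observe that the regions on which they hold cover $X_{Iw}^{an}$. The continuation was carried out direction by direction, and at each stage the extended form was controlled by a Kassaei series whose norm was bounded independently of the approximation order; it is precisely this uniformity that I would exploit. I start from the overconvergent form, which is a section over a region of the type $Deg^{-1}(\prod_{i=1}^h [f_ig - \varepsilon, f_ig])$; this region equals $\overline{X}_{Iw,I}$ for a product $I$ of compact rational intervals and is therefore quasi-compact, so $f$ is bounded there, say by $M_0$.

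In the first step, on $\mathcal{U}_{[0,f_1g]}$ the form is the limit of the glued functions $g_N$ assembled from the local series $f_{N,j}$, and the estimate already established, $|f_{N,j}|_{\mathcal{V}_j' \setminus \mathcal{V}_{j+1}} \le |\alpha_1^{-1}|\, p^{f_1 g(g+1)/2}\, M$, is uniform in $N$ and in $j$ (here $M$ bounds $f$ on the quasi-compact region $\mathcal{U}_{[r,f_1g]}$). Passing to the limit yields a bound for $f$ on $\mathcal{U}_{[0,f_1g]}$ by a fixed finite multiple of $M$, namely $C_1 M$ with $C_1 := |\alpha_1^{-1}|\, p^{f_1 g(g+1)/2}$. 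It is essential that this constant not depend on $N$: this is exactly what the Kassaei method gains over the naive iterate $\alpha_1^{-N} U_{\pi_1}^N f$.

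I would then iterate over the remaining directions. At the $i$-th step the same argument, now using that $f$ is an eigenform for $U_{\pi_i}$ with non-zero eigenvalue $\alpha_i$ together with the norm bound of Proposition $\ref{lemnorm}$, shows that extending $f$ in the $i$-th degree direction multiplies the current bound by the finite factor $C_i := |\alpha_i^{-1}|\, p^{f_i g(g+1)/2}$, the new estimate again being uniform in the approximation order. Since $Deg$ takes values in $\prod_{i=1}^h [0, f_ig]$, after the $h$-th step the form is defined and bounded on $Deg^{-1}(\prod_{i=1}^h [0, f_ig]) \cap X_{Iw}^{an} = X_{Iw}^{an}$, with total bound $M_0 \prod_{i=1}^h C_i < \infty$, which is the assertion.

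The one point requiring care is that the bound not degenerate as the degree descends to its minimum value: the automatic-extension formula $\alpha_i^{-N} U_{\pi_i}^N f$ produces norms growing like $p^{N(v(\alpha_i) + f_ig(g+1)/2)}$ with the number $N$ of applied Hecke operators. This is exactly the obstacle the Kassaei series are designed to remove, their defining bound being uniform in $N$; consequently each of the finitely many extension steps contributes only the fixed factor $C_i$, and the resulting global bound on $X_{Iw}^{an}$ stays finite. Once boundedness is in hand, the extension theorem $\ref{extension}$ applies to push $f$ across the boundary to $\overline{X}_{Iw}^{an}$.
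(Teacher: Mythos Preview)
Your argument is correct and follows essentially the same route as the paper: start from the quasi-compact initial region where $f$ is automatically bounded, then observe that at each direction the Kassaei series are uniformly bounded (by $|\alpha_i^{-1}|\,p^{f_ig(g+1)/2}$ times the previous bound), and iterate. One small imprecision: you write that on $\mathcal{U}_{[0,f_1g]}$ the form is the limit of the glued $g_N$, but those are only defined on $\mathcal{U}_{[0,r]}$; the remaining piece $\mathcal{U}_{[r',f_1g]}$ (with $f_1g-1/e_1 < r' < r$) is covered by the automatic extension $\alpha_1^{-N}U_{\pi_1}^N f$ for a \emph{fixed} $N$, hence bounded there because $U_{\pi_1}$ is a bounded operator --- this is the bound you call $M$, so the argument goes through, but it is worth saying explicitly that two pieces are being glued, as the paper does.
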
 

\begin{proof}
Rappelons que nous étions partis d'une section définie sur Deg$^{-1} ([f_1g - \varepsilon, f_1 g] \times \dots \times [f_h g - \varepsilon, f_h g])$, pour un certain $\varepsilon >0$. Comme cet espace est quasi-compact, $f$ y est automatiquement bornée. Nous allons maintenant démontrer, qu'à chaque étape du prolongement, $f$ reste bornée. \\
Regardons par exemple l'extension de $f$ à $\mathcal{U}_{[0,f_1 g]}$. La forme $f$ est obtenue en recollant les séries de Kassaei $g$ définies sur $\mathcal{U}_{[0,r]}$, et une forme $f_0$ définie sur $\mathcal{U}_{[r',f_1 g]}$ par la formule $f_0 = \alpha_1^{-N} U_{\pi_1}^N f$, où $r$ et $r'$ sont des rationnels vérifiant 
$$f_1 g - \frac{1}{e_1} < r' < r < f_1 g$$
Comme l'opérateur $U_{\pi_1}$ est borné, $f_0$ est bornée. De plus, comme cela a été vu dans le paragraphe précédent, les séries définissant $g$ sont uniformément bornées. On en déduit que le prolongement de $f$ à $\mathcal{U}_{[0,f_1 g]}$ est borné.
\end{proof}

Puisque $f$ est bornée sur $X_{Iw}^{an}$, on en déduit d'après le théorème $\ref{extension}$ qu'elle s'étend à $\overline{X}_{Iw}^{an}$. Comme $\overline{X}_{Iw}$ est propre, on en déduit par GAGA que $f$ provient d'un élément de $H^0( \overline{X}_{Iw}, \omega^\kappa)$, soit que $f$ est classique.

\section{Cas des variétés de type (A)} \label{partie_A}

\subsection{Données et variétés de Shimura}

Rappelons les données paramétrant les variétés de Shimura PEL de type (A) (voir \cite{Ko}). Soit $B$ une $\mathbb{Q}$-algèbre simple munie d'une involution positive $\star$. Soit $F$ le centre de $B$ et $F_0$ le sous-corps de $F$ fixé par $\star$. Le corps $F_0$ est une extension totalement réelle de $\mathbb{Q}$, soit $d$ son degré. Faisons les hypothèses suivantes :

\begin{itemize}
\item $[F:F_0]=2$.
\item Pour tout plongement $F_0 \to \mathbb{R}$, $B \otimes_{F_0} \mathbb{R} \simeq $M$_n(\mathbb{C})$, et l'involution $\star$ est donnée par $A \to \overline{A}^t$.
\end{itemize}

Soit également $(U_{\mathbb{Q}},\langle,\rangle)$ un $B$-module hermitien non dégénéré, l'accouplement étant alterné. Soit $G$ le groupe des automorphismes du $B$-module hermitien $U_{\mathbb{Q}}$ ; pour toute $\mathbb{Q}$-algèbre $R$, on a donc
$$G(R) = \left\{ (g,c) \in GL_{B} (U_{\mathbb{Q}} \otimes_{\mathbb{Q}} R) \times R^* , \langle gx,gy \rangle =c \langle x,y\rangle \text{ pour tout } x,y \in U_{\mathbb{Q}} \otimes_{\mathbb{Q}} R \right\} $$

Soient $\tau_1, \dots, \tau_d$ les plongements de $F_0$ dans $\mathbb{R}$ ; soit également $\sigma_i$ et $\overline{\sigma_i}$ les deux plongements de $F$ dans $\mathbb{C}$ étendant $\tau_i$. Le choix de $\sigma_i$ donne un isomorphisme $F \otimes_{F_0} \mathbb{R} \simeq \mathbb{C}$. On a également $B_i=B \otimes_{F_0,\tau_i} \mathbb{R} \simeq $M$_n (\mathbb{C})$. Notons $U_i = U_\mathbb{Q} \otimes_{F_0,\tau_i} \mathbb{R}$. D'après l'équivalence de Morita, $U_i \simeq~\mathbb{C}^n \otimes W_i$, où $B_i$ agit sur le premier facteur et $W_i$ est un $\mathbb{C}$-espace vectoriel. La structure anti-hermitienne sur $U_i$ en induit une sur $W_i$, et on note $(a_i,b_i)$ sa signature.
 Alors $G_\mathbb{R}$ est isomorphe à
$$\text{G} \left( \prod_{i=1}^d \text{U}(a_i,b_i) \right)$$
où $a_i + b_i$ est indépendant de $i$ et vaut $\frac{1}{2nd} $dim$_\mathbb{Q} U_\mathbb{Q}$. \\
Donnons-nous également un morphisme de $\mathbb{R}$-algèbres $h : \mathbb{C} \to $End$_B U_\mathbb{R}$ tel que $\langle h(z)v,w\rangle ~=~\langle v,h(\overline{z})w\rangle$ et $(v,w) \to \langle v,h(i)w\rangle$ est définie positive. Ce morphisme définit donc une structure complexe sur $U_\mathbb{R}$ : soit $U^{1,0}_{\mathbb{C}}$ le sous-espace de $U_\mathbb{C}$ pour lequel $h(z)$ agit par la multiplication par $z$. \\
On a alors $U^{1,0}_{\mathbb{C}} \simeq \prod_{i=1}^d (\mathbb{C}^n)^{a_i} \oplus \overline{(\mathbb{C}^n)}^{b_i}$ en tant que $B \otimes_{\mathbb{Q}} \mathbb{R} \simeq \oplus_{i=1}^d $M$_n( \mathbb{C})$-module (l'action de M$_n(\mathbb{C})$ sur $(\mathbb{C}^n)^{a_i} \oplus \overline{(\mathbb{C}^n)}^{b_i}$ est l'action standard sur le premier facteur et l'action conjuguée sur le second) . \\
Soient également un ordre $O_B$ de $B$ stable par $\star$, et un réseau $U$ de $U_\mathbb{Q}$ tel que l'accouplement $\langle,\rangle$ restreint à $U\times U$ soit à valeurs dans $\mathbb{Z}$. Nous ferons également les hypothèses suivantes : 
\begin{itemize}
\item $B \otimes_{\mathbb{Q}} \mathbb{Q}_p$ est isomorphe à un produit d'algèbres de matrices à coefficients dans une extension finie de $\mathbb{Q}_p$.
\item $O_B$ est un ordre maximal en $p$.
\item L'accouplement $U \times U \to \mathbb{Z}$ est parfait en $p$.
\end{itemize}
$ $\\
Soit $\mathbb{Z}_{(p)}$ le localisé de $\mathbb{Z}$ en $p$ ; $O_B$ est un $\mathbb{Z}_{(p)}$-module libre. Soit $\alpha_1, \dots, \alpha_t$ une base de ce module, et 
$$\text{det}_{U^{1,0}} = f(X_1, \dots, X_t) = \det (X_1 \alpha_1 + \dots + X_t \alpha_t ;U^{1,0}_{\mathbb{C}} \otimes_{\mathbb{C}} \mathbb{C} [X_1, \dots, X_t])$$
On montre (\cite{Ko}) que $f$ est un polynôme à coefficients algébriques. Le corps de nombres $E$ engendré par ses coefficients est appelé le corps réflexe. \\
Soit $p = \prod_{i=1}^h \pi_i ^{e_i}$ la décomposition de $p$ dans $F_0$, et soit $f_i$ le degré résiduel de chacune de ces places. On notera $\Sigma$ l'ensemble des plongements de $F_0$ dans $\overline{\mathbb{Q}}_p$, et $\Sigma_i$ le sous-ensemble des plongements envoyant $\pi_i$ dans l'idéal maximal de $\overline{\mathbb{Z}_p}$. On notera également $F_{0,i}$ la complétion de $F_0$ en $\pi_i$, et $O_{F_{0,i}}$ son anneau des entiers. Alors $B \otimes_{\mathbb{Q}} \mathbb{Q}_p \simeq \prod_{i=1}^{h} B_i$, où $B_i$ est la complétion de $B$ en $\pi_i$. Pour déterminer la structure de $B_i$ ,on peut distinguer $3$ cas.
\begin{itemize}
\item Cas $1$ : $\pi_i$ est décomposé dans $F$. Alors $B_i \simeq $ M$_n (F_{0,i}) \oplus $M$_n (F_{0,i})$.
\item Cas $2$ : $\pi_i$ est inerte dans $F$. Alors $B_i \simeq $ M$_n (F_i)$, où $F_i$ est la complétion de $F$ en $\pi_i$.
\item Cas $3$ : $\pi_i$ est ramifié dans $F$, $\pi_i = \varpi_i^2$. Alors $B_i \simeq $ M$_n (F_i)$, où $F_i$ est la complétion de $F$ en $\varpi_i$.
\end{itemize}
$ $\\
Définissons maintenant la variété de Shimura PEL de type (A) associée à $G$. Soit $K$ une extension de $\mathbb{Q}_p$ contenant les images de tous les plongements possibles $F \hookrightarrow \overline{\mathbb{Q}}_p$ et $E \hookrightarrow \overline{\mathbb{Q}}_p$.

\begin{defi}
Soit $X$ l'espace de modules sur Spec$(K)$ dont les $S$-points sont les classes d'isomorphismes des $(A,\lambda,\iota,\eta)$ où
\begin{itemize}
\item $A \to S$ est un schéma abélien
\item $\lambda : A \to A^t$ est une polarisation de degré premier à $p$.
\item $\iota : O_B \to $ End $A$ est compatible avec les involutions $\star$ et de Rosati, et les polynômes $\det_{U^{1,0}}$ et $\det_{Lie (A)}$ sont égaux.
\item $\eta : A[N] \to U/NU$ est une similitude symplectique $O_B$-linéaire, qui se relève localement pour la topologie étale en une similitude symplectique $O_B$-linéaire
$$H_1 (A,\mathbb{A}_f^p) \to U \otimes_{\mathbb{Z}} \mathbb{A}_f^p$$
\end{itemize}
\end{defi}

La condition du déterminant est explicite : si $S = $ Spec$(R)$, cela signifie que le faisceau conormal $\omega_A$ est isomorphe à $St \otimes_{K} R$ comme $B \otimes_\mathbb{Q} R$-module, où $St$ est défini par $St = \oplus_{i=1}^h St_i$, et $St_i$ est le $B_i$-module égal à
$$\bigoplus_{\tau \in \Sigma_i} (K^{a_{\tau}})^n \oplus (K^{b_{\tau}})^n$$
où $\Sigma_i = $Hom$(F_{0,i},\overline{\mathbb{Q}}_p)$, et où $B_i = $ M$_n (F_{0,i}) \oplus $M$_n (F_{0,i})$ agit par l'action standard donnée par $\tau$ sur chacun des facteurs dans le cas $1$ ; et
$$\bigoplus_{\tau \in \Sigma_i} (K^{a_{\tau}})^n \oplus (K^{b_{\tau}})^n$$
où l'action standard de  M$_n (F_i)$ est donnée par $\sigma$ sur le premier facteur, et par $\overline{\sigma}$ sur le deuxième (avec $\sigma$ et $\overline{\sigma}$ les deux plongements de $F_i$ au-dessus de $\tau$) dans les cas $2$ et $3$.

\begin{rema} 
Le schéma $X$ est en fait défini sur le corps réflexe, mais nous aurons besoin d'élargir le corps de base pour définir certains faisceaux ultérieurement.
\end{rema}

Pour définir les formes surconvergentes, nous aurons besoin de supposer que le lieu ordinaire de la variété de Shimura est non-vide. Dans le cas où $p$ est non ramifié dans $F$, un résultat de Wedhorn (\cite{We}) dit que cela est le cas si et seulement si $p$ est totalement décomposé dans le corps réflexe $E$. Si le corps $F$ est fixé, cela donne une condition sur les nombres $(a_\sigma, b_\sigma)$. Ainsi, si on se place dans le cas le plus simple où $F_0 = \mathbb{Q}$, $B=F$, on considère la variété associée au groupe $GU(a,b)$. Le corps réflexe est égal à $\mathbb{Q}$ si $a=b$ et $F$ sinon. L'existence du lieu ordinaire est alors équivalente à $p$ décomposé dans $F$ ou $a=b$. Nous allons obtenir des conditions nécessaires sur les couples $(a_\sigma, b_\sigma)$ dans le cas général. 

\begin{prop}
Supposons que le lieu ordinaire soit non vide. Soit $1 \leq i \leq h$, et supposons que $\pi_i$ soit dans le cas $1$. Alors il existe des entiers $a_i$ et $b_i$ tels que $(a_\sigma,b_\sigma) = (a_i,b_i)$ pour tout $\sigma \in \Sigma_i$. Si $\pi_i$ est dans le cas $2$ ou $3$, alors $a_\sigma = b_\sigma = (a+b)/2$ pour tout $\sigma \in \Sigma_i$.
\end{prop}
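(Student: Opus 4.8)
The strategy is to translate the non-emptiness of the ordinary locus into a combinatorial condition on the signatures $(a_\sigma, b_\sigma)$ via the theory of Newton and Hodge polygons. The ordinary locus is non-empty precisely when the Hodge polygon of the $p$-divisible group $A[\pi_i^\infty]$ (with its $O_{B_i}$-action) coincides with the ordinary Newton polygon, i.e.\ when the Newton polygon attains its lowest possible position, namely the Hodge polygon itself. The first step is therefore to compute, factor by factor, the Hodge polygon attached to the $O_{B_i}$-module $St_i$, reading off the slopes from the data $(a_\tau, b_\tau)_{\tau \in \Sigma_i}$. Since the action of $O_{B_i}$ decomposes $\omega_A$ along the embeddings $\tau \in \Sigma_i$ into pieces of dimensions $na_\tau$ and $nb_\tau$, the Hodge polygon is entirely governed by the multiset $\{(a_\tau, b_\tau)\}_{\tau \in \Sigma_i}$.

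In the split case (Cas $1$), $B_i \simeq M_n(F_{0,i}) \oplus M_n(F_{0,i})$, and after Morita the $p$-divisible group becomes (a power of) a Barsotti--Tate group with $O_{F_{0,i}}$-action whose two halves are dual to one another. Here I expect the ordinarity constraint to force the Hodge polygon to be \emph{symmetric} in a way compatible with the Frobenius permutation of the embeddings in $\Sigma_i$. Because Frobenius acts transitively on $\Sigma_i$ (the residue extension is a field of degree $f_i$), the requirement that the Newton polygon be ordinary and $O_{F_{0,i}}$-isotypic forces the individual Hodge slopes contributed by each $\tau$ to be equal across the orbit; concretely this yields $(a_\tau, b_\tau) = (a_i, b_i)$ constant on $\Sigma_i$. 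In the inert and ramified cases (Cas $2$ and $3$), the extra involution coming from $[F:F_0]=2$ relates the embedding $\sigma$ above $\tau$ to its conjugate $\overline{\sigma}$, and the polarization forces a self-duality that pairs the $a_\tau$-slope part against the $b_\tau$-slope part within a single $\tau$. The only way an ordinary (hence multiplicative-\'etale, slopes $0$ and $1$ only) structure can be self-dual under this conjugation-swap is for the two dimensions to coincide, giving $a_\sigma = b_\sigma = (a+b)/2$.

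Concretely I would argue as follows. First, reduce via Morita equivalence to the case $n=1$, exactly as in the remark following the definition of the Hecke operators and in Proposition~\ref{augmente}, so that we work with $O_{F_{0,i}}$- or $O_{F_i}$-modules rather than matrix algebras. Second, invoke the Wedhorn-type criterion recalled in the text: non-emptiness of the ordinary locus is equivalent to the Hodge and Newton polygons agreeing, which for a $\mu$-ordinary / ordinary situation is a purely numerical condition on the slope multiplicities indexed by $\Sigma_i$. Third, use the transitivity of the Frobenius action on $\Sigma_i$ together with the compatibility of the Hodge filtration with this action to deduce that the slope data is Frobenius-invariant, forcing $(a_\tau,b_\tau)$ to be constant in the split case. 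Fourth, in the inert and ramified cases, combine this Frobenius-invariance with the duality induced by $\lambda$ and the nontrivial Galois action of $F/F_0$ to obtain $a_\sigma = b_\sigma$.

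The main obstacle will be step three and four: making precise the exact sense in which ordinarity is an equality of polygons in the presence of ramification (where the ``ordinary'' locus must be understood as the $\mu$-ordinary locus, and the relevant polygon is a $\sigma$-weighted Hodge polygon over the places of $F_{0,i}$), and checking that the self-duality coming from the polarization interacts with the conjugation on $\Sigma_i$ in precisely the way that forces $a_\sigma = b_\sigma$ rather than merely some weaker symmetry. In particular, one must be careful that in Cas $3$ the ramification $\pi_i = \varpi_i^2$ does not introduce an asymmetry that would permit $a_\sigma \neq b_\sigma$; the key point is that the residue field of $F_i$ equals that of $F_{0,i}$, so the conjugation fixes each element of $\Sigma_i$ and therefore imposes the constraint embedding-by-embedding. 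Once these polygon computations are set up correctly the conclusion is essentially forced, but the bookkeeping over the three cases is where the real content lies.
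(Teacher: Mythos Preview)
Your approach via Newton and Hodge polygons can probably be made to work, but the paper takes a much more elementary and direct route that avoids all of this machinery. Rather than invoking Wedhorn's criterion or any polygon comparison, the paper simply uses the structure theory of ordinary Barsotti--Tate groups with $O_F$-action. Given a point in the ordinary locus, the associated semi-abelian scheme $A_0$ has $A_0[p^\infty]$ ordinary, i.e.\ an extension of an \'etale part by a multiplicative part. In case~$1$, the piece $A_0[(\pi_i^+)^\infty]$ is then an ordinary Barsotti--Tate group with $O_{F_{0,i}}$-action; since both the multiplicative and the \'etale parts are $O_{F_{0,i}}$-stable, it must be of the form
\[
(\mu_{p^\infty}\otimes_{\mathbb{Z}_p} O_{F_{0,i}})^{N_1}\times (\mathbb{Q}_p/\mathbb{Z}_p\otimes_{\mathbb{Z}_p} O_{F_{0,i}})^{N_2}.
\]
Reading off the conormal module then shows that the $\sigma$-component has dimension $N_1$ for \emph{every} $\sigma\in\Sigma_i$, whence $a_\sigma$ is constant. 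Cases~$2$ and~$3$ are identical with $O_{F_i}$ in place of $O_{F_{0,i}}$: here both embeddings $\sigma,\overline{\sigma}$ of $F_i$ above a given $\tau\in\Sigma_i$ see the same conormal dimension $N_1$, which forces $a_\sigma=b_\sigma$. No polygon computation, no Frobenius-orbit bookkeeping, and no separate treatment of ramification is required.

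There is also a genuine gap in your step three. You invoke ``transitivity of the Frobenius action on $\Sigma_i$'', but $\Sigma_i$ has cardinality $e_if_i$, and Frobenius only permutes the $f_i$ embeddings of the maximal unramified subextension $F_{0,i}^{nr}$; it does not act transitively on the full set $\Sigma_i$ when $e_i>1$, nor does it act in any obvious way on the $e_i$ embeddings sitting over a fixed unramified embedding. Your argument as written would therefore only yield constancy along Frobenius orbits of size $f_i$, not across all of $\Sigma_i$. You would need an additional input to handle the ramified direction---and you flag exactly this as your ``main obstacle''. The paper's argument sidesteps this completely, because $\omega_{\mu_{p^\infty}\otimes O_{F_{0,i}}}\simeq O_{F_{0,i}}$ is already free of rank one in every $\sigma$-component simultaneously, ramified or not.
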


\begin{proof}
Supposons l'existence du lieu ordinaire. Cela veut dire qu'il existe une variété abélienne $A$ sur une extension $L$ de $\mathbb{Q}_p$, qui s'étend en un schéma semi-abélien $A_0$ (sur l'anneau des entiers d'une extension finie de $L$), tel que $A_0[p^\infty]$ soit un groupe de Barsotti-Tate ordinaire (c'est-à-dire extension d'une partie multiplicative et d'une partie étale). En particulier, pour tout $1 \leq i \leq h$, $A_0[\pi_i^\infty]$ est ordinaire. Supposons que $\pi_i$ est dans le cas $1$, et soit $\pi_i^+$ une place de $F$ au-dessus de $\pi_i$. Le groupe de Barsotti-Tate $A_0[(\pi_i^+)^\infty]$ est ordinaire, et munie d'une action de $O_{F_{0,i}}$. On en déduit qu'il existe des entiers $N_1$ et $N_2$ tels que 
$$A_0[(\pi_i^+)^\infty] = (\mu_{p^\infty} \otimes_{\mathbb{Z}_p} O_{F_{0,i}} ) ^{N_1} \times (\mathbb{Q}_p / \mathbb{Z}_p \otimes_{\mathbb{Z}_p} O_{F_{0,i}} ) ^{N_2}$$
En particulier, si on ordonne le couple $(a_\sigma, b_\sigma)$ tels que $a_\sigma$ corresponde au plongement au-dessus de $\pi_i^+$, on voit que $a_\sigma$ ne dépend pas de $\sigma \in \Sigma_i$. De même pour $b_\sigma$. Il existe donc un couple d'entiers $(a_i,b_i)$ tels que $(a_\sigma, b_\sigma) = (a_i, b_i)$ pour tout $\sigma \in \Sigma_i$.
Dans le cas $2$ ou $3$, si $O_{F_i}$ désigne l'anneau des entiers de $F_i$, on voit de même qu'il existe des entiers $N_1$ et $N_2$ tels que 
$$A_0[\pi_i^\infty] = (\mu_{p^\infty} \otimes_{\mathbb{Z}_p} O_{F_i} ) ^{N_1} \times (\mathbb{Q}_p / \mathbb{Z}_p \otimes_{\mathbb{Z}_p} O_{F_i} ) ^{N_2}$$
(en fait $N_1 = N_2$ par auto-dualité). On en déduit que pour tout $\sigma \in \Sigma_i$, $a_\sigma = b_\sigma = (a+b)/2$.
\end{proof}

Définissons maintenant la structure de niveau Iwahorique. Si $\pi_i$ est dans le cas $1$, on notera $\pi_i^+$ et $\pi_i^-$ les idéaux de $F$ au-dessus de $\pi_i$, et $(a_i,b_i)$ le couple $(a_\sigma,b_\sigma)$. Si $\pi_i$ est dans le cas $2$ ou $3$, on notera également $a_i =b_i= (a+b)/2$, de telle sorte que l'on ait encore $a_\sigma = a_i$ et $b_\sigma = b_i$ pour tout $\sigma \in \Sigma_i$.

\begin{defi}
Soit $X_{Iw}$ l'espace de modules sur $K$ dont les $S$-points sont les $(A,\lambda,\iota,\eta,H_{i,j})$ où $(A,\lambda,\iota,\eta) \in X(S)$ et
\begin{itemize}
\item Si $\pi_i$ est dans le cas $1$, $\left (0=H_{i,0} \subset H_{i,1} \subset \dots \subset H_{i,a+b} =A[\pi_i^+] \right )$ est un drapeau de sous-groupes finis et plats de $A[\pi_i^+]$, stables par $O_B$, chaque $H_{i,j}$ étant de hauteur $nf_i j$.
\item Si $\pi_i$ est dans le cas $2$, $\left (0=H_{i,0} \subset H_{i,1} \subset \dots \subset H_{i,(a+b)/2} \right )$ est un drapeau de sous-groupes finis et plats de $A[\pi_i]$, stables par $O_B$ et totalement isotropes, chaque $H_{i,j}$ étant de hauteur $2nf_i j$.
\item Si $\pi_i$ est dans le cas $3$, $\left (0=H_{i,0} \subset H_{i,1} \subset \dots \subset H_{i,(a+b)/2} \right )$ est un drapeau de sous-groupes finis et plats de $A[\pi_i]$, stables par $O_B$ et totalement isotropes, chaque $H_{i,j}$ isomorphe localement pour la topologie étale à $(O_B / \pi_i O_B)^j$.
\end{itemize}
\end{defi}

\begin{rema}
Dans le cas $1$, les sous-groupes $A[\pi_i^+]$ et $A[\pi_i^-]$ de $A[\pi_i]$ sont duaux pour l'accouplement de Weil. Si $H$ est un sous-groupe de $A[\pi_i^+]$, alors son orthogonal $H^\bot$ est un sous-groupe de $A[\pi_i^-]$ isomorphe au dual de Cartier de $A[\pi_i^+] / H$.
\end{rema}

\begin{rema}
Dans le cas $3$, le groupe $A[\varpi_i]$ est totalement isotrope, ce qui justifie le fait que l'on travaille avec $A[\pi_i]$ plutôt que $A[\varpi_i]$.
\end{rema}

On notera $\overline{X}$ et $\overline{X}_{Iw}$ des compactifications toroïdales de $X$ et $X_{Iw}$ construites par exemples dans \cite{Pin}. Soient également $X^{an}$, $X_{Iw}^{an}$, $\overline{X}^{an}$ et $\overline{X}_{Iw}^{an}$ les espaces analytiques associés.

\subsection{Formes modulaires et opérateurs de Hecke}

Soit $A$ le schéma semi-abélien universel sur $\overline{X}$, et soit $\omega_A = e^* \Omega_{A/\overline{X}}^1$ le faisceau conormal relatif à la section unité de $A$ ; il est localement pour la topologie de Zariski isomorphe à $St \otimes \mathcal{O}_{\overline{X}}$ comme $B \otimes \mathcal{O}_{\overline{X}}$-module. Rappelons que $St = \oplus_{i=1}^h St_i$, et $St_i$ est le $B_i \otimes_{\mathbb{Q}} K$-module égal à $\oplus_{\tau \in \Sigma_i} (K^{a_{\tau}})^n \oplus (K^{b_{\tau}})^n$. \\
Soit $\mathcal{T} = $Isom$_{B \otimes \mathcal{O}_{\overline{X}}} (St \otimes \mathcal{O}_{\overline{X}}, \omega_A)$. C'est un torseur sur $\overline{X}$ sous le groupe
$$M=\left ( \prod_{i=1}^{h} \prod_{\tau \in \Sigma_i}  {\text{GL}_{a_\tau}} {\times \text{GL}_{b_\tau}} \right ) \times_{\mathbb{Q}_p} K$$

Soit $T_M$ le tore diagonal de $M$, $B_M$ son Borel supérieur, et $U_M$ le radical unipotent. Soit $X(T_M)$ le groupe des caractères de $T_M$, et $X(T_M)^+$ le cône des poids dominants pour $B_M$. Si $\kappa \in X(T_M)^+$, on note $\kappa'=- w_0 \kappa \in X(T_M)^+$, où $w_0$ est l'élément le plus long du groupe de Weyl de $M$ relativement à $T_M$. \\
Soit $\phi : \mathcal{T} \to \overline{X}$ le morphisme de projection.

\begin{defi}
Soit $\kappa \in X(T_M)^+$. Le faisceau des formes modulaires de poids $\kappa$ est $\omega^\kappa =~\phi_* O_\mathcal{T}[\kappa']$, où $\phi_* O_\mathcal{T}[\kappa']$ est le sous-faisceau de $\phi_* O_\mathcal{T}$ où $B_M=T_M U_M$ agit par $\kappa'$ sur $T_M$ et trivialement sur $U_M$.
\end{defi} 

Une forme modulaire de poids $\kappa$ sur $\overline{X}$ est donc une section globale de $\omega^\kappa$, soit un élément de $H^0(\overline{X} , \omega^\kappa)$. En utilisant la projection $\overline{X}_{Iw} \to \overline{X}$, on définit de même le faisceau $\omega^\kappa$ sur $\overline{X}_{Iw}$, ainsi que les formes modulaires sur $\overline{X}_{Iw}$. On notera encore $\omega^\kappa$ le faisceau analytifié sur $\overline{X}_{Iw}^{an}$. \\
$ $\\
Définissons maintenant les opérateurs de Hecke. Soit $1 \leq i \leq h$, et $C_i$ l'espace de modules sur $K$ dont les $S$-points sont les $(A,\lambda,\iota,\eta,H_{j,k},L)$ avec $(A,\lambda,\iota,\eta,H_{j,k}) \in X_{Iw}(S)$ et $L$ un sous-groupe fini et plat stable par $O_B$ de $A[\pi_i]$ vérifiant
\begin{itemize}
\item $L=L_0 \oplus L_0^\bot$, où $L_0$ est un supplémentaire de $H_{i,a_i}$ dans $A[\pi_i^+]$ dans le cas $1$. 
\item $L$ est totalement isotrope, et est un supplémentaire de $H_{i,a_i}$ dans $A[\pi_i]$ dans le cas $2$ ou $3$.
\end{itemize}
Nous avons deux morphismes finis étales $p_1, p_2 : C_i \to X_{Iw}$ : $p_1$ est l'oubli de $L$, et $p_2$ est le quotient par $L$. Plus précisément :
\begin{itemize}
\item dans le cas $1$, l'image de de $(A,\lambda,\iota,\eta,H_{j,k},L=L_0 \oplus L_0^\bot)$ est $(A/L,\lambda',\iota',\eta',H_{j,k}')$, où $H_{j,k}'$ est l'image de $H_{j,k}$ dans $A/L$ si $j \neq i$ ou $j=i$ et $k \leq a_i$, et $H_{i,k}'$ est l'image de $(\pi_i^+)^{-1} (H_{i,k} \cap L_0)$ dans $A/L$ si $k > a_i$.
\item dans le cas $2$ ou $3$, l'image de $(A,\lambda,\iota,\eta,H_{j,k},L)$ est $(A/L,\lambda',\iota',\eta',H_{j,k}')$ où $H_{j,k}'$ est l'image de $H_{j,k}$ dans $A/L$.
\end{itemize}
Comme dans le cas C, une ambigu\"ité existe pour la polarisation sur $A/L$. Soit $\xi_i$ un élément totalement positif de l'anneau des entiers de $F_0$, avec $v_{\pi_i} (\xi_i) = 1$ et $v_{\pi_j} (\xi_i) = 0$ si $j \neq i$. On définit la polarisation sur $A/L$ comme la polarisation descendue $\xi_i \cdot \lambda$. Il s'agit bien d'une polarisation de degré premier à $p$. Le morphisme $p_2$ dépend donc du choix d'un tel élément $\xi_i$, mais ce choix n'est pas important en pratique (voir \cite{Bi} remarque $2.3.2$). Remarquons que si $p$ est inerte dans $F_0$, on peut choisir $\xi_i = p$. \\
Soit $C_i^{an}$ l'espace analytique associé à $C_i$ ; on note encore $p_1$, $p_2 : C_i^{an} \to X_{Iw}^{an}$ les morphismes (finis étales) induits. Il existe une compactification toroïdale $\overline{C}_i$ de $C_i$, et on peut supposer (par le théorème $\ref{morphcompact}$) que les morphismes $p_1, p_2 : C_i \to X_{Iw}$ s'étendent en des morphismes $\overline{C}_i \to \overline{X}_{Iw}$. Si on note $\overline{C}_i^{an}$ l'espace rigide analytique associé à $\overline{C}_i$, on obtient des morphismes $p_1, p_2 : \overline{C}_i^{an} \to \overline{X}_{Iw}^{an}$. 

\begin{defi}
L'opérateur de Hecke géométrique agissant sur $\overline{X}_{Iw}^{an}$ est défini par $U_{\pi_i} (S) := p_2(p_1^{-1}(S))$ pour toute partie $S$ de $\overline{X}_{Iw}^{an}$. 
\end{defi} 

Cet opérateur respecte les ouverts de $X_{Iw}^{an}$, mais pas ceux de $\overline{X}_{Iw}^{an}$ en général. Notons $q : A \to A/L$ l'isogénie universelle au-dessus de $C_i$. Celle-ci induit un isomorphisme $q^* : \omega_{A/L} \to \omega_{A}$, et donc un morphisme $q^* (\kappa) : p_2^* \omega^\kappa \to~p_1^* \omega^\kappa$. Pour tout ouvert $\mathcal{U}$ de $X_{Iw}^{an}$, nous pouvons donc former le morphisme composé
\begin{displaymath}
\widetilde{U}_{\pi_i} :   H^0(U_{\pi_i}(\mathcal{U}),\omega^\kappa) \to H^0 ( p_1^{-1} (\mathcal{U}), p_2^* \omega^\kappa) \overset{q^*(\kappa)}{\to} H^0(p_1^{-1}(\mathcal{U}) , p_1^* \omega^\kappa) \overset{Tr_{p_1}}{\to}  H^0(\mathcal{U},\omega^\kappa)
\end{displaymath}

\begin{defi}
L'opérateur de Hecke agissant sur les formes modulaires est alors défini par $U_{\pi_i} = \frac{1}{p^{N_i}} \widetilde{U}_{\pi_i}$ avec $N_i=f_i a_i b_i$. 
\end{defi}

Nous avons donc défini $h$ opérateurs agissant sur les formes modulaires définies sur un ouvert de $X_{Iw}^{an}$. De même que dans le cas précédent, comme ces opérateurs sont bornés, ils agissent sur l'espace $H^0 (\overline{X}_{Iw}^{an}, \omega^\kappa)$. En effet, l'image par $U_{\pi_i}$ d'une telle section sera bornée, et s'étendra automatiquement au bord d'après le théorème $\ref{extension}$.

\subsection{Structures entières} \label{hecke_cas_A}

Nous allons maintenant définir les structures entières sur $\overline{X}_{Iw}^{an}$, c'est-à-dire les fonctions degré, et une norme pour l'espace des formes modulaires. Nous gardons les notations des parties précédentes. \\
Soit $h_i$ l'entier tel que $nh_i$ soit la hauteur de $H_{i,a_i}$ ; on a donc $h_i = f_i a_i$ dans le cas $1$ et $h_i = 2f_i a_i$ dans les cas $2$ et $3$. Soit $\mathcal{A}_{nd(a+b),h_i}$ l'espace de Siegel analogue à celui de la définition \ref{siegel}, mais en demandant que la hauteur de $H$ soit égale à $nh_i$. On dispose d'un morphisme  $\mathcal{P}_i : X_{Iw} \to \mathcal{A}_{nd(a+b),h_i} \times K $ par la formule $(A,\lambda,\iota,\eta,H_{j,k}) \to (A,\lambda,\eta,H_{i,a_i})$. De plus, il existe une compactification toroïdale $\overline{\mathcal{A}}_{nd(a+b),h_i}$ de $\mathcal{A}_{nd(a+b),h_i}$, et quitte à restreindre les décompositions polyhédrales utilisées pour construire les compactifications toroïdales, ces morphismes s'étendent en $\mathcal{P}_i : \overline{X}_{Iw} \to \overline{\mathcal{A}}_{nd(a+b),h_i} \times K$ par le théorème $\ref{morphcompact}$. \\
Soit $\overline{\mathcal{A}}_{nd(a+b),h_i}^{rig}$ l'espace rigide associé à $\overline{\mathcal{A}}_{nd(a+b),h_i} \times_{\mathbb{Z}_p} O_K$, où $O_K$ est l'anneau des entiers de $K$. Comme ce dernier schéma est propre, cet espace rigide est égal à l'espace analytique associé à $\overline{\mathcal{A}}_{nd(a+b),h_i} \times K$. Les morphismes $\mathcal{P}_i$ induisent des morphismes $\overline{X}_{Iw}^{an} \to \overline{\mathcal{A}}_{nd(a+b),h_i}^{rig}$. Rappelons que nous avons défini une fonction deg $: \overline{\mathcal{A}}_{nd(a+b),h_i}^{rig} \to [0,h_i]$.

\begin{defi}
On définit la fonction Deg$_i : \overline{X}_{Iw}^{an} \to [0,f_i a_i]$ par la formule $ x \to $ deg $\mathcal{P}_i ( x)$ dans le cas $1$ et $ x \to \frac{1}{2}$ deg $\mathcal{P}_i ( x)$ dans les cas $2$ et $3$. La fonction degré Deg : $\overline{X}_{Iw}^{an} \to \prod_{i=1}^h [0,f_i a_i]$ est définie par $x \to ($Deg$_i (x))_i$.
\end{defi}

\begin{rema} \label{part}
Le fait de diviser par $2$ le degré de $H_{i,a_i}$ dans les cas $2$ et $3$ est justifié par le fait suivant. Supposons que $A$ soit une variété abélienne avec bonne réduction sur $O_{L}$, avec $L$ une extension finie de $\mathbb{Q}_p$. Supposons pour simplifier que $B=F$, et que $\pi_i$ est dans le cas $2$. Alors le groupe $H_{i,a_i}$ est un schéma en groupe fini et plat sur $O_L$, muni d'une action de $O_{F_i^{nr}}$ où $F_i^{nr}$ est l'extension maximale non ramifiée contenue dans $F_i$. Soit $F_{0,i}$ la complétion de $F_0$ en $\pi_i$, et $F_{0,i}^{nr}$ l'extension maximale non ramifiée contenue dans $F_{0,i}$. Par hypothèse, $F_i^{nr}$ est une extension de degré $2$ de $F_{0,i}^{nr}$. Soit $S_0$ (resp. $S$) l'ensemble des plongements de $F_{0,i}^{nr}$ (resp. $F_i^{nr}$) dans $\overline{\mathbb{Q}}_p$. Nous avons défini dans la partie $\ref{partial}$ les degrés partiels deg$_s H_{i,a_i}$ pour tout $s \in S$. Le groupe $H_{i,a_i}$ étant égal à son orthogonal dans $A[\pi_i]$, on a $H_{i,a_i} \simeq (A[\pi_i] / H_{i,a_i})^{D,c}$, où $^D$ signifie le dual de Cartier, et $^c$ que l'action de $O_{F}$ est obtenue par conjugaison (cela résulte de la compatibilité entre l'action de $O_F$ et l'involution de Rosati). Si $s_0$ est un élément de $S_0$, et si $s$ et $\overline{s}$ sont les deux éléments de $S$ au-dessus de $s_0$, alors la proposition $\ref{dual}$ montre que 
$$\text{deg}_s H_{i,a_i} = \text{deg}_{\overline{s}} H_{i,a_i}$$
La quantité pertinente pour étudier le sous-groupe $H_{i,a_i}$ n'est donc pas son degré, mais la moitié de celui-ci. \\
Remarquons enfin que dans le cas $1$, le groupe $H_{i,a_i}^\bot$, qui est un sous-groupe de $A[\pi_i^-]$ a pour degré $f_i(b_i - a_i) + $ deg $H_{i,a_i}$. Dans le cas où $a_i=b_i$, le groupe $H_{i,a_i} \oplus H_{i,a_i}^\bot$, qui est totalement isotrope, a pour degré $2$ deg $H_{i,a_i}$. \\
Une autre justification pour cette définition est de considérer le cas $F_0 = \mathbb{Q}$ et $B=F$ est centrale ($F$ est donc un corps quadratique imaginaire). On considère donc le groupe $GU(a,b)$. Le cas $a=b=1$ correspond au cas de la courbe modulaire ; au niveau des espaces de modules, cela s'interprète par le fait que tout schéma abélien apparaissant dans la variété unitaire s'écrit comme $E \otimes_{\mathbb{Z}} O_F$, où $E$ est une courbe elliptique. Or on sait que la quantité pertinente pour étudier la courbe modulaire de niveau Iwahorique en $p$ est le degré du sous-groupe universel, qui est compris entre $0$ et $1$. Nous devons donc retrouver cette quantité pour la variété unitaire, ce qui justifie notre définition. 
\end{rema}

Si $I= \prod_{k=1}^h I_k$ est un produit d'intervalles, on note $\overline{X}_{Iw,I} = $ Deg$^{-1} (I)$. Le lieu ordinaire-multiplicatif $\overline{X}_{Iw}^{mult}$ correspond au lieu où tous les degrés sont maximaux, c'est à dire à $\overline{X}_{Iw,I}$ avec $I= \prod_{i=1}^h \{f_i a_i\}$. Par hypothèse, nous nous plaçons dans le cas où ce lieu est non vide.

\begin{defi}
L'espace des formes modulaires surconvergentes est défini par
$$H^0(\overline{X}_{Iw}^{an},\omega^\kappa)^\dagger := \text{colim}_\mathcal{V} H^0 (\mathcal{V},\omega^\kappa)$$
où la colimite est prise sur les voisinages stricts $\mathcal{V}$ de $\overline{X}_{Iw}^{mult}$ dans $\overline{X}_{Iw}^{an}$.
\end{defi}

Une forme modulaire surconvergente est donc définie sur un espace du type $\overline{X}_{Iw,I}$ avec $I=~\prod_{i=1}^h [f_i a_i -~ \varepsilon, f_i a_i]$, pour un certain $\varepsilon >0$. \\
Nous avons des propriétés analogues quant au comportement des opérateurs de Hecke relativement à la fonction Degré. 

\begin{prop}
Soit $1 \leq i \leq h$, $x \in \overline{X}_{Iw}^{an}$ et $y \in U_{\pi_i} (x)$. Soit $x_j= Deg_j(x)$, et $y_j=Deg_j(y)$ pour $1 \leq j \leq h$. Alors
\begin{itemize}
\item $y_j=x_j$ pour $j \neq i$.
\item $y_i \geq x_i$
\end{itemize}
De plus, s'il existe $y \in U_{\pi_i}^{2e_i} (x)$ avec $Deg_i(y)=Deg_i(x)$, alors $x_i \in \frac{1}{e_i} \mathbb{Z}$ dans les cas $1$ et $2$, et $x_i \in \frac{1}{2e_i} \mathbb{Z}$ dans le cas $3$.
\end{prop}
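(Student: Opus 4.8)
The plan is to reproduce the proof of Proposition~\ref{augmente} essentially verbatim, inserting the distinction between the three decompositions of $\pi_i$ in $F$ only at the point where the arithmetic of the local field enters. What makes this possible is a single uniform observation: since $\pi_i$ is a prime of $F_0$ of ramification index $e_i$ over $\mathbb{Q}_p$, the element $\pi_i^{e_i}$ generates the ideal $(p)$ in all three cases (in case~$3$ this reads $\pi_i^{e_i} = \varpi_i^{2e_i}$, $\varpi_i$ having ramification index $2e_i$).

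For the first two points I would argue exactly as in type $(C)$. Over $x$ there is a semi-abelian variety $A$ with $O_B$-action over a finite extension $M$ of $\mathbb{Q}_p$, and the datum over $y$ is obtained by quotienting by a subgroup $L \subset A[\pi_i]$ of the prescribed shape, namely $L = L_0 \oplus L_0^\bot$ in case~$1$ and $L$ totally isotropic in cases~$2$ and~$3$. Spreading $A$ out to a semi-abelian scheme $A_0$ over $O_M$ and writing it, via Mumford's construction, as the quotient of a global extension $\widetilde{G}$ of a torus by an abelian scheme, I set $\widetilde{H}_{j,k} = \widetilde{G}[p] \cap H_{j,k,0}$ and $\widetilde{L} = \widetilde{G}[p] \cap L_0$. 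Because $L$ is supported at the place $\pi_i$, the quasi-finite flat groups computing $Deg_j$ for $j \neq i$ are unchanged, so $y_j = x_j$; and for $j = i$ the monotonicity and subadditivity of the degree, applied to the two subgroups of the $\pi_i$-torsion of $\widetilde{G}$, give
$$\deg \widetilde{H}_{i,a_i} + \deg \widetilde{L} \leq \deg(\widetilde{H}_{i,a_i} + \widetilde{L}) \leq \deg \widetilde{G}[\pi_i],$$
whence $y_i \geq x_i$. The factor $\tfrac{1}{2}$ in the definition of $Deg_i$ in cases~$2$ and~$3$ multiplies both sides and is immaterial here.

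For the integrality, suppose $y \in U_{\pi_i}^{2e_i}(x)$ with $Deg_i(y) = Deg_i(x)$, so that $L \subset A[\pi_i^{2e_i}]$. I first reduce to $n = 1$ by Morita equivalence: the relevant summand of $A[\pi_i^\infty]$ (namely $A[(\pi_i^+)^\infty]$ with its $O_{F_{0,i}}$-action in case~$1$, and $A[\pi_i^\infty]$ with its $O_{F_i}$-action in cases~$2$ and~$3$) becomes a Barsotti-Tate group $G_i$, and $H_{i,a_i}$ a subgroup of $G_i[\pi_i]$. I then run the filtration argument of Proposition~\ref{augmente}: with $Fil_k = L[\pi_i^k]$, $\widetilde{Fil}_k = L_0[\pi_i^k] \cap \widetilde{G}[p^2]$ and the chain $\widetilde{H}^{(k)} = (\widetilde{G}/\widetilde{Fil}_{k-1})[\pi_i]/\widetilde{Gr}_k$, the equality $Deg_i(y) = Deg_i(x)$ forces every intermediate inequality to be an equality, hence the splittings $(\widetilde{G}/\widetilde{Fil}_k)[\pi_i] = \widetilde{H}^{(k)} \oplus \widetilde{Gr}_{k+1}$ and the additivity $\deg \widetilde{Fil}_{k+l} = \deg \widetilde{Fil}_k + \deg \widetilde{Fil}_l$. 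Applying this for $k = l = e_i$ and using $\pi_i^{e_i} = (p)$, I find that $\widetilde{L}$ is a truncated Barsotti-Tate group of level $2$, so $\deg \widetilde{Fil}_{e_i}$ is an integer; the same relation as in type $(C)$, namely $\deg \mathcal{P}_i(x) = c - \tfrac{1}{e_i}\deg \widetilde{Fil}_{e_i}$ with $c \in \mathbb{Z}$, then gives $\deg \mathcal{P}_i(x) \in \tfrac{1}{e_i}\mathbb{Z}$.

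It remains to pass from the raw degree $\deg \mathcal{P}_i(x)$ to $Deg_i(x)$, and this is where the cases genuinely diverge. In case~$1$ one has $Deg_i = \deg \mathcal{P}_i$, so $Deg_i(x) \in \tfrac{1}{e_i}\mathbb{Z}$ at once. In case~$2$ one has $Deg_i = \tfrac{1}{2}\deg \mathcal{P}_i$, but $\pi_i$ being inert makes $F_i^{nr}/F_{0,i}^{nr}$ unramified of degree $2$; the self-duality $H_{i,a_i} = H_{i,a_i}^\bot$ then forces the two partial degrees above each embedding of $F_{0,i}^{nr}$ to be equal (Remark~\ref{part} and Proposition~\ref{dual}), so $\deg \mathcal{P}_i(x) \in \tfrac{2}{e_i}\mathbb{Z}$ and again $Deg_i(x) \in \tfrac{1}{e_i}\mathbb{Z}$. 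In case~$3$, however, $\pi_i = \varpi_i^2$ is ramified, so $F_i^{nr} = F_{0,i}^{nr}$ and no pairing of distinct embeddings is available; the factor $\tfrac{1}{2}$ is not absorbed, and one only gets $Deg_i(x) \in \tfrac{1}{2e_i}\mathbb{Z}$. I expect case~$3$ to be the main obstacle: one must check, using the partial degrees of the appendix (Section~\ref{partial}), that in the ramified situation the self-duality contributes exactly the factor already recorded in the definition of $Deg_i$ and nothing more, so that the denominator is precisely $2e_i$, neither $e_i$ nor $4e_i$.
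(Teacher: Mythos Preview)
Your approach is the paper's: invoke the type~$(C)$ argument (Proposition~\ref{augmente}) for the first two bullets and for the filtration showing $\widetilde{L}$ is truncated Barsotti--Tate of level~$2$, then split into cases only at the very end. Cases~$1$ and~$3$ go through essentially as you describe, and as the paper does.

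The gap is in case~$2$. You apply the pairing $\deg_s = \deg_{\bar s}$ from Remark~\ref{part} to $\widetilde{H}_{i,a_i}$ and conclude $\deg \mathcal{P}_i(x) \in \tfrac{2}{e_i}\mathbb{Z}$, but this does not follow: the pairing only gives $\deg \mathcal{P}_i(x) = 2D$ with $D = \sum_{s_0} \deg_{s(s_0)} \widetilde{H}_{i,a_i}$, and you have no integrality control on $D$ --- the BT$_1$ property was established for $L_1 = \widetilde{Fil}_{e_i}$, not for $\widetilde{H}_{i,a_i}$, so the partial degrees of $\widetilde{H}_{i,a_i}$ need not lie in $\tfrac{1}{e_i}\mathbb{Z}$. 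Knowing $\deg \mathcal{P}_i(x) \in \tfrac{1}{e_i}\mathbb{Z}$ and $\deg \mathcal{P}_i(x) = 2D$ is circular for proving the parity you want. The paper applies the same duality argument to $L_1$ instead: $L$ is itself Lagrangian in $A[\pi_i]$ (totally isotropic complement of the Lagrangian $H_{i,a_i}$), so the relation $\deg_s L_1 = \deg_{\bar s} L_1$ holds just as in Remark~\ref{part}; and since $L_1$ is BT$_1$, each $\deg_s L_1$ is already an \emph{integer}, whence $\deg L_1 \in 2\mathbb{Z}$. It is this parity of $\deg L_1$ --- the integer being divided by $e_i$ in the formula for $Deg_i(x)$ --- that yields $Deg_i(x) \in \tfrac{1}{e_i}\mathbb{Z}$. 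The fix is thus simply to move your pairing argument from $\widetilde{H}_{i,a_i}$ to $L_1$.
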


\begin{proof}
Nous raisonnons comme dans la démonstration de la proposition $\ref{augmente}$. Le premier point est identique. Pour le deuxième point, supposons qu'il existe $y \in U_{\pi_i}^{2 e_i} (x)$ avec $Deg_i(y) = Deg_i(x)$. Soit $A$ la variété semi-abélienne associée à $x$, définie sur une extension finie $M$ de $\mathbb{Q}_p$. Le point $y$ correspond à un sous-groupe $L$ de $A[\pi_i^{2e_i}]$. De plus, quitte à élargir $M$, on peut supposer que $A$ s'étend en un schéma semi-abélien $A_0$ sur $O_M$, et $L$ en un sous-groupe quasi-fini et plat $L_0$ de $A_0 [\pi_i^{2e_i}]$. En gardant les mêmes notations que la démonstration $\ref{augmente}$, on note $\widetilde{G}$ un schéma semi-abélien de rang torique constant tel que $A_0$ soit obtenu comme un quotient de $\widetilde{G}$ par la construction de Mumford (voir l'annexe). Le même raisonnement que pour la proposition $\ref{augmente}$ montre que $L_1 := L_0 [\pi_i^{e_i}] \cap \widetilde{G} [p]$ est un groupe de Barsotti-Tate tronqué d'échelon $1$. On en déduit immédiatement que son degré est entier, et donc que $Deg_i (x) \in \frac{1}{2e_i} \mathbb{Z}$ avec notre définition de la fonction $Deg_i$. On veut montrer que $Deg_i (x) \in \frac{1}{e_i} \mathbb{Z} $ dans les cas $1$ et $2$.  On se ramène facilement au cas où $n=1$. Etudions alors les deux cas possibles. \\
Dans le premier cas, on peut décomposer $L_1$ en $L_1 = L_1^+ \oplus L_1^-$, où $L_1^+ = L_1 [(\pi_i^+)^{e_i}]$, et $L_1^-$ est l'orthogonal de $L_1^+$. Le fait que $L_1$ soit un groupe de Barsotti-Tate tronqué d'échelon $1$ montre alors que les degrés de $L_1^+$ et $L_1^-$ sont entiers, donc que $Deg_i(x)$ est un multiple de $1/e_i$. \\
Dans le cas $2$, soit $F_i^{nr}$ est l'extension maximale non ramifiée contenue dans $F_i$, et de même pour $F_{0,i}^{nr}$. Alors le sous-groupe $L_1$ est muni d'une action de $O_{F_i^{nr}}$, et on peut donc définir les degrés partiels de $L_1$ pour cette action. Si $S$ est l'ensemble des plongements de $F_i^{nr}$ dans $\overline{\mathbb{Q}}_p$, alors le degré de $L_1$ est la somme des degrés partiels deg$_s L_1$, pour tout $s \in S$. Comme $L_1$ est un groupe de Barsotti-Tate tronqué d'échelon $1$, les degrés partiels sont tous entiers. De plus, si $s_0$ est un plongement de $F_{0,i}^{nr}$, $s$ et $\overline{s}$ les deux éléments de $S$ au-dessus de $s_0$, alors en raisonnant comme dans la remarque $\ref{part}$ on a par dualité
$$\text{deg}_s L_1 = \text{deg}_{\overline{s}} L_1$$
Le degré de $L_1$ est donc pair. Avec notre définition de la fonction degré, cela prouve que $Deg_i(x)$ appartient à $\frac{1}{e_i} \mathbb{Z}$. 
\end{proof}

\begin{rema} \label{mauvais}
La situation est plus compliquée dans le cas $3$, ce qui explique la différence dans le résultat. Cela impliquera une borne moins forte dans le résultat de classicité. Néanmoins, une analyse plus détaillée de ce cas pourrait peut-être permettre d'obtenir un résultat équivalent aux cas $1$ et $2$.
\end{rema}

Comme dans le cas précédent, on en déduit la proposition suivante. On note $e_i' = e_i$ dans les cas $1$ et $2$, et $e_i' = 2 e_i$ dans le cas $3$.

\begin{prop} \label{dynaA}
Soit $1 \leq i \leq h$, $k$ un entier compris entre $0$ et $e_i' f_i a_i-1$ et $0 < \alpha < \beta <1$ deux rationnels. Alors il existe un entier $N$ tel que
$$U_{\pi_i}^N \left( Deg_i^{-1} ( [\frac{k+\alpha}{e_i'},f_i a_i] ) \right) \subset Deg_i^{-1} ( [\frac{k+\beta}{e_i'}, f_i a_i ] )$$
\end{prop}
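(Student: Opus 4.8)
The plan is to transcribe, almost verbatim, the argument proving Corollary~\ref{dyna} in the type~$(C)$ setting, replacing $e_i$ by $e_i'$ throughout; the only genuinely new ingredient is the three-case analysis, and that has already been absorbed into the preceding proposition. The backbone is a contradiction argument driven by a uniform strict-increase estimate, so I would first establish that estimate and then run the dynamical iteration.

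First I would produce the strict-increase statement: there exists $\varepsilon>0$ such that $Deg_i(y)\geq Deg_i(x)+\varepsilon$ for every $x\in Deg_i^{-1}([\frac{k+\alpha}{e_i'},\frac{k+\beta}{e_i'}])$ and every $y\in U_{\pi_i}^{2e_i}(x)$. The pointwise strict increase is immediate from the preceding proposition: if some $y\in U_{\pi_i}^{2e_i}(x)$ satisfied $Deg_i(y)=Deg_i(x)$, then $Deg_i(x)$ would lie in $\frac{1}{e_i'}\mathbb{Z}$ (this is where the normalization $e_i'=2e_i$ of Remark~\ref{mauvais} in the ramified case~$3$ is used, so that all three cases yield the single uniform conclusion $x_i\in\frac{1}{e_i'}\mathbb{Z}$); but $0<\alpha<\beta<1$ forces the closed band to sit strictly inside the open interval $(\frac{k}{e_i'},\frac{k+1}{e_i'})$, which misses $\frac{1}{e_i'}\mathbb{Z}$. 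Hence the increase is strict at every point of the band. To upgrade this to a \emph{uniform} $\varepsilon$ I would reuse the maximum-principle device of the type~$(C)$ proof: introduce the correspondence space parametrizing supplements $L\subset A[\pi_i^{2e_i}]$, pass to a compactification $\overline{C_i}^{an}$ compatible with $\overline{X}_{Iw}^{an}$, realize the degree difference $Deg_i(y)-Deg_i(x)$ as the valuation of the ratio of the two canonical sections $\delta_{H_{i,a_i}}$ and $\delta_{H_{i,a_i}'}$ (an honest analytic function), and apply the maximum principle on the quasi-compact locus $p_1^{-1}(Deg_i^{-1}([\frac{k+\alpha}{e_i'},\frac{k+\beta}{e_i'}]))$, whose quasi-compactness follows from the analogue of the quasi-compactness proposition for preimages of products of compact rational intervals.

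With $\varepsilon$ in hand I would argue by contradiction exactly as for Corollary~\ref{dyna}. If the claimed inclusion failed, then for every $n$ there would exist $x_n$ with $Deg_i(x_n)\in[\frac{k+\alpha}{e_i'},f_i a_i]$ and $y_n\in U_{\pi_i}^n(x_n)$ with $Deg_i(y_n)\leq\frac{k+\beta}{e_i'}$. Since $U_{\pi_i}$ never decreases $Deg_i$ (first part of the preceding proposition), every link of the Hecke chain from $x_n$ to $y_n$ has $Deg_i$ trapped in $[\frac{k+\alpha}{e_i'},\frac{k+\beta}{e_i'}]$; applying the strict-increase estimate along the steps that are multiples of $2e_i$ gives $Deg_i(y_{2e_i m})\geq Deg_i(x_{2e_i m})+m\varepsilon\geq\frac{k+\alpha}{e_i'}+m\varepsilon$, which for $m$ large exceeds $f_i a_i$ — impossible, since $Deg_i$ takes values in $[0,f_i a_i]$.

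The only genuinely new point, and the sole place where the three-case structure intervenes, is the strict-increase input of the second paragraph: the whole estimate hinges on the band avoiding $\frac{1}{e_i'}\mathbb{Z}$, and it is precisely to guarantee this in the ramified case~$3$ that $e_i'$ is set to $2e_i$ rather than $e_i$. Everything downstream — the maximum-principle uniformization and the telescoping contradiction — is a faithful copy of the type~$(C)$ arguments and should present no difficulty.
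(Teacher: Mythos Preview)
Your proposal is correct and follows exactly the approach the paper intends: the paper simply says ``Comme dans le cas pr\'ec\'edent, on en d\'eduit la proposition suivante'' and states Proposition~\ref{dynaA} without proof, relying on the reader to transport the strict-increase estimate and the contradiction argument of Corollary~\ref{dyna} from the type~$(C)$ setting, with $e_i$ replaced by $e_i'$. You have faithfully reconstructed both steps, and your observation that the three-case analysis is entirely absorbed into the preceding proposition (so that only the single conclusion $x_i\in\frac{1}{e_i'}\mathbb{Z}$ is needed downstream) is precisely the point.
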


Nous allons maintenant définir une norme sur l'espace des formes modulaires. Rappelons que nous avons noté $\mathcal{A}_{nd(a+b)}$ le schéma sur $\mathbb{Z}_p$ paramétrant les variétés abéliennes de dimension $nd(a+b)$ munies d'une polarisation de degré premier à $p$ avec une structure de niveau $N$, et $\overline{\mathcal{A}}_{nd(a+b)}$ une compactification toroïdale de ce schéma. On se donne une identification de $\Sigma$ avec $\{1, \dots, d\}$. En particulier, on a des couples $(a_1,b_1), \dots, (a_d,b_d)$.

\begin{defi}
Soit $\widetilde{\mathcal{A}}_{nd(a+b)}$ l'espace de modules sur $\mathbb{Z}_p$ dont les $S$-points sont :
\begin{itemize}
\item un point $x \in \overline{\mathcal{A}}_{nd(a+b)}(S)$.
\item une filtration $0 = \omega_{A,0} \subset \omega_{A,1} \subset \dots \subset \omega_{A,2nd} = \omega_A$ telle que pour tout $1 \leq i \leq nd$, $\omega_{A,2i-1}/\omega_{A,2i-2}$ et $\omega_{A,2i}/\omega_{A,2i-1}$ sont localement des $\mathcal{O}_S$-facteurs directs de $\omega_A$ de rangs respectifs $a_i$ et $b_i$ ($a_i$ est égal à $a_j$ est $j$ est l'unique entier compris entre $1$ et $d$ et congru à $i$ modulo $d$, et de même pour $b_i$).
\end{itemize}
\end{defi}

L'espace $\widetilde{\mathcal{A}}_{nd(a+b)}$ est un schéma propre sur $\mathbb{Z}_p$. Soit 
$$\mathcal{T}_i = \text{Isom}_{\mathcal{O}_{\widetilde{\mathcal{A}}_{nd(a+b)}}} (\omega_{A,2i-1} / \omega_{A,2i-2} , \mathcal{O}_{\widetilde{\mathcal{A}}_{nd(a+b)}}^{a_i}) \oplus \text{Isom}_{\mathcal{O}_{\widetilde{\mathcal{A}}_{nd(a+b)}}} (\omega_{A,2i} / \omega_{A,2i-1} , \mathcal{O}_{\widetilde{\mathcal{A}}_{nd(a+b)}}^{b_i})$$
pour $1 \leq i \leq d$ (avec par convention $\omega_{A,0}=0$). On note $\phi_i : \mathcal{T}_i \to \widetilde{\mathcal{A}}_{nd(a+b)}$ la projection. L'espace $\mathcal{T}_i$ est un torseur sur $\widetilde{\mathcal{A}}_{ndg}$ pour le groupe GL$_{a_i} \times $GL$_{b_i}$. Si $\kappa_i=(k_j,l_j)$ est un élément de $\mathbb{Z}^{a_i} \times \mathbb{Z}^{b_i}$, on note $\omega_i^{\kappa_i} = {\phi_i}_* \mathcal{O}_{\mathcal{T}_i} [- \kappa_i']$, où $\kappa_i'=(k_{a_i+1-j}, l_{b_i+1-j})$, et où ${\phi_i}_* \mathcal{O}_{\mathcal{T}_i} [- \kappa_i']$ est le sous-faisceau de ${\phi_i}_* \mathcal{O}_{\mathcal{T}_i}$ où le tore de GL$_{a_i} \times $GL$_{b_i}$ agit par $-\kappa_i'$, et où le radical unipotent agit trivialement. \\
Rappelons que nous avons défini le poids d'une forme modulaire comme des couples $(k_{i,\sigma})_{1\leq i \leq a_\sigma, \sigma \in \Sigma}$ et $(l_{i,\sigma})_{1\leq i \leq b_\sigma, \sigma \in \Sigma}$, où $\Sigma$ est l'ensemble des plongements de $F$ dans $\overline{\mathbb{Q}}$, vérifiant $k_{1,\sigma} \geq \dots \geq k_{a_\sigma,\sigma}$ et $l_{1,\sigma} \geq \dots \geq l_{b_\sigma,\sigma}$, pour tout $\sigma \in \Sigma$. On note $\kappa_\sigma = ((k_{i,\sigma}),(l_{i,\sigma}))$ ; avec la numérotation faite sur $\Sigma$, on a donc des éléments $\kappa_i$ de $\mathbb{Z}^{a_i} \times \mathbb{Z}^{b_i}$. On note alors $\omega_0^\kappa$ le faisceau défini sur $\widetilde{\mathcal{A}}_{ndg}$ par $\omega_0^\kappa := \otimes_{i=1}^d \omega_i^{\kappa_i}$.\\
Soit $\widetilde{\mathcal{A}}_{nd(a+b)}^{rig}$ l'espace rigide associé à $\widetilde{\mathcal{A}}_{nd(a+b)} \times_{\mathbb{Z}_p} O_K$. Comme le schéma $\widetilde{\mathcal{A}}_{nd(a+b)}$ est propre, cet espace rigide est égal à $(\widetilde{\mathcal{A}}_{nd(a+b)} \times_{\mathbb{Z}_p} K)^{an}$. \\
$ $\\
Nous allons maintenant définir un morphisme de $\overline{X}_{Iw}$ vers $\widetilde{\mathcal{A}}_{nd(a+b)} \times_{\mathbb{Z}_p} K$. Le faisceau $\omega_A$ universel sur $\overline{X}_{Iw}$ est muni d'une action de $O_B$, et se décompose en $\omega_A = \oplus_{i=1}^h \omega_{A,i}$, où $\omega_{A,i}$ est un $O_{B,i}$-module. Rappelons que $O_{B,i}$ est égal à M$_n(O_{F_{0,i}}) \oplus $M$_n(O_{F_{0,i}})$ dans le cas $1$ et à M$_n(O_{F_i})$ dans les cas $2$ et $3$. Par équivalence de Morita, le faisceau $\omega_{A,i}$ est la somme de $n$ copies de $\omega_{A,i,0}$, om $\omega_{A,i,0}$ est un faisceau localement libre de rang $e_i f_i (a+b)$ muni d'une action de $O_{F_{0,i}} \oplus O_{F_{0,i}}$ dans le cas $1$, et de $O_{F_i}$ dans les cas $2$ et $3$. De plus, on peut décomposer ce dernier faisceau suivant les éléments de $\Sigma_i$ : 
$$\omega_{A,i,0} = \oplus_{\sigma \in \Sigma_i} (\omega_{A,i,0,\sigma,1} \oplus \omega_{A,i,0,\sigma,2} )$$
où $\omega_{A,i,0,\sigma,1}$  et $\omega_{A,i,0,\sigma,2}$ sont des faisceaux localement libres de rang $a_\sigma$ et $b_\sigma$. On obtient de cette manière une filtration du faisceau $\omega_A$.

\begin{defi}
On définit un morphisme $\psi : \overline{X}_{Iw} \to \widetilde{\mathcal{A}}_{nd(a+b)} \times_{\mathbb{Z}_p} K$ par la formule $x \to (\mathcal{P}(x), (\omega_{A,\bullet}))$, où $\mathcal{P}$ est le morphisme d'oubli de l'action de $O_B$ et de la structure Iwahorique, et où la filtration $(\omega_{A,\bullet})$ de $\omega_A$ est déduite de ce qui précède.
\end{defi}

On en déduit un morphisme $\psi : \overline{X}_{Iw}^{an} \to \widetilde{\mathcal{A}}_{nd(a+b)}^{rig}$. Au poids $\kappa$ nous avons associé un faisceau $\omega_0^\kappa$ sur $\widetilde{\mathcal{A}}_{nd(a+b)}$, à l'aide d'un choix de numérotation des places que l'on supposera compatible avec la filtration de $\omega_A$ construite précédemment. On a alors $\omega^\kappa = \psi^* \omega_0^\kappa$ comme faisceaux sur $\overline{X}_{Iw}$ et $\overline{X}_{Iw}^{an}$. Cela permet de disposer d'une structure entière pour le faisceau $\omega^\kappa$ (sur $\overline{X}_{Iw}^{an}$), et d'une norme sur l'espace $H^0(\mathcal{U},\omega^\kappa)$ pour tout ouvert $\mathcal{U}$ de $\overline{X}_{Iw}^{an}$.

\subsection{Classicité}

Nous prouvons dans cette partie le théorème de classicité. Enonçons tout d'abord le théorème. Nous avons noté $\Sigma$ l'ensemble des plongements de $F_0$ dans $\overline{\mathbb{Q}}$, $\Sigma_i$ l'ensemble des plongements de $F_{0,i}$ dans $\overline{\mathbb{Q}}_p$, de telle sorte que $\Sigma$ soit l'union disjointe des $\Sigma_i$. Notons également $F_{0,i}^{nr}$ l'extension maximale non ramifiée contenue dans $F_{0,i}$, et $S_i$ l'ensemble des plongements de $F_{0,i}^{nr}$ dans $\overline{\mathbb{Q}}_p$. Pour tout $s \in S_i$, on note $\Sigma_s$ l'ensemble des éléments de $\Sigma_i$ égaux à $s$ en restriction à $F_{0,i}^{nr}$. Ainsi, $\Sigma_i$ est égal à l'union disjointe des $\Sigma_s$ pour $s \in S_i$.

\begin{theo} \label{theogen_A}
Soit $f$ une forme surconvergente de poids $\kappa = (k_\sigma,l_\sigma)_{\sigma \in \Sigma}$, $k_\sigma = (k_{1,\sigma} \geq \dots k_{a_\sigma,\sigma})$ et $l_\sigma = (l_{1,\sigma} \geq \dots l_{b_\sigma,\sigma})$. On suppose que $f$ est propre pour les opérateurs $U_{\pi_i}$ de valeur propre $\alpha_i$ pour tout $i$ entre $1$ et $h$. Supposons que
$$d_i a_i b_i + e_i v(\alpha_i) < \inf_{s \in S_i} (\inf_{\sigma \in \Sigma_s} k_{a_\sigma,\sigma} + \inf_{\sigma \in \Sigma_s} l_{b_\sigma,\sigma})$$
dans les cas $1$ et $2$, et que 
$$d_i a_i b_i + e_i v(\alpha_i) < \inf_{\sigma \in \Sigma_i} (k_{a_\sigma,\sigma},l_{b_\sigma,\sigma})$$
dans le cas $3$. Alors $f$ est classique.
\end{theo}

Dans le cas $2$, la condition se réécrit donc
$$d_i \frac{(a+b)^2}{4} + e_i v(\alpha_i) < \inf_{s \in S_i} (\inf_{\sigma \in \Sigma_s} k_{a_\sigma,\sigma} + \inf_{\sigma \in \Sigma_s} l_{b_\sigma,\sigma})$$
puisque $a_i = b_i = (a+b)/2$. Remarquons également que la condition du théorème dans les cas $1$ et $2$ est impliquée par la condition suivante, plus forte :
$$d_i a_i b_i + e_i v(\alpha_i) < \inf_{\sigma \in \Sigma_i} k_{a_\sigma,\sigma} + \inf_{\sigma \in \Sigma_i} l_{b_\sigma,\sigma}$$
$ $\\
Passons maintenant à la démonstration du théorème. Soit $f$ une forme surconvergente vérifiant les hypothèses du théorème. Par définition, $f$ est une section de $\omega^\kappa$ sur $\overline{X}_{Iw,J}$ avec $J=~\prod_{i=1}^h [f_i a_i -~ \varepsilon, f_i a_i]$, pour un certain $\varepsilon >0$. Nous allons prolonger $f$ à $X_{Iw}^{an}$. la méthode est analogue à celle du cas des variétés de type $C$ : on va prolonger $f$ dans chaque direction, successivement. \\
Plus précisément, nous allons prolonger $f$ à Deg$^{-1}([0,f_1 a_1] \times \dots \times [f_h a_h-\varepsilon,f_h a_h]) \cap X_{Iw}^{an}$ en utilisant le fait que $f$ est propre pour $U_{\pi_1}$ et la relation vérifiée par la valeur propre $\alpha_{1}$. En répétant ce processus, on prolongera donc $f$ à tout $X_{Iw}^{an}$. \\
Soit donc $\mathcal{U}_I := $Deg$^{-1} ( I \times [f_2 a_2-\varepsilon,f_2 a_2] \times \dots \times [f_h a_h-\varepsilon, f_h a_h]) \cap X_{Iw}^{an}$, pour tout intervalle $I$ de $[0,f_1 a_1]$. La forme $f$ est définie sur $\mathcal{U}_{[f_1 a_1-\varepsilon, f_1 a_1]}$. En utilisant la relation $f = \alpha_{1}^{-m} U_{\pi_1}^m f$ pour tout $m \geq 1$ et la proposition \ref{dynaA}, on peut donc prolonger $f$ à $\mathcal{U}_{]f_1 a_1-1/e_1',f_1 a_1]}$. \\
Soit $\mathcal{U}:=\mathcal{U}_{[0,f_1 a_1-1/e_1'+\beta]}$, avec $\beta$ un rationnel strictement positif, que l'on prendra arbitrairement petit. On peut décomposer les opérateurs de Hecke sur cet espace, et obtenir des opérateurs $U_{\pi_1,j}^{good}$ et $U_{\pi_1,j}^{bad}$. On peut alors former les séries de Kassaei attachées à cette décomposition. Le fait que ces séries se recolleront découle alors de la proposition suivante.

\begin{prop} \label{lemnorm_A}
Soit $T$ un opérateur égal à un certain $U_{\pi_1,j}^{bad}$. On suppose que l'image de cet opérateur est incluse dans $\mathcal{U}_{[0,f_1 a_1 - c]}$ pour un certain $c \geq 0$. Alors
$$ \Vert T \Vert_\mathcal{U} \leq p^{N_i-c M}$$
avec $M=\inf_{s \in S_1} (\inf_{\sigma \in \Sigma_s} k_{a_\sigma,\sigma} + \inf_{\sigma \in \Sigma_s} l_{b_\sigma,\sigma})$ dans les cas $1$ et $2$, et $M=2 \inf_{\sigma \in \Sigma_1} (k_{a_\sigma,\sigma},l_{b_\sigma,\sigma})$ dans le cas $3$.
\end{prop}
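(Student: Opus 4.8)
The plan is to follow verbatim the structure of the proof of Proposition~\ref{lemnorm}: since the trace maps, the restrictions and the pullbacks are all of norm $\leq 1$ for the chosen integral structures, and since dividing by $p^{N_i}$ multiplies norms by $p^{N_i}$, it suffices to bound pointwise the norm of the morphism $p^*(\kappa) : p_2^*\omega^\kappa \to p_1^*\omega^\kappa$ by $p^{-cM}$. First I would fix $x=(A,\iota,\eta,H_{\bullet},\omega_{A,\bullet})\in X_{Iw}^{an}(\overline{\mathbb{Q}}_p)$ together with the subgroup $L\subset A[\pi_i]$ attached to the correspondence, and spread everything out over $\overline{\mathbb{Z}_p}$: a semi-abelian $A_0$ extends $A$ with its $O_B$-action and with an extension $L_0$ of $L$, and a semi-abelian $\widetilde{G}$ (global extension of a torus by an abelian scheme) realizes $A_0$ as a Mumford quotient. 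Setting $\widetilde{L}=L_0\cap\widetilde{G}[p]$ and using $\omega_{A_0}\simeq\omega_{\widetilde{G}}$, $\omega_{A_0/L_0}\simeq\omega_{\widetilde{G}/\widetilde{L}}$, the conormal sequence of $A_0\to A_0/L_0$ becomes a strict exact sequence $0\to\omega_{\widetilde{G}/\widetilde{L}}\to\omega_{\widetilde{G}}\to\omega_{\widetilde{L}}\to 0$ of $\overline{\mathbb{Z}_p}\otimes O_B$-modules.

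Next I would apply Morita equivalence and split these modules along $\Sigma$ and, for each $\sigma\in\Sigma_i$, along the two canonical summands $\omega_{A,i,0,\sigma,1}$ (rank $a_\sigma$) and $\omega_{A,i,0,\sigma,2}$ (rank $b_\sigma$) dictated by the determinant condition; the pieces indexed by $\sigma\notin\Sigma_i$ are isomorphisms because $L\subset A[\pi_i]$. On the two graded pieces attached to $\sigma\in\Sigma_i$ the sequence induces injections $f_{\sigma,1},f_{\sigma,2}$, and writing $\lambda_{\sigma,1}=v(\det f_{\sigma,1})$, $\lambda_{\sigma,2}=v(\det f_{\sigma,2})$, and using that the lowest weights of $\kappa$ on the two blocks at $\sigma$ are $k_{a_\sigma,\sigma}$ and $l_{b_\sigma,\sigma}$, dominance of $\kappa$ yields
$$\Vert p^*(\kappa)\Vert_x \leq \prod_{\sigma\in\Sigma_i} p^{-\lambda_{\sigma,1}k_{a_\sigma,\sigma}-\lambda_{\sigma,2}l_{b_\sigma,\sigma}}.$$

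It then remains to bound $\sum_{\sigma\in\Sigma_i}(\lambda_{\sigma,1}k_{a_\sigma,\sigma}+\lambda_{\sigma,2}l_{b_\sigma,\sigma})$ from below by $cM$, and this is where the three cases separate. As in Proposition~\ref{augmente}, the assumption that the image lies in $\mathcal{U}_{[0,f_ia_i-c]}$, together with the normalization of $Deg_i$, forces $\tfrac12\deg\widetilde{L}\geq c$. In cases $1$ and $2$ the subgroup $L$ is its own orthogonal, so Proposition~\ref{dual} (as used in Remark~\ref{part}) gives the equality of partial degrees over the two embeddings of $F_i^{nr}$ above each $s\in S_i$; regrouping the sum according to $s\in S_i$, this equality means $\sum_{\sigma\in\Sigma_s}\lambda_{\sigma,1}=\sum_{\sigma\in\Sigma_s}\lambda_{\sigma,2}=:\mu_s$, whence
$$\sum_{\sigma\in\Sigma_i}(\lambda_{\sigma,1}k_{a_\sigma,\sigma}+\lambda_{\sigma,2}l_{b_\sigma,\sigma})\geq\sum_{s\in S_i}\mu_s\Big(\inf_{\sigma\in\Sigma_s}k_{a_\sigma,\sigma}+\inf_{\sigma\in\Sigma_s}l_{b_\sigma,\sigma}\Big)\geq M\sum_{s\in S_i}\mu_s=M\cdot\tfrac12\deg\widetilde{L}\geq cM.$$
In case $3$ the extension $F_i/F_{0,i}$ is totally ramified, so the two blocks can no longer be matched embedding by embedding; I would instead bound every weight crudely by $\inf_{\sigma\in\Sigma_i}(k_{a_\sigma,\sigma},l_{b_\sigma,\sigma})$, giving $\sum_\sigma(\lambda_{\sigma,1}k_{a_\sigma,\sigma}+\lambda_{\sigma,2}l_{b_\sigma,\sigma})\geq\inf_{\sigma\in\Sigma_i}(k_{a_\sigma,\sigma},l_{b_\sigma,\sigma})\deg\widetilde{L}\geq cM$ with $M=2\inf_{\sigma\in\Sigma_i}(k_{a_\sigma,\sigma},l_{b_\sigma,\sigma})$, which is exactly why the bound degrades in this case.

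The main obstacle is the bookkeeping of the last paragraph: identifying $\sum_{\sigma\in\Sigma_s}\lambda_{\sigma,1}$ and $\sum_{\sigma\in\Sigma_s}\lambda_{\sigma,2}$ with genuine partial degrees of $\widetilde{L}$ and transporting the Cartier/Rosati self-duality of $L$ into the equality $\deg_s=\deg_{\bar s}$ in cases $1$ and $2$, as well as pinning down the constant $\tfrac12$ relating $\deg\widetilde{L}$ to $c$. Everything else—the reduction to $n=1$ by Morita, the strictness of the filtration over $\overline{\mathbb{Z}_p}$, and the emergence of the factor $p^{N_i}$ from the normalization—runs exactly as in the type $(C)$ argument of Proposition~\ref{lemnorm}.
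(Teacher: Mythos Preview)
Your proposal is correct and follows essentially the same route as the paper: reduce to bounding $\Vert p^*(\kappa)\Vert_x$ pointwise via the conormal exact sequence, pass to $\widetilde{G}$ and $\widetilde{L}$ through the Mumford construction, decompose along the embeddings, and invoke self-duality of $L$ together with Proposition~\ref{dual} to match the two blocks in cases $1$ and $2$, while using only the crude determinant bound in case $3$. The one organizational difference is that the paper separates cases $1$ and $2$ explicitly --- in case~$1$ it writes $L=L^+\oplus L^-$ and uses that the partial degrees of $L^+$ and $L^-$ (as $O_{F_{0,i}}$-groups) coincide, while in case~$2$ it decomposes along the embeddings of $F_i^{nr}$ and uses $\deg_s=\deg_{\bar s}$ --- whereas you attempt to treat both at once via the two-block-per-$\sigma$ decomposition; your phrase ``two embeddings of $F_i^{nr}$ above each $s\in S_i$'' is literally only correct in case~$2$ (in case~$1$ there is no single field $F_i$), but the underlying identification of $\sum_{\sigma\in\Sigma_s}\lambda_{\sigma,1}$ and $\sum_{\sigma\in\Sigma_s}\lambda_{\sigma,2}$ with partial degrees of the two dual halves of $\widetilde{L}$ is the same content either way.
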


\begin{proof}
En raisonnant comme dans la proposition \ref{lemnorm}, il suffit de majorer la norme du morphisme $\omega_{A/L}^\kappa \to \omega_{A}^\kappa$ pour un schéma semi-abélien $A$ définie sur $O_M$ ($M$ est une extension finie de $\mathbb{Q}_p$), et $L$ un sous-groupe de $A[\pi_1]$ totalement isotrope maximal stable par $O_B$ pouvant intervenir dans la définition de l'opérateur de Hecke. On se ramène au cas où $n=1$, $A$ est un schéma abélien, et $\pi_1$ est la seule place au-dessus de $p$. On a un morphisme naturel $q : \omega_{A/L} \to \omega_A$. Distinguons maintenant les différents cas. \\
Dans le cas $1$, la place $\pi_1$ est décomposée dans $F$. Le $O_M \otimes_{\mathbb{Z}} O_F$-module $\omega$ se décompose donc en $\omega_A = \omega_A^+ \oplus \omega_A^-$, où $\omega_A^+$ et $\omega_A^-$ sont des $O_M \otimes_{\mathbb{Z}_p} O_{F_{0,1}}$-modules. De même pour $\omega_{A/L}$. \\
De plus, le module $\omega_A^+$ se décompose en somme directe par les éléments de $S_1$ : $\omega_A^+ = \oplus_{s \in S_1} \omega_{A,s}^+$. On obtient décomposition analogue pour $\omega_{A/L}^+$. Le morphisme $q$ respecte cette filtration ; soit $\lambda_s$ le déterminant du morphisme $\omega_{A/L,s}^{+} \to \omega_{A,s}^{+}$. La valuation de $\lambda_s$ est égale au degré partiel deg$_s L^+$, où $L^+ = L[\pi_1^+]$. Si $\omega_A^{\kappa,+}$ désigne le module associé au poids $((k_{ij}),(0))$, et $q^{\kappa,+} : \omega_{A/L}^{\kappa,+} \to \omega_A^{\kappa,+}$ le morphisme induit  alors on a
$$ || q^{\kappa,+}|| \leq p^{- \sum_{s \in S_1} (\text{deg}_s L^+ \inf_{\sigma \in \Sigma_s} k_{a_\sigma,\sigma} )     }    $$
De même, par dualité, si on note $\omega_A^{\kappa,-}$ désigne le module associé au poids $((0),(l_{ij}))$, et $q^{\kappa,-} : \omega_{A/L}^{\kappa,-} \to \omega_A^{\kappa,-}$ le morphisme induit, on a
$$ || q^{\kappa,-}|| \leq p^{- \sum_{s \in S_1} (\text{deg}_s L^- \inf_{\sigma \in \Sigma_s} l_{b_\sigma,\sigma} )     }    $$
où $L^- = L[\pi_1^-]$. Or par dualité, les degrés partiels de $L^+$ et $L^-$ son égaux. De plus, comme par hypothèse le degré de $L$ est supérieur à $2c$, on a $\sum_{s \in S_1}$ deg$_s L^+  \geq c$. On obtient que la norme du morphisme $q^\kappa : \omega_{A/L}^\kappa \to \omega_{A}^\kappa$ est majorée par
$$ || q^{\kappa}|| \leq p^{- \sum_{s \in S_1} \text{deg}_s L^+ (\inf_{\sigma \in \Sigma_s} k_{a_\sigma,\sigma} +  \inf_{\sigma \in \Sigma_s} l_{b_\sigma,\sigma} )} \leq p^{ -c M   }    $$
Dans le cas $2$, soit $F_{0,1}^{nr}$ l'extension maximale non ramifiée contenue dans $F_{0,1}$, et de même pour $F_1^{nr}$. Soit $S_1'$ l'ensemble des plongements de $F_1^{nr}$ dans $\overline{\mathbb{Q}}_p$. Le $O_M \otimes_{\mathbb{Z}_p} O_{F_1}$-module $\omega_A$ se décompose donc en somme directe suivant les éléments de $S_1$. On se ramène au cas où $S_{0,1}$ n'a qu'un seul élément, i.e. $p$ est totalement ramifié dans $F_{0,1}$. On peut alors décomposer le module $\omega_A$ en $\omega_A^+ \oplus \omega_A^-$. La majoration est alors identique au calcul précédent. \\
Dans le cas $3$, puisque le déterminant de $q : \omega_{A/L} \to \omega_A$ est de valuation égale au degré de $L$, qui est supérieur à $2c$, on obtient directement
$$ || q^{\kappa}|| \leq   p^{ -2c \inf_{\sigma \in \Sigma_1} (k_{a_\sigma,\sigma},l_{b_\sigma,\sigma}) }    $$
\end{proof}

\begin{rema}
Les majorations de norme dans le cas $3$ sont plus difficiles, mais nous pensons qu'il devrait être possible d'améliorer la borne obtenue dans ce cas $3$ par celle (meilleure) obtenue dans les autres cas. Avec la remarque $\ref{mauvais}$, cela permettrait d'améliorer la borne dans le résultat de classicité, et au final avoir un critère uniforme dans chacun des cas $1$, $2$ et $3$.
\end{rema}

Comme les opérateurs $U_{\pi_1,j}^{bad}$ sont à valeurs dans $\mathcal{U}_{[f_1 a_1 - (1/e_1 - \beta ),f_1 a_1]}$, cela montre que si $\beta$ est choisi suffisamment petit, les opérateurs $\alpha_{1}^{-1} U_{\pi_1,j}^{bad}$ sont tous de norme strictement inférieure à $1$. Les séries définies se recolleront donc, et permettent d'étendre $f$ à $\mathcal{U}_{[0,f_1 a_1]}$. \\
En itérant ce raisonnement, on prolonge $f$ à $X_{Iw}^{an}$, c'est-à-dire un élément de $H^0(X_{Iw}^{an},\omega^\kappa)$. De plus, on démontre que cette fonction est bornée sur $X_{Iw}^{an}$. Le théorème d'extension $\ref{extension}$ montre que $f$ s'étend à la compactification, soit $f \in H^0(\overline{X}_{Iw}^{an},\omega^\kappa)$. Un principe GAGA (voir \cite{EGA2} partie $5.1$) montre alors que $f$ est une forme modulaire algébrique, c'est-à-dire provient d'un élément de l'espace $H^0(\overline{X}_{Iw},\omega^\kappa)$.

\section{Cas d'un niveau arbitraire en $p$} \label{partie_gen}

Nous montrons dans cette section que les résultats obtenus se généralisent à des variétés de Shimura avec des structures de niveau en $p$ plus générales. Remarquons que, même dans le cas où $p$ est non ramifié dans le corps $F$, on ne sait pas construire de bons modèles entiers pour les variétés. La situation est donc plus compliquée que le cas précédent, où le problème venait simplement de l'abscence de modèle entier pour les compactifications.

\subsection{Définitions}

Soit $(F,F_0,B,\star)$ une donnée de Shimura de type (A) ou (C) comme définie précédemment, et $(U_{\mathbb{Q}},\langle,\rangle)$ un $B$-module hermitien non dégénéré. Soient également un ordre $O_B$ de $B$ stable par $\star$, et un réseau $U$ de $U_\mathbb{Q}$ tel que l'accouplement $\langle,\rangle$ restreint à $U\times U$ soit à valeurs dans $\mathbb{Z}$. On supposera que les hypothèses faites dans le cas (A) ou (C) sur $O_B$ et $U$ sont vérifiées. Introduisons maintenant la variété de niveau plus général en $p$. On rappelle que $X$ désigne la variété de Shimura sans niveau en $p$. Nous avons introduit un corps $K$ suffisamment grand dans les cas $(A)$ et $(C)$, qui est une extension finie de $\mathbb{Q}_p$. Soit $m \geq 1$ un entier. On suppose que $K$ contient suffisamment de racines $p^r$-ièmes de l'unité, de telle sorte que les caractères de $(O_F / \pi_i^m)$ dans $\overline{\mathbb{Q}}_p^\times$ sont à valeurs dans $K^\times$.

\begin{defi}
Soit $X_{0,m}$ l'espace de modules sur Spec$(K)$ dont les $S$-points sont les classes d'isomorphismes des $(A,\lambda,\iota,\eta,H_{\bullet})$ où
\begin{itemize}
\item $(A,\lambda,\iota,\eta) \in X(S)$
\item Dans le cas (C), pour tout $1 \leq i \leq h$, $0 \subset H_{i,1} \subset \dots \subset H_{i,g}$ est un drapeau de $A[\pi_i^m]$, chaque $H_{i,j}$ étant totalement isotrope, stable par $O_B$ et isomorphe localement pour la topologie étale à $(O_F / \pi_i^m O_F)^{nj}$.
\item Dans le cas (A) et si $\pi_i$ est dans le cas $1$, $0 \subset H_{i,1} \subset \dots \subset H_{i,a+b}$ est un drapeau de $A[(\pi_i^+)^m]$, chaque $H_{i,j}$ étant stable par $O_B$ et isomorphe localement pour la topologie étale à $(O_F / (\pi_i^+)^m O_F)^{nj}$.
\item Dans le cas (A) et si $\pi_i$ est dans le cas $2$ ou $3$, $0 \subset H_{i,1} \subset \dots \subset H_{i,(a+b)/2}$ est un drapeau de $A[\pi_i^m]$, chaque $H_{i,j}$ étant totalement isotrope, stable par $O_B$ et isomorphe localement pour la topologie étale à $(O_F / (\pi_i)^m O_F)^{nj}$.
\end{itemize}
\end{defi}

Si $A$ est un schéma abélien avec action de $O_B$, et si $\mathfrak{m}$ est un idéal de $O_F$, on dit qu'un point $P$ est d'ordre exactement $\mathfrak{m}^N$ si $\mathfrak{m}^N \cdot P=0$ et $\mathfrak{m}^{N-1} \cdot P \neq 0$.

\begin{defi} \label{defpoints}
Soit $X_{1,m}$ l'espace de modules sur Spec$(K)$ dont les $S$-points sont les classes d'isomorphismes des $(A,\lambda,\iota,\eta,P_{\bullet})$ où
\begin{itemize}
\item $(A,\lambda,\iota,\eta) \in X(S)$
\item Dans le cas (C), pour tout $1 \leq i \leq h$, $P_{i,1}, \dots, P_{i,g}$ sont des points de $A[\pi_i^m]$ d'ordre exactement $\pi_i^m$, orthogonaux entre eux, tels que le sous-groupe engendré par $P_{i,j}$ soit isomorphe localement pour la topologie étale à $(O_F / \pi_i^m O_F)^n$.
\item Dans le cas (A) et si $\pi_i$ est dans le cas $1$, $P_{i,1}, \dots, P_{i,a+b}$ sont des points de $A[(\pi_i^+)^m]$ d'ordre exactement $(\pi_i^+)^m$, tels que le sous-groupe engendré par $P_{i,j}$ est isomorphe localement pour la topologie étale à $(O_F / (\pi_i^+)^m O_F)^n$.
\item Dans le cas (A) et si $\pi_i$ est dans le cas $2$ ou $3$, $P_{i,1}, \dots, P_{i,(a+b)/2}$ sont des points de $A[\pi_i^m]$ d'ordre exactement $\pi_i^m$, orthogonaux entre eux, tels que le sous-groupe engendré par $P_{i,j}$ soit isomorphe localement pour la topologie étale à $(O_F / \pi_i^m O_F)^n$. 
\end{itemize}
\end{defi}

\begin{rema}
Plaçons-nous dans le cas (C), et soit $P$ un point de $A[\pi_i^m]$ d'ordre exactement $\pi_i^m$. La condition que le sous-groupe engendré par $P$ est isomorphe localement pour la topologie étale à $(O_F / \pi_i^m O_F)^n$ se reformule de la manière suivante. Le groupe $A[\pi_i^m]$ est muni d'une action de $M_n(O_F / \pi_i^m)$ ; soit $E_{j,k}$ la base traditionnelle de cet anneau comme $(O_F / \pi_i^m)$-module. La condition précédente est alors équivalente aux relations $E_{j,k} \cdot P = E_{j,l} \cdot P$ pour tout $j,k,l$. De même dans le cas (A).
\end{rema}

On dispose d'une application naturelle $F : X_{1,m} \to X_{0,m}$. Dans le cas (C), $F$ envoie $(A,\lambda,\iota,\eta,P_{\bullet})$ sur $(A,\lambda,\iota,\eta,H_{\bullet})$, où $H_{i,j}$ est le sous-groupe de $A[\pi_i^m]$ engendré par $P_{i,1}, \dots, P_{i,j}$, pour tout $1 \leq i \leq h$ et $1 \leq j \leq g$. L'application $F$ est un revêtement étale de groupe $\mathcal{G}=\prod_{i=1}^h \mathcal{G}_i$, avec $\mathcal{G}_i = B_g(O_F /\pi_i^m)$, où $B_g$ désigne le Borel supérieur de $GL_g$. De même dans le cas (A) : $F$ est alors un revêtement étale de groupe $\mathcal{G}=\prod_{i=1}^h G_i$, où $\mathcal{G}_i = B_{a+b}(O_F / (\pi_i^+)^m)$ dans le cas $1$, $\mathcal{G}_i = B_{(a+b)/2} ( O_F / \pi_i^m)$ dans le cas $2$ ou $3$. \\
Soit $\chi$ un caractère du tore de $\mathcal{G}$ que l'on voit comme un caractère de $\mathcal{G}$ ; le faisceau $F_* \mathcal{O}_{X_{1,m}}$ est muni d'une action de $\mathcal{G}$, et on note $\mathcal{O}_{X_{1,m}} (\chi) = F_* \mathcal{O}_{X_{1,m}} [\chi]$ le sous-faisceau où $\mathcal{G}$ agit par $\chi$. C'est un faisceau inversible sur $X_{0,m}$. \\
Le faisceau des formes modulaires de poids $\kappa$ et de nebentypus $\chi$ est le faisceau 
$$\omega^\kappa (\chi) := \omega^\kappa \otimes_{\mathcal{O}_{X_{0,m}}} \mathcal{O}_{X_{1,m}} (\chi)$$

Soient $\overline{X}_{1,m}$ et $\overline{X}_{0,m}$ des compactifications toroïdales respectivement de $X_{1,m}$ et $X_{0,m}$. On suppose que les compactifications sont construites de telle sorte que le morphisme $F$ s'étende en $F : \overline{X}_{1,m} \to \overline{X}_{0,m}$ (ce qui est possible d'après le théorème $\ref{morphcompact}$) ; le faisceau $\omega^\kappa (\chi)$ s'étend donc sur $\overline{X}_{0,m}$. L'espace des formes modulaires de poids $\kappa$ et de nebentypus $\chi$ est donc l'espace H$^0(\overline{X}_{0,m},\omega^\kappa (\chi))$. On notera $\overline{X}_{1,m}^{an}$, $\overline{X}_{0,m}^{an}$, $X_{1,m}^{an}$ et $X_{0,m}^{an}$ les espaces analytiques associés respectivement à $\overline{X}_{1,m}$, $\overline{X}_{0,m}$, $X_{1,m}$ et $X_{0,m}$. \\

Définissons maintenant les opérateurs de Hecke $U_{\pi_i}$, pour $1 \leq i \leq h$. Soit $C_i$ l'espace de modules sur $K$ paramétrant un point $x=(A,\lambda,\iota,\eta,H_{\bullet})$ de $X_{0,m}$, et un sous-groupe $L$, supplémentaire générique de $H_{i,D}[p]$ dans $A[p]$ avec $D$ qui vaut $g$,$a_i$, ou $(a+b)/2$ suivant les cas (dans le cas (A) et $\pi_i$ dans le cas $1$, $L= L_0 \oplus L_0^\bot$, avec $L_0$ un supplémentaire de $H_{i,a_i}$ dans $A[\pi_i^+]$). On dispose de deux applications $p_1,p_2 : C_i \to X_{0,m}$, où $p_1$ est l'oubli de $L$, et $p_2$ est le quotient par $L$. Soit $\overline{C}_i$ une compactification toroïdale de $C_i$ telle que les morphismes $p_1$ et $p_2$ s'étendent en des morphismes $\overline{C}_i \to \overline{X}_{0,m}$, et $\overline{C}_i^{an}$ l'espace analytique associé.

\begin{defi}
L'opérateur géométrique agissant sur les parties de $\overline{X}_{0,m}^{an}$ est défini par $U_{\pi_i} (S) = p_2(p_1^{-1} (S))$, pour toute partie $S$ de $\overline{X}_{0,m}^{an}$.
\end{defi}

Cet opérateur respecte les ouverts de $X_{0,m}^{an}$ (puisque les morphismes $p_1$ et $p_2$ sont finis étales sur $X_{0,m}^{an}$), mais pas ceux de $\overline{X}_{0,m}^{an}$ en général. Définissons maintenant l'opérateur de Hecke agissant sur les formes modulaires avec nebentypus. Nous devons pour cela définir un morphisme $p_2^* \omega^\kappa (\chi) \to p_1^* \omega^\kappa (\chi)$. Nous avons déjà défini un morphisme $p_2^* \omega^\kappa \to p_1^* \omega^\kappa$ à l'aide de l'isogénie universelle $A \to A/L$ sur $C_i$. Nous allons donc définir un morphisme $p_2^* \mathcal{O}_{X_{1,m}} (\chi) \to p_1^* \mathcal{O}_{X_{1,m}} (\chi)$. \\
Soit $C_{i,1}$ l'espace de modules paramétrant un point $(A,\lambda,\iota,\eta,H_{\bullet},L)$ de $C_i$ et des points $P_\bullet$ de $A[p^\infty]$ comme dans la définition $\ref{defpoints}$ tel que $F(A,\lambda,\iota,\eta,P_{\bullet}) = (A,\lambda,\iota,\eta,H_{\bullet})$. Soit $C_{i,1}'$ l'espace de modules paramétrant un point $(A,\lambda,\iota,\eta,H_{\bullet},L)$ de $C_i$ et des points $P_\bullet'$ de $(A/L)[p^\infty]$ comme dans la définition $\ref{defpoints}$ tel que $F(A/L,\lambda',\iota',\eta',P_{\bullet}') = (A/L,\lambda',\iota',\eta',H_{\bullet}')$. On dispose des morphismes de projection $q : C_{i,1} \to C_i$ et $q' : C_{i,1}' \to C_i$, qui consistent à oublier les points $P_\bullet$. Les morphismes $q$ et $q'$ sont des revêtements étales de groupe $\mathcal{G}_i$, et on a $p_1^* \mathcal{O}_{X_{1,m}} (\chi) = q_* \mathcal{O}_{C_{i,1}} (\chi)$ et $p_2^* \mathcal{O}_{X_{1,m}} (\chi) = q'_* \mathcal{O}_{C_{i,1}'} (\chi)$. \\
De plus, on a un isomorphisme naturel $C_{i,1} \simeq C_{i,1}'$, défini par $(A,\lambda,\iota,\eta,H_{\bullet},L,P_\bullet) \to (A,\lambda,\iota,\eta,H_{\bullet},L,P_\bullet')$, où les points $P_\bullet'$ sont les images des points $P_\bullet$ dans $A/L$. On en déduit donc un isomorphisme naturel entre $q_* \mathcal{O}_{C_{i,1}} (\chi)$ et $q'_* \mathcal{O}_{C_{i,1}'} (\chi)$, donc entre $p_1^* \mathcal{O}_{X_{1,m}} (\chi)$ et $p_2^* \mathcal{O}_{X_{1,m}} (\chi)$. \\
On a donc un morphisme $q^*(\kappa) (\chi) : p_2^* \omega^\kappa (\chi) \to p_1^* \omega^\kappa (\chi)$. Pour tout ouvert $\mathcal{U}$ de $X_{0,m}^{an}$, nous pouvons donc former le morphisme composé
\begin{displaymath}
\widetilde{U}_{\pi_i} :   H^0(U_{\pi_i}(\mathcal{U}),\omega^\kappa (\chi)) \to H^0 ( p_1^{-1} (\mathcal{U}), p_2^* \omega^\kappa (\chi)) \overset{q^*(\kappa) (\chi)}{\to} H^0(p_1^{-1}(\mathcal{U}) , p_1^* \omega^\kappa (\chi)) \overset{Tr_{p_1}}{\to}  H^0(\mathcal{U},\omega^\kappa (\chi))
\end{displaymath}

\begin{defi}
L'opérateur de Hecke agissant sur les formes modulaires est alors défini par $U_{\pi_i} = \frac{1}{p^{N_i}} \widetilde{U}_{\pi_i}$ avec $N_i$ le facteur de normalisation défini dans les parties précédentes. 
\end{defi}

\subsection{Degré et normes}

Nous allons maintenant définir la fonction degré sur $X_{0,m}^{an}$. Heureusement, nous allons utiliser la fonction degré que l'on a définie précédemment sur l'espace de niveau Iwahorique. Si $x=(A,\lambda,\iota,\eta,H_{\bullet})$ est un point de $X_{0,m}$, on rappelle que $H_{i,j}$ est un sous-groupe de $A[\pi_i^m]$ dans le cas (C), et de $A[(\pi_i^+)^m]$ ou $A[\pi_i^m]$ dans le cas (A), suivant les cas. On note $H_{i}^{(m-1)}$ le sous-groupe de $A[p^\infty]$ égal à $H_{i,g}[\pi_i^{m-1}]$ dans le cas (C), à $H_{i,a_i}[(\pi_i^+)^{m-1}] \oplus H_{i,a_i}^\bot[(\pi_i^-)^{m-1}]$ dans le cas (A)-$1$, et à $H_{i,(a+b)/2}[\pi_i^{m-1}]$ dans les cas (A)-$2$ et (A)-$3$. On note alors $H^{(m-1)}$ le sous-groupe de $A[p^\infty]$ engendré par les $H_{i}^{(m-1)}$, pour tout $i$. \\
On définit alors un morphisme $G : X_{0,m} \to X_{Iw}$ par $(A,\lambda,\iota,\eta,H_{\bullet}) \to (A / H^{(m-1)},\lambda',\iota',\eta',H_{\bullet}')$, où $H_{i,j}'$ est l'image de $H_{i,j}$ dans $A / H^{(m-1)}$, sauf éventuellement dans le cas (A)-$1$, où $H_{i,j}'$ est égal à l'image de $H_{i,j} \cap (\pi_i^+)^{-1} H_i^{(m-1)}$ dans $A/ H^{(m-1)}$ pour $j > a_i$. On vérifie alors que $H_{i,j}'$ est un sous-groupe de $A[\pi_i]$ ou $A[\pi_i^+]$ suivant les cas, et que $(A / H^{(m-1)},\lambda,\iota,\eta,H_{\bullet}')$ définit bien un point de $X_{Iw}$. De plus, on peut supposer que les compactifications toroïdales sont construites de telle sorte que le morphisme $G$ s'étende en $G : \overline{X}_{0,m} \to \overline{X}_{Iw}$. On notera encore $G$ le morphisme analytifié $\overline{X}_{0,m}^{an} \to \overline{X}_{Iw}^{an}$.

\begin{defi}
On définit la fonction degré sur $\overline{X}_{0,m}^{an}$ par Deg$ : \overline{X}_{0,m}^{an} \to \prod_{i=1}^h [0 , f_i a_i]$, $x \to $ Deg$ (G(x))$.
\end{defi}

L'entier $a_i$ est défini dans la partie \ref{hecke_cas_A} dans le cas (A), et on note $a_i =  g$ dans le cas (C). On notera également Deg$_i$ la $i$-ième composante de la fonction Deg. Pour démontrer les propriétés de la fonction Deg, nous allons nous ramener à l'espace $X_{Iw}^{an}$ à l'aide de l'application $G$. Les opérateurs $U_{\pi_i}$ ne commutent pas avec $G$, mais nous avons néanmoins la propriété suivante.

\begin{prop}
Soit $1 \leq i \leq h$, et $x \in X_{0,m}^{an}$. Alors $G(U_{\pi_i}(x)) \subset U_{\pi_i} ( G(x))$.
\end{prop}

\begin{proof}
Pour simplifier les notations, plaçons-nous dans le cas (C). Soit $x=(A,\lambda,\iota,\eta,H_{\bullet})$ le point de $X_{0,m}^{an}$ et $L$ un sous-groupe totalement isotrope de $A[\pi_i]$, supplémentaire générique de $H_{i,g}$. Soit $y =(A/L,\lambda',\iota',\eta',H_{\bullet}')$ le point de $U_{\pi_i}(x)$ correspondant à $L$. Le sous-groupe $H_{i,j}'$ est donc égal à $(H_{i,j} + L) / L$, pour tout $1 \leq j \leq g$. Le point $z=G(y) \in X_{Iw}^{an}$ est obtenu en quotientant la variété abélienne $A/L$ par $H'^{(m-1)} = \oplus_{k=1}^h H_k'^{(m-1)}$, avec $H_k'^{(m-1)} = H_{k,g}' [\pi_k^{m-1}]$ pour tout $k$. Comme $H'^{(m-1)}$ est l'image de $H^{(m-1)}$ dans $A/L$, on a $z=(A / (L + H^{(m-1)}) ,\lambda'',\iota'',\eta'',H_{\bullet}'')$, avec $H_{j,k}''$ égal à l'image de $H_{j,k}$ dans $A / (L + H^{(m-1)})$. \\
D'un autre côté, le point $G(x) \in X_{Iw}^{an}$ est égal à $(A / H^{(m-1)},\lambda^0,\iota^0,\eta^0,H_{\bullet}^0)$, avec $H_{j,k}^0$ égal à l'image de $H_{j,k}$ dans $A/ H^{(m-1)}$. En particulier, $H_{i,g}^0 = H_{i,g} / H_{i,g}[\pi_i^{m-1}]$. Soit $L_0$ l'image de $L$ dans $A/ H^{(m-1)}$. On voit alors facilement que $L$ est un supplémentaire générique de $H_{i,g}^0$ : cela découle de l'égalité $(L + H_{i,g}[\pi_i^{m-1}]) \cap H_{i,g} = H_{i,g}[\pi_i^{m-1}]$. On a donc défini un point de $U_{\pi_i} (G(x))$. Or ce point est égal à $z$, donc on a bien $G(y) \in U_{\pi_i}(G(x))$. \\
La démonstration est analogue dans les autres cas.
\end{proof}

Cette proposition nous permet d'en déduire les propriétés de la fonction degré à partir de celles démontrées sur $X_{Iw}^{an}$.

\begin{coro}
Soit $1 \leq i \leq h$, $x \in X_{0,m}^{an}$ et $y \in U_{\pi_i} (x)$. Soit $x_j= Deg_j(x)$, et $y_j=Deg_j(y)$ pour $1 \leq j \leq h$. Alors
\begin{itemize}
\item $y_j=x_j$ pour $j \neq i$.
\item $y_i \geq x_i$
\end{itemize}
De plus, s'il existe $y \in U_{\pi_i}^{2e_i} (x)$ avec $Deg_i(y)=Deg_i(x)$, alors $x_i \in \frac{1}{e_i'} \mathbb{Z}$. \\
Soit $k$ un entier compris entre $0$ et $e_i' f_i a_i-1$ et $0 < \alpha < \beta <1$ deux rationnels. Alors il existe $\varepsilon > 0$ tel que $Deg_i (y) \geq Deg_i(x) + \varepsilon$, pour tout $x \in Deg_i^{-1} ([\frac{k+\alpha}{e_i'} , \frac{k+ \beta}{e_i'} ])$ et $y \in U_{\pi_i}^{2e_i} (x)$.
\end{coro}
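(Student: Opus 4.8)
The plan is to reduce all three assertions to the corresponding statements already established on $X_{Iw}^{an}$, transporting everything through the morphism $G : X_{0,m}^{an} \to X_{Iw}^{an}$. The key input is twofold. First, by the very definition of the degree on $X_{0,m}^{an}$ one has $Deg_i(x) = Deg_i(G(x))$ for every $x$ and every $i$, so that a point of $X_{0,m}^{an}$ lies in $Deg_i^{-1}(I)$ if and only if its image under $G$ does. Second, the proposition just proved gives the inclusion $G(U_{\pi_i}(x)) \subset U_{\pi_i}(G(x))$. I would first check that this inclusion iterates formally: from $G(U_{\pi_i}(x)) \subset U_{\pi_i}(G(x))$ for all $x$ one deduces by induction on $n$ that $G(U_{\pi_i}^n(x)) \subset U_{\pi_i}^n(G(x))$, since $G(U_{\pi_i}^{n}(x)) = \bigcup_{y \in U_{\pi_i}(x)} G(U_{\pi_i}^{n-1}(y))$ and each $G(y)$ already lies in $U_{\pi_i}(G(x))$.

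With these two facts in hand, the first two items follow immediately. Given $y \in U_{\pi_i}(x)$, set $x' = G(x)$ and $y' = G(y)$; then $y' \in U_{\pi_i}(x')$, so Proposition $\ref{augmente}$ (respectively its analogue in the type $(A)$ setting) applied on $X_{Iw}^{an}$ yields $Deg_j(y') = Deg_j(x')$ for $j \neq i$ and $Deg_i(y') \geq Deg_i(x')$; reading these back through $Deg_i = Deg_i \circ G$ gives the claim on $X_{0,m}^{an}$. Likewise, if $y \in U_{\pi_i}^{2e_i}(x)$ satisfies $Deg_i(y) = Deg_i(x)$, then $y' = G(y) \in U_{\pi_i}^{2e_i}(x')$ has $Deg_i(y') = Deg_i(x')$, and the refined part of the same proposition forces $Deg_i(x') \in \frac{1}{e_i'} \mathbb{Z}$, hence $x_i \in \frac{1}{e_i'} \mathbb{Z}$.

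For the last assertion I would argue in the same way, this time invoking the quantitative proposition preceding Corollary $\ref{dyna}$: it furnishes, for the fixed data $k, \alpha, \beta$, a single $\varepsilon > 0$ valid uniformly for all points of $Deg_i^{-1}([\frac{k+\alpha}{e_i'}, \frac{k+\beta}{e_i'}])$ in $X_{Iw}^{an}$. If $x \in Deg_i^{-1}([\frac{k+\alpha}{e_i'}, \frac{k+\beta}{e_i'}])$ in $X_{0,m}^{an}$ and $y \in U_{\pi_i}^{2e_i}(x)$, then $x' = G(x)$ lies in the corresponding locus of $X_{Iw}^{an}$ and $y' = G(y) \in U_{\pi_i}^{2e_i}(x')$, whence $Deg_i(y') \geq Deg_i(x') + \varepsilon$, that is $Deg_i(y) \geq Deg_i(x) + \varepsilon$, with the same $\varepsilon$. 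I do not expect a serious obstacle here: the only points requiring care are the formal iteration of the correspondence inclusion and the observation that the uniformity of $\varepsilon$ on $X_{Iw}^{an}$ transfers verbatim because $G$ preserves $Deg_i$ exactly. In particular there is no need to redo the maximum-principle argument, which is entirely absorbed into the $X_{Iw}^{an}$ statement.
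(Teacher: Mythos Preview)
Your proposal is correct and follows exactly the approach the paper intends: the corollary is stated without an explicit proof, preceded only by the sentence ``Cette proposition nous permet d'en d\'eduire les propri\'et\'es de la fonction degr\'e \`a partir de celles d\'emontr\'ees sur $X_{Iw}^{an}$'', and your argument spells out precisely this reduction via $G$ and the inclusion $G(U_{\pi_i}(x)) \subset U_{\pi_i}(G(x))$. The iteration of the correspondence inclusion and the transfer of the uniform $\varepsilon$ through $Deg_i = Deg_i \circ G$ are exactly the details the paper leaves implicit.
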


La fonction degré nous permet de définir les formes modulaires surconvergentes sur $X_{0,m}$. On définit le lieu ordinaire-multiplicatif comme $\overline{X}_{0,m}^{mult} := $ Deg$^{-1} (\{f_1 a_1\} \times \dots \times \{f_h a_h \})$.

\begin{defi}
L'espace des formes modulaires surconvergentes est défini par
$$H^0(\overline{X}_{0,m}^{an},\omega^\kappa (\chi))^\dagger := \text{colim}_\mathcal{V} H^0 (\mathcal{V},\omega^\kappa (\chi))$$
où la colimite est prise sur les voisinages stricts $\mathcal{V}$ de $\overline{X}_{0,m}^{mult}$ dans $\overline{X}_{0,m}^{an}$.
\end{defi}

Une forme modulaire surconvergente sur $X_{0,m}$ est donc définie sur un espace du type Deg$^{-1} ([f_1 a_1 - \varepsilon, f_1 a_1] \times \dots \times [f_h a_h - \varepsilon, f_h a_h])$. 

\begin{rema}
Plaçons-nous sur le lieu de bonne réduction de $X_{0,m}^{an}$, et dans le cas (C) par exemple. On a donc un schéma abélien $A$ défini sur $O_L$, l'anneau des entiers d'une extension finie de $\mathbb{Q}_p$. Le sous-groupe $H_{i,g}$ de $A[\pi_i^m]$ est totalement isotrope, et est isomorphe localement pour la topologie étale à $(O_B / \pi_i^m O_B)^g$. Pour tout $1 \leq r \leq m-1$, on a un morphisme
$$H_{i,g} / H_{i,g} [\pi_i^{m-1}] \overset{\pi_i^r}{\longrightarrow} H_{i,g} [\pi_i^{m-r}] / H_{i,g} [\pi_i^{m-r-1}]$$
qui est un isomorphisme en fibre générique. On en déduit par les propriétés de la fonction degré (voir \cite{Fa}), que deg $H_{i,g} / H_{i,g} [\pi_i^{m-1}] \leq $ deg $H_{i,g} [\pi_i^{m-r}] / H_{i,g} [\pi_i^{m-r-1}]$ pour tout $1 \leq r \leq m-1$. On voit donc que si le degré de $H_{i,g} / H_{i,g} [\pi_i^{m-1}]$ est maximal, il en est de même de $H_{i,g} [\pi_i^{m-r}] / H_{i,g} [\pi_i^{m-r-1}]$ pour tout $1 \leq r \leq m-1$, et donc le degré de $H_{i,g}$ est maximal. Cela justifie notre définition du lieu ordinaire-multiplicatif et des formes modulaires surconvergentes.
\end{rema}

$ $\\
Nous allons maintenant définir une norme sur l'espace $H^0(\mathcal{U},\omega^\kappa (\chi))$, pour tout ouvert $\mathcal{U}$ de $\overline{X}_{0,m}^{an}$. Cela revient à trouver un modèle entier pour le faisceau $\omega^\kappa (\chi)$. Puisque nous avons déjà défini un modèle entier $\widetilde{\omega}^{\kappa}$ du faisceau $\omega^\kappa$, il nous suffit de définir un modèle entier du faisceau inversible $\mathcal{O}_{\overline{X}_{1,m}^{an}} (\chi)$. On rappelle que $\mathcal{O}_{\overline{X}_{1,m}^{an}} (\chi) = F_* \mathcal{O}_{\overline{X}_{1,m}^{an}} [\chi]$, où $F$ est le morphisme $\overline{X}_{1,m}^{an} \to \overline{X}_{0,m}^{an}$. Le faisceau structural $\mathcal{O}_{\overline{X}_{1,m}^{an}}$ est canoniquement muni d'un modèle entier $\widetilde{\mathcal{O}}_{\overline{X}_{1,m}^{an}}$. On définit alors

$$\widetilde{\mathcal{O}}_{\overline{X}_{1,m}^{an}} (\chi) := F_* \widetilde{\mathcal{O}}_{\overline{X}_{1,m}^{an}} [\chi]$$

Le sous-faisceau $\widetilde{\mathcal{O}}_{\overline{X}_{1,m}^{an}} (\chi)$ définit donc un modèle entier pour le faisceau $\mathcal{O}_{\overline{X}_{1,m}^{an}} (\chi)$. On définit
$$\widetilde{\omega}^\kappa (\chi) := \widetilde{\omega}^\kappa \otimes_{\widetilde{\mathcal{O}}_{\overline{X}_{0,m}^{an}}} \widetilde{\mathcal{O}}_{\overline{X}_{1,m}^{an}} (\chi)$$

C'est un sous-faisceau de $\omega^\kappa (\chi)$, et cela nous permet de définir une norme sur $H^0(\mathcal{U},\omega^\kappa (\chi))$, pour tout ouvert $\mathcal{U}$ de $\overline{X}_{0,m}^{an}$, et donc sur les opérateurs agissant sur ces espaces.

\subsection{Classicité}

Les méthodes développées dans les parties précédentes permettent de démontrer un théorème de classicité pour les formes modulaires surconvergentes dont le poids est grand devant la pente.

\begin{theo} \label{theogen_gen}
Soit $f$ une forme modulaire surconvergente sur $X_{0,m}$, de poids $\kappa$ et de nebentypus $\chi$. On suppose que $f$ est propre pour les opérateurs $U_{\pi_i}$, de valeurs propres $\alpha_i$, et que le poids $\kappa$ est grand devant les valuations des $\alpha_i$ au sens du théorème $\ref{theogen}$ dans le cas (C), et du théorème $\ref{theogen_A}$ dans le cas (A). Alors $f$ est classique.
\end{theo}

La démonstration est entièrement analogue à celle des théorèmes précédents. La forme modulaire $f$ est définie sur une partie de $\overline{X}_{0,m}^{an}$. On prolonge $f$ à $X_{0,m}^{an}$ : pour ce faire, on prolonge $f$ dans chaque direction en utilisant le fait que $f$ est propre pour $U_{\pi_i}$. On utilise tout d'abord le fait que $f$ se prolonge automatiquement à certaine zone du type Deg$_i > f_i a_i - 1/e_i'$. Ensuite, on décompose l'opérateur de Hecke sur $U_{\pi_i}$ sur la zone restante, construit les séries de Kassaei, et recolle celles-ci pour prolonger $f$ à la zone restante. On applique ensuite le principe d'extension (théorème $\ref{extension}$) pour conclure que $f$ est classique. \\
La seule proposition à prouver est le fait que les opérateurs $\alpha_i^{-1} U_{\pi_i}^{bad}$ définis sont bien de norme strictement inférieure à $1$.

\begin{prop}
Soit $T$ un opérateur égal à un certain $U_{\pi_i}^{bad}$. On suppose que l'image de cet opérateur est incluse dans Deg$_i^{-1}([0,f_i a_i - c])$ pour un certain $c \geq 0$. Alors la majoration de la norme de $U_{\pi_i}^{bad}$ obtenue dans la proposition $\ref{lemnorm}$ ou $\ref{lemnorm_A}$ reste valable.
\end{prop}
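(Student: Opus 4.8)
Le plan est de ramener ce calcul de norme exactement \`a celui d\'ej\`a effectu\'e dans les propositions \ref{lemnorm} et \ref{lemnorm_A}, en isolant la contribution du nebentypus. Je commencerais par rappeler que, \`a la constante $p^{-N_i}$ pr\`es, l'op\'erateur $U_{\pi_i}^{bad}$ agissant sur $\omega^\kappa(\chi)$ s'\'ecrit comme la compos\'ee d'une restriction, du morphisme $p^*(\kappa)(\chi) : p_2^* \omega^\kappa(\chi) \to p_1^* \omega^\kappa(\chi)$, et d'une trace le long de $p_1$ ; comme pr\'ec\'edemment, restriction et trace sont de norme $\leq 1$, de sorte que tout se ram\`ene \`a majorer la norme de $p^*(\kappa)(\chi)$.

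L'observation centrale serait que, vu la d\'efinition $\omega^\kappa(\chi) = \omega^\kappa \otimes_{\mathcal{O}_{X_{0,m}}} \mathcal{O}_{X_{1,m}}(\chi)$ et celle de la structure enti\`ere $\widetilde{\omega}^\kappa(\chi) = \widetilde{\omega}^\kappa \otimes \widetilde{\mathcal{O}}_{\overline{X}_{1,m}^{an}}(\chi)$, le morphisme $p^*(\kappa)(\chi)$ se factorise en le produit tensoriel du morphisme $p^*(\kappa)$ d\'ej\`a born\'e et du morphisme sur la partie nebentypus $p_2^* \mathcal{O}_{X_{1,m}}(\chi) \to p_1^* \mathcal{O}_{X_{1,m}}(\chi)$. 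La norme \'etant multiplicative pour ce produit tensoriel de faisceaux inversibles munis de leurs structures enti\`eres, il suffirait de v\'erifier que le facteur nebentypus est de norme exactement $1$.

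Pour \'etablir ce dernier point, j'utiliserais que ce morphisme provient de l'isomorphisme canonique de sch\'emas $C_{i,1} \simeq C_{i,1}'$ donn\'e par $P_\bullet \mapsto P_\bullet'$, o\`u les $P_\bullet'$ sont les images des $P_\bullet$ dans $A/L$. Cet isomorphisme identifie les rev\^etements \'etales $q$ et $q'$ de $C_i$, donc $q_* \mathcal{O}_{C_{i,1}}(\chi)$ et $q'_* \mathcal{O}_{C_{i,1}'}(\chi)$, de mani\`ere compatible avec la structure enti\`ere canonique $\widetilde{\mathcal{O}}$, un isomorphisme de sch\'emas pr\'eservant toujours le faisceau structural et son sous-faisceau des fonctions de norme $\leq 1$. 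En raisonnant point par point au-dessus d'un $O_L$-point, l'application $P_\bullet \mapsto P_\bullet'$ \'etant induite par l'isog\'enie $A \to A/L$ d\'efinie sur $O_L$, la composante $\chi$-isotypique enti\`ere est pr\'eserv\'ee, ce qui donnerait l'isom\'etrie voulue. La norme de $p^*(\kappa)(\chi)$ co\"inciderait alors avec celle de $p^*(\kappa)$, et les majorations des propositions \ref{lemnorm} et \ref{lemnorm_A} s'appliqueraient telles quelles.

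Le principal obstacle serait pr\'ecis\'ement de justifier rigoureusement que l'isomorphisme $C_{i,1} \simeq C_{i,1}'$ respecte les structures enti\`eres canoniques, et non seulement les faisceaux en fibre g\'en\'erique : ceci repose sur son caract\`ere alg\'ebrique --- il provient d'un morphisme des sch\'emas ab\'eliens sous-jacents, et non d'une simple identification rigide-analytique --- garantissant qu'il envoie le r\'eseau $\widetilde{\mathcal{O}}$ des fonctions born\'ees par $1$ sur lui-m\^eme. Une fois ce point acquis, le reste de la d\'emonstration est formel.
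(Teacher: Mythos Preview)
Your proposal is correct and follows essentially the same approach as the paper: reduce to bounding $p^*(\kappa)(\chi)$, factor it as the tensor product of $p^*(\kappa)$ (already bounded) with the nebentypus morphism $p_2^* \mathcal{O}_{X_{1,m}}(\chi) \to p_1^* \mathcal{O}_{X_{1,m}}(\chi)$, and observe that the latter is a natural isomorphism of norm $1$. You actually supply more justification than the paper does for this last point, by tracing it back to the scheme isomorphism $C_{i,1} \simeq C_{i,1}'$ and its compatibility with $\widetilde{\mathcal{O}}$; the paper simply asserts that a natural isomorphism has norm $1$.
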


\begin{proof}
En raisonnant comme dans les propositions citées, il suffit de majorer la norme du morphisme
$$q^*(\kappa) (\chi) : p_2^* \omega^\kappa (\chi) \to p_1^* \omega^\kappa (\chi)$$
La norme du morphisme $p_2^* \omega^\kappa \to p_1^* \omega^\kappa$ a été majorée dans les propositions précédentes. Le morphisme $p_2^* \mathcal{O}_{X_{1,m}} (\chi) \to p_1^* \mathcal{O}_{X_{1,m}} (\chi)$ étant un isomorphisme naturel, il est de norme égale à $1$. Cela donne la majoration pour la norme de $q^* (\kappa) (\chi)$.
\end{proof}

\section{Appendice}

\subsection{Schémas semi-abéliens} \label{semiab}

Nous rappelons dans cette section la définition et certaines propriétés des schémas semi-abéliens. On peut se référer à \cite{F-C} pour plus de détails.

\begin{defi}
Soit $S$ un schéma. Un schéma semi-abélien $G \to S$ est un schéma en groupes commutatif qui est lisse et séparé, et tel que pout tout point $s \in S$, la fibre $G_s$ de $G$ en $s$ est l'extension d'un tore $T_s$ par une variété abélienne $A_s$ : 
$$ 0 \to T_s \to G_s \to A_s \to 0$$
\end{defi}

Nous avons un théorème de réduction semi-stable.

\begin{theo}[\cite{F-C} Théorème I.2.6]
Soit $V$ un anneau de valuation, de corps de fraction $K$, et $G_K$ une variété semi-abélienne sur $K$. Alors il existe une extension finie $V'$ de $V$, de corps des fractions $K'$, telle que $G_{K'} := G_K \otimes_K K'$ s'étende en un schéma semi-abélien sur $V'$.
\end{theo}

Nous avons également une propriété des extensions des morphismes. Si $L$ est une extension finie de $\mathbb{Q}_p$, d'anneau des entiers $O_L$, et si $G_1$ et $G_2$ sont deux schémas semi-abéliens sur $O_L$, alors tout morphisme en fibre générique $G_1 \otimes_{O_L} L \to G_2 \otimes_{O_L} L$ s'étend de manière unique en un morphisme $G_1 \to G_2$.

\begin{prop}[\cite{F-C} Proposition I.2.7]
Soit $S$ un schéma noethérien normal, et $G_1,G_2$ deux schémas semi-abéliens sur $S$. On suppose que sur un ouvert dense $U$ de $S$ il existe un morphisme $\phi_U : G_1 \times U \to G_2 \times U$. Alors $\phi_U$ s'étend de manière unique en un morphisme $\phi : G_1 \to G_2$.
\end{prop}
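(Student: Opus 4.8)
The plan is to prove uniqueness first, then reduce existence to the one-dimensional situation already recalled just above, and finally overcome the non-properness of $G_2$ by exploiting its group-scheme structure. The logical skeleton is: separatedness gives uniqueness and locality; normality reduces the existence problem to discrete valuation rings, where the stated extension property applies; and a theorem of Weil on rational maps into group schemes bridges the gap from codimension one to the whole base.

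\emph{Uniqueness.} Since $G_2 \to S$ is separated, its diagonal is a closed immersion, so for any two extensions $\phi,\phi'$ the equalizer is a closed subscheme of $G_1$ containing the open subscheme $G_{1,U} := G_1 \times_S U$. Because $S$ is normal, hence reduced, and $G_1 \to S$ is smooth and surjective (it has the unit section), $G_1$ is reduced and $G_{1,U}$ is dense in $G_1$ ($U$ being dense in $S$ and the map open). A reduced scheme has no nonzero nilpotents, so a closed subscheme whose support is all of $G_1$ equals $G_1$; thus the equalizer is $G_1$ and $\phi=\phi'$. In particular the construction of $\phi$ is local on $S$ and local extensions glue, so I may assume $S=\operatorname{Spec} R$ with $R$ a normal Noetherian domain (a normal Noetherian scheme being a finite disjoint union of integral normal ones).

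\emph{Reduction to codimension one.} I regard $\phi_U$ as an $S$-rational map $\psi : G_1 \dashrightarrow G_2$ between smooth separated $S$-group schemes. Let $\mathfrak p$ be a height-one prime of $R$; by normality $\mathcal{O}_{S,\mathfrak p}$ is a discrete valuation ring. The extension property for semi-abelian schemes over a discrete valuation ring (recalled above in the case of $O_L$, and valid over an arbitrary discrete valuation ring by the same Néron-type argument) yields an extension of $\phi_U$ over $\operatorname{Spec}\mathcal{O}_{S,\mathfrak p}$. Hence the open domain of definition $V\subseteq G_1$ of $\psi$ contains every point lying over a point of $S$ of codimension at most one; since $G_1\to S$ is flat, any codimension-one point of $G_1$ maps to a point of $S$ of codimension at most one, so $V$ contains all codimension-one points of $G_1$, i.e. $\psi$ is defined in codimension one.

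\emph{Conclusion.} Here lies the crux: $G_2$ is only separated, not proper, so one cannot close up $\psi$ by the valuative criterion of properness. Instead I invoke Weil's extension theorem for rational maps into group schemes (Bosch, L\"utkebohmert and Raynaud, \emph{N\'eron Models}, \S4.4): over the normal base $S$, an $S$-rational map from the smooth $S$-scheme $G_1$ into the smooth separated $S$-group scheme $G_2$ which is defined in codimension one extends to a morphism on all of $G_1$. Applying this to $\psi$ produces the desired morphism $\phi:G_1\to G_2$ restricting to $\phi_U$ over $U$; and if $\phi_U$ is a homomorphism then so is $\phi$, since the two morphisms $\phi\circ m_{G_1}$ and $m_{G_2}\circ(\phi\times\phi)$ from $G_1\times_S G_1$ to $G_2$ agree on the dense open $G_{1,U}\times_U G_{1,U}$, hence everywhere by the uniqueness argument. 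The main obstacle is precisely this last step: extending past a non-proper target, which is exactly where the group-scheme structure of $G_2$—encoded in Weil's theorem forcing the indeterminacy locus to be of pure codimension one, hence empty—is indispensable, and where a general separated target would fail.
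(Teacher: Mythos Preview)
The paper does not give its own proof of this proposition; it is quoted from Faltings--Chai as part of the appendix collecting background on semi-abelian schemes, so there is no proof in the paper to compare your attempt against.

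Your argument is the standard one and is correct in outline: separatedness for uniqueness, normality to reduce existence to codimension one via the DVR case, and Weil's extension theorem to kill the remaining indeterminacy. Two points deserve tightening. First, the sentence about $O_L$ that you invoke for the DVR step is not, in the paper, an independent input: it is the announcement of the special case of the very proposition you are proving. You are right that a separate N\'eron-type argument supplies the DVR case, but you should cite that directly (e.g.\ the N\'eron mapping property for semi-abelian schemes, or BLR~\S7.4) rather than pointing back to the paper, to avoid apparent circularity. Second, passing from ``$\phi_U$ extends over $\operatorname{Spec}\mathcal{O}_{S,\mathfrak p}$'' to ``the corresponding codimension-one points of $G_1$ lie in the \emph{open} domain of definition of $\psi$'' requires a spreading-out step: since $G_1$, $G_2$, and the morphism are of finite presentation, the extension over the local ring descends to an extension over some open neighbourhood of $\mathfrak p$ in $S$. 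This is routine but should be said. With these two clarifications the proof is complete.
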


Soit $S$ un schéma, et $G$ un schéma semi-abélien sur $S$. Pour tout $s \in S$, on note $rg(s)$ le rang de la partie torique $T_s$.

\begin{prop} [\cite{F-C} Remarque I.2.4 Corollaire I.2.11]
La fonction $s \to rg(s)$ est semi-continue supérieurement. De plus, si cette fonction est localement constante, alors $G$ est globalement extension d'un tore par un schéma abélien. En particulier, $G$ est un tore (resp. un schéma abélien) si et seulement si pour tout $s \in S$, $G_s$ est un tore (resp. une variété abélienne).
\end{prop}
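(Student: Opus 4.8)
Cette proposition est cit\'ee de \cite{F-C} ; pour en esquisser une d\'emonstration, je distinguerais les trois \'enonc\'es qu'elle contient et les traiterais s\'epar\'ement : la semi-continuit\'e sup\'erieure de la fonction $rg$, le fait que sa constance locale entra\^ine que $G$ soit globalement une extension, puis le cas particulier qui en d\'ecoulera. Je me ram\`enerais d'embl\'ee au cas o\`u $S$ est localement noeth\'erien, et, pour le premier point, travaillerais Zariski-localement. On notera que la dimension relative $g = rg(s) + \dim A_s$ est localement constante, $G \to S$ \'etant lisse.

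Pour la semi-continuit\'e sup\'erieure, l'id\'ee directrice est de lire le rang torique sur la torsion premi\`ere \`a la caract\'eristique r\'esiduelle. Fixant un premier $\ell$ inversible, le sch\'ema en groupes $G[\ell^k]$ est quasi-fini et s\'epar\'e sur $S$, et sur toute fibre g\'eom\'etrique on a $\# G_{\bar{s}}[\ell^k] = \ell^{k(2 \dim A_s + rg(s))} = \ell^{k(2g - rg(s))}$. Comme le cardinal des fibres d'un morphisme quasi-fini s\'epar\'e est semi-continu inf\'erieurement (des points de torsion peuvent dispara\^itre par sp\'ecialisation, mais aucun n'appara\^it), la fonction $s \mapsto 2g - rg(s)$ serait semi-continue inf\'erieurement, d'o\`u la semi-continuit\'e sup\'erieure de $rg$. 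Pour traiter tous les points quelle que soit la caract\'eristique r\'esiduelle, je v\'erifierais la stabilit\'e par sp\'ecialisation le long d'un trait, o\`u l'on peut toujours choisir $\ell$ inversible ; de fa\c{c}on \'equivalente, on peut invoquer qu'un tore se sp\'ecialise en un tore, de sorte que $rg$ ne peut que cro\^itre par sp\'ecialisation.

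Supposons maintenant $rg$ localement constant, de valeur $r$. L'\'etape d\'ecisive, et le principal obstacle, serait de construire un sous-tore ferm\'e $T \hookrightarrow G$, plat sur $S$, dont la fibre en chaque point $s$ est le tore maximal de $G_s$. C'est ici que la constance de $r$ est indispensable : elle garantit que les sous-groupes maximaux de type multiplicatif des fibres, tous de m\^eme rang, se recollent en un sous-sch\'ema en groupes $T$ de type multiplicatif, plat sur $S$ (on utilise la rigidit\'e des groupes de type multiplicatif et la repr\'esentabilit\'e du foncteur des homomorphismes depuis $\mathbb{G}_m$). Le tore $T$ \'etant construit, le quotient fppf $A := G/T$ serait repr\'esentable, lisse et s\'epar\'e, de dimension relative $g - r$, \`a fibres des vari\'et\'es ab\'eliennes ; un sch\'ema en groupes lisse et s\'epar\'e de pr\'esentation finie \`a fibres propres et connexes \'etant un sch\'ema ab\'elien, la suite $0 \to T \to G \to A \to 0$ serait l'extension cherch\'ee.

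Le cas particulier se d\'eduirait alors des deux points pr\'ec\'edents. Si toutes les fibres $G_s$ sont des tores, alors $rg(s) = g$ pour tout $s$, donc $rg$ est constant ; $G$ est extension d'un tore $T$ par un sch\'ema ab\'elien de dimension relative $g - g = 0$, lequel est donc nul, et $G = T$ est un tore. Sym\'etriquement, si toutes les fibres sont des vari\'et\'es ab\'eliennes, $rg \equiv 0$ est constant, la partie torique est triviale, et $G = A$ est un sch\'ema ab\'elien. La r\'eciproque de ces deux \'equivalences \'etant \'evidente, la proposition serait \'etablie.
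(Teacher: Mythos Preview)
The paper does not give its own proof of this proposition: it is stated in the appendix purely as a citation from \cite{F-C} (Remarque I.2.4 and Corollaire I.2.11), with no argument supplied. There is therefore nothing in the paper to compare your proposal against.

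That said, your sketch is essentially the Faltings--Chai argument. Reading the toric rank off the $\ell$-power torsion (for $\ell$ invertible), using lower semi-continuity of fibre cardinality for an \'etale quasi-finite separated morphism, and then invoking rigidity of tori (SGA3) to globalise the maximal subtorus when $rg$ is locally constant, is exactly how \cite{F-C} proceeds. One small point worth tightening: stability under specialisation along traits gives upper semi-continuity only once you know the level sets $\{rg \geq n\}$ are constructible; in practice this comes for free from the torsion description (the fibre cardinality of $G[\ell^k] \to S$ is constructible over the open where $\ell$ is invertible, and two choices of $\ell$ cover $S$), but you should say so explicitly rather than leave it implicit.
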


Si $G$ est un schéma semi-abélien sur $O_L$, et si on note $rg(\eta)$ et $rg(s)$ les rangs de la partie torique de $G$ respectivement en fibre générique et en fibre spéciale, alors on a $rg(\eta) \leq rg(s)$. \\
De plus, la partie torique en fibre générique de $G$ peut s'étendre en un tore.

\begin{prop} [\cite{F-C} Proposition I.2.9]
Soit $S$ un schéma noethérien normal, $G$ un schéma semi-abélien sur $S$, et $U$ un ouvert dense de $S$. Si $H_U$ est un sous-groupe fermé de $G \times U$, qui est un tore sur $U$, alors l'adhérence de $U$ dans $G$ est un tore $H \to S$ contenu dans $G$.
\end{prop}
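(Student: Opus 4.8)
Le plan est de prendre pour $H$ l'adh\'erence sch\'ematique de $H_U$ dans $G$ (c'est l'objet que d\'esigne l'\'enonc\'e), et de montrer que ce sous-sch\'ema ferm\'e est un tore sur $S$. Comme l'assertion est locale sur $S$, je supposerais d'abord $S$ connexe, donc int\`egre puisqu'il est normal ; notons $\eta$ son point g\'en\'erique, qui appartient \`a $U$, et $r$ le rang du tore $H_\eta$. La strat\'egie consiste \`a construire explicitement un tore $\widetilde{H}$ sur $S$ muni d'une immersion ferm\'ee dans $G$ prolongeant $H_U \hookrightarrow G_U$, puis \`a l'identifier avec $H$. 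La difficult\'e de principe est qu'un tore quelconque sur $U$ ne s'\'etend pas en un tore sur $S$ ; c'est le plongement dans le sch\'ema semi-ab\'elien $G$ qui va le rendre possible.

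Pour construire $\widetilde{H}$, je commencerais par d\'eployer $H_U$ : il existe un rev\^etement fini \'etale $U' \to U$ tel que $H_{U'} \simeq \mathbb{G}_{m,U'}^{r}$, et je noterais $S'$ la normalisation de $S$ dans $U'$, qui est finie sur $S$ et \'etale au-dessus de $U$. Chacune des $r$ inclusions de coordonn\'ees $\mathbb{G}_{m,U'} \hookrightarrow H_{U'} \hookrightarrow G_{U'}$ est un homomorphisme de sch\'emas semi-ab\'eliens d\'efini sur l'ouvert dense $U' \subset S'$ ; d'apr\`es \cite{F-C} (proposition I.2.7, rappel\'ee ci-dessus), il se prolonge de mani\`ere unique en un homomorphisme $\mathbb{G}_{m,S'} \to G_{S'}$. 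Les images de ces prolongements commutent au-dessus de $U'$, donc partout puisque $G_{S'}$ est s\'epar\'e sur le sch\'ema int\`egre $S'$, ce qui fournit un homomorphisme $\psi : \mathbb{G}_{m,S'}^{r} \to G_{S'}$ prolongeant l'immersion ferm\'ee initiale. L'essentiel est alors de voir que $\psi$ est une immersion ferm\'ee : par rigidit\'e des tores, un tel homomorphisme, injectif sur l'ouvert dense $U'$, reste fibre \`a fibre une immersion d'un tore de rang $r$, l'in\'egalit\'e $rg(\eta) \leq rg(s)$ appliqu\'ee \`a $G_{S'}$ garantissant que la partie torique de chaque fibre $G_s$ est de rang au moins $r$ et peut donc accueillir cette image. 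L'image $\widetilde{H}_{S'}$ de $\psi$ a alors toutes ses fibres qui sont des tores, et la propri\'et\'e de semi-continuit\'e du rang torique (rappel\'ee plus haut) montre que $\widetilde{H}_{S'}$ est un tore sur $S'$.

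Il resterait \`a redescendre \`a $S$. L'unicit\'e du prolongement dans \cite{F-C} (proposition I.2.7) rend $\widetilde{H}_{S'}$ invariant sous le groupe de Galois de $U'/U$ ; en l'identifiant \`a l'adh\'erence sch\'ematique de $H_{U'}$ dans $G_{S'}$, je v\'erifierais qu'il provient de l'adh\'erence sch\'ematique $H$ de $H_U$ dans $G$, et que cette derni\`ere est donc, fibre \`a fibre, un tore de rang $r$. La caract\'erisation rappel\'ee ci-dessus, selon laquelle un sch\'ema semi-ab\'elien dont toutes les fibres sont des tores est un tore, permettrait alors de conclure que $H \to S$ est un tore contenu dans $G$.

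Le point le plus d\'elicat sera le comportement en codimension $1$. En un point $s$ de codimension $1$, l'anneau local est un anneau de valuation discr\`ete, et il faut exclure que le sous-tore g\'en\'erique ne d\'eg\'en\`ere en se sp\'ecialisant partiellement dans la partie ab\'elienne de $G_s$ ou en perdant du rang torique ; c'est exactement ici que sert le plongement dans $G$, via $rg(\eta) \leq rg(s)$ et la rigidit\'e des tores. Une fois ce cas trait\'e, l'extension en codimension $\geq 2$ ne pose pas de probl\`eme gr\^ace \`a la normalit\'e de $S$ (puret\'e de Zariski--Nagata pour le faisceau \'etale des caract\`eres, qui est lisse sur $U$), et la descente le long de $S' \to S$ d\'ecoule de l'unicit\'e des prolongements.
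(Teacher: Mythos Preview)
The paper does not actually prove this proposition: it appears in the appendix on semi-abelian schemes with an explicit citation to \cite{F-C}, Proposition I.2.9, and no argument is supplied. There is therefore no proof in the paper to compare your proposal against.

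That said, your strategy --- pass to a finite \'etale cover of $U$ splitting $H_U$, extend each coordinate homomorphism $\mathbb{G}_m \hookrightarrow G$ to the whole base using Proposition I.2.7, then descend --- is the natural approach and is in the spirit of the argument in \cite{F-C}. Two points would need more care before it is complete. First, for Proposition I.2.7 to apply over the normalisation $S'$ of $S$ in $U'$, you need $S'$ noetherian and normal; normality is automatic, but noetherianity requires $S' \to S$ to be finite, which in turn requires $S$ to be Nagata (universally Japanese). This is not part of the stated hypotheses and should either be assumed or circumvented. Second, the sentence ``par rigidit\'e des tores, un tel homomorphisme, injectif sur l'ouvert dense $U'$, reste fibre \`a fibre une immersion d'un tore de rang $r$'' is precisely the heart of the matter and is asserted rather than justified: rigidity of homomorphisms between tori does not apply directly since the target $G_{S'}$ is not a torus, and the inequality $rg(\eta)\le rg(s)$ only says each fibre can \emph{accommodate} a rank-$r$ subtorus, not that $\psi_s$ is injective. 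One must argue separately that $\ker\psi$ is trivial --- for instance by showing it is a closed subgroup scheme of $\mathbb{G}_{m,S'}^r$ over a normal base, flat of multiplicative type, and trivial on the dense open $U'$, hence trivial everywhere --- and then that the resulting monomorphism from a torus into a semi-abelian scheme is a closed immersion. These gaps are fillable, but they are exactly where the substance of the proposition lies.
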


Mumford a établi une construction pour construire certains schémas semi-abéliens, dont la fibre générique est abélienne. Cela généralise la construction de la courbe de Tate. Si $\widetilde{G}$ est globalement extension d'un tore $T$ par un schéma abélien $A$ sur $O_L$, et si $Y$ est un faisceau étale de groupes abéliens libres sur $O_L$ de rang $rg(T)$ avec un morphisme $i : Y \times L \to \widetilde{G} \times L$ vérifiant certaines conditions (voir \cite{F-C} chapitre III), alors Mumford a construit un schéma semi-abélien $G$, que l'on peut voir comme le quotient de $\widetilde{G}$ par $Y$. Cette construction est en fait une équivalence entre certaines catégories.

\begin{theo} [\cite{F-C} Corollaire III.7.2]
Soit $G$ un schéma semi-abélien sur $O_L$ dont la fibre générique est abélienne. Alors il existe un schéma en groupes $\widetilde{G}$ sur $O_L$, globalement extension d'un tore $T$ par un schéma abélien $A$, un faisceau étale $Y$ de groupes abéliens libres de rang $rg(T)$, et un morphisme $i : Y \to \widetilde{G} \times L$, tel que $G$ soit obtenu en quotientant $\widetilde{G}$ par $Y$ via la construction de Mumford. De plus, si $\omega_G$ et $\omega_{\widetilde{G}}$ désignent les faisceaux conormaux de $G$ et $\widetilde{G}$, alors on a un isomorphisme $\omega_G \simeq \omega_{\widetilde{G}}$.
\end{theo}

La construction de Mumford donne une description explicite des groupes de torsion des schémas semi-abéliens considérés.

\begin{prop} [\cite{F-C} Corollaire III.5.11]
Soit $\widetilde{G},Y$ et $G$ comme précédemment. Alors, pour tout $n \geq 1$ on a une suite exacte pour tout $s \in$ Spec$(O_L)$
$$ 0 \to \widetilde{G} [n] \times \kappa(s) \to G [n] \times \kappa(s) \to \frac{1}{n} Y_s / Y_s \to 0 $$
où $\kappa(s)$ est le corps résiduel en $s$ et $Y_s = \{ y \in Y, y \in \widetilde{G} (O_{L,s}) \}$.
\end{prop}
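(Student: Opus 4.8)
The plan is to deduce the torsion sequence from Mumford's uniformization by a single application of the snake lemma. First I would isolate the exact input coming from the construction recalled just above (following \cite{F-C}, Chapter III): after localizing at $s$, the scheme $G$ is the quotient of $\widetilde{G}$ by the lattice, and the whole point of the notation $Y_s=\{y\in Y : y\in\widetilde{G}(O_{L,s})\}$ is that only this sublattice extends to integral sections over $O_{L,s}$. I would therefore take as given the short exact sequence of fppf abelian sheaves over $O_{L,s}$
$$ 0 \to Y_s \to \widetilde{G} \to G \to 0, $$
in which $Y_s$ is regarded as a constant \'etale sheaf of free abelian groups. This is the integral form of the degeneration data, and producing it with exactly the lattice $Y_s$ (rather than the full $Y$) is the substantive content of the construction.

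Next I would apply the snake lemma to the ladder obtained by multiplying this sequence by $n$. Since $\widetilde{G}$ and $G$ are semi-abelian, the isogeny $[n]$ is surjective as a map of fppf sheaves (on the torus and the abelian part separately, hence on the extension), so both cokernels $\widetilde{G}/n\widetilde{G}$ and $G/nG$ vanish; on the free lattice $[n]$ is injective. The snake lemma then reads
$$ 0 \to Y_s[n] \to \widetilde{G}[n] \to G[n] \to Y_s/nY_s \to \widetilde{G}/n\widetilde{G}, $$
and using $Y_s[n]=0$ together with $\widetilde{G}/n\widetilde{G}=0$ this collapses to a short exact sequence
$$ 0 \to \widetilde{G}[n] \to G[n] \to Y_s/nY_s \to 0, $$
the last term being canonically identified with $\frac{1}{n}Y_s/Y_s$ via multiplication by $n$.

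Finally I would pass to the fibre at $s$. The statement is phrased fibrally precisely because, when the toric rank of $G_s$ jumps, $G[n]$ is only quasi-finite flat over $O_{L,s}$; but over the residue field $\kappa(s)$ the fibres $\widetilde{G}[n]\times\kappa(s)$ and $G[n]\times\kappa(s)$ are honest finite group schemes. Base change of the fppf-exact sequence to $\kappa(s)$ gives right-exactness immediately, and injectivity of $\widetilde{G}[n]\times\kappa(s)\to G[n]\times\kappa(s)$ follows since a point in the kernel would differ from the origin by an $n$-torsion element of $Y_s$, of which there are none. An order count is consistent: writing $r=rg(T)$ and letting $t_s$ denote the toric rank of $G_s$, the fibre $\widetilde{G}[n]\times\kappa(s)$ has order $n^{2\dim A+r}$ and $\frac{1}{n}Y_s/Y_s$ has order $n^{r-t_s}$, whose product $n^{2\dim A+2r-t_s}$ is exactly the order of $G[n]\times\kappa(s)$.

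The main obstacle is not the homological bookkeeping above but the very first step: establishing the integral uniformization $0\to Y_s\to\widetilde{G}\to G\to0$ over $O_{L,s}$ with precisely the lattice $Y_s$, and controlling how the toric rank of $G_s$ jumps (to $r$ minus the rank of $Y_s$) as $s$ specializes. That compatibility of Mumford's construction with the special fibres is where all the geometry lies; once it is granted, the torsion sequence is a formal consequence.
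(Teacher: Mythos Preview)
The paper does not prove this proposition: it is quoted verbatim from Faltings--Chai (Corollaire III.5.11) and only followed by an example (the Tate curve) and a remark on the schematic closure $H_n$. So there is no ``paper's own proof'' to compare against; the reference is the proof.

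Your overall strategy---apply the snake lemma to multiplication by $n$ on a uniformization sequence and then specialize fibre by fibre---is exactly the right idea and is how the result is obtained in \cite{F-C}. You are also right that all the content lies in the first step, and your order count is consistent.

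One point deserves correction. The sequence $0\to Y_s\to\widetilde{G}\to G\to 0$ does \emph{not} exist as an exact sequence of fppf abelian sheaves on the scheme $\mathrm{Spec}\,O_{L,s}$. Already for the Tate curve over a DVR one has $Y_s=0$ at the closed point, so your sequence would force $\mathbb{G}_m\simeq G$ over $O_L$, which is false; and over the generic fibre there is no algebraic morphism $\mathbb{G}_m\to E_q$ at all---the map $\widetilde{G}\to G$ lives only in the rigid-analytic (or formal) category. What Faltings--Chai actually do is work formally (or via the relatively complete model): the uniformization is an exact sequence of formal groups over $\mathrm{Spf}\,O_{L,s}$, the snake lemma argument then gives the torsion sequence there, and one concludes because the $n$-torsion subschemes are (quasi-)finite and hence algebraize. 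If you rewrite your first step in the formal/rigid category rather than the fppf site of $\mathrm{Spec}\,O_{L,s}$, the rest of your argument goes through as written.
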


Ainsi, si $H_n$ désigne l'adhérence schématique de $\widetilde{G} [n] \times L$ dans $G[n]$, alors $H_n$ est isomorphe à $\widetilde{G}[n]$, et $G[n] / H_n$ est étale. Par exemple, si $\widetilde{G}= \mathbb{G}_m$, $Y=q^\mathbb{Z}$, alors $G$ est la courbe de Tate, et $H_n$ est isomorphe à $\mu_n$ pour tout $n$. La quantité $\frac{1}{n} Y_s / Y_s$ est dans ce cas isomorphe à $\mathbb{Z} / n \mathbb{Z}$ si $s$ est le point générique, et est nulle si $s$ est le point spécial. \\
En résumé, supposons que l'on dispose d'un schéma $G$ semi-abélien sur $L$. Alors, quitte à étendre le corps $L$, il s'étend en un schéma semi-abélien $G_0$ sur $O_L$. Soit $T_0$ le tore maximal contenu dans $G_0$, et $A_0=G_0 / T_0$ ; $A_0$ est un schéma semi-abélien dont la fibre générique est abélienne. Alors $A_0$ est obtenu par la construction de Mumford en quotientant un schéma semi-abélien $\widetilde{G}$ par un réseau étale $Y$, où $\widetilde{G}$ est globalement extension d'un tore $T_1$ par un schéma abélien $A_1$. On a alors pour tout $n \geq 1$ une suite exacte
$$ 0 \to T_0[n] \to G_0[n] \to A_0[n] \to 0$$
De plus, on a une injection $0 \to \widetilde{G}[n] \to A_0 [n]$, dont le quotient est étale, et une suite exacte
$$ 0 \to T_1[n] \to \widetilde{G}[n] \to A_1[n] \to 0$$
On peut donc filtrer le schéma en groupes $G_0[n]$, avec comme crans de filtration $T_0[n]$, $T_1[n]$, $A_1[n]$ et un schéma en groupes étale. Remarquons que les trois premiers schémas en groupes sont finis et plat sur $O_L$, alors que $G_0[n]$ n'est en général que quasi-fini et plat.

\subsection{Degrés partiels} \label{partial}

Dans cette section, nous définissons les degrés partiels pour les schémas en groupes finis et plats munis d'une action de l'anneau des entiers d'une extension finie non ramifiée de $\mathbb{Q}_p$. Nous appliquerons en particulier ces résultats pour les sous-groupes finis d'un groupe $p$-divisible. \\
Soit $F$ une extension finie non ramifiée de $\mathbb{Q}_p$ de degré $f$, et $O_F$ son anneau des entiers. On a donc $O_F = W(\mathbb{F}_{p^f})$ et $F=O_F[1/p]$. Soit $S$ l'ensemble des plongements de $F$ dans $\overline{\mathbb{Q}}_p$ ; on sait que $S$ est un groupe cyclique d'ordre $f$ engendré par le Frobenius. \\
Soit $K$ une extension finie de $\mathbb{Q}_p$ contenant $F$, et soit $H$ un schéma en groupes fini et plat d'ordre une puissance de $p$ sur $O_K$ muni d'une action de $O_F$ de hauteur $fh$. Soit $\omega_H$ le module des différentielles ; c'est un $O_K$-module de type fini muni d'une action de $O_F$. Alors, on a 
$$\omega_H = \bigoplus_{s \in S} \omega_{H,s}$$
où $\omega_{H,s}$ est le sous-module de $\omega_H$ où $O_{F}$ agit par $s$. 

\begin{defi}
Le degré partiel de $H$ relatif au plongement $s$ de $F$ est défini par 
$$\text{deg}_s H := v( \text{Fitt}_0 \text{ } \omega_{H,s})$$
où $Fitt_0$ désigne l'idéal de Fitting, et la valuation d'un idéal de $O_K$ est définie comme la valuation d'un de ses générateurs.
\end{defi}

On voit immédiatement que le degré de $H$ au sens de Fargues (voir \cite{Fa}) est égal à la somme des deg$_s H$ pour $s \in S$. Nous allons maintenant démontrer des propriétés analogues à la fonction degré pour les degrés partiels.

\begin{prop} 
Les fonctions deg$_s$ sont additives. Plus précisément, soient $H_1$, $H_2$ et $H_3$ trois groupes finis et plats d'ordre une puissance de $p$ munis d'une action de $O_F$ avec une suite exacte
$$ 0 \to H_1 \to H_2 \to H_3 \to 0$$
Alors pour tout $s \in S$
$$\text{deg}_s H_2 = \text{deg}_s H_1 + \text{deg}_s H_3 $$
\end{prop}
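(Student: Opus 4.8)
The plan is to reduce the statement to the additivity of the total degree function of $\cite{Fa}$, which is already available, by decomposing every conormal module according to the action of $O_F$.

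First I would recall the key exactness property of conormal modules established in $\cite{Fa}$: for a short exact sequence $0 \to H_1 \to H_2 \to H_3 \to 0$ of finite flat group schemes of $p$-power order over $O_K$, the induced sequence of conormal modules
$$0 \to \omega_{H_3} \to \omega_{H_2} \to \omega_{H_1} \to 0$$
is exact, the maps being those coming from the projection $H_2 \to H_3$ and the inclusion $H_1 \to H_2$. Since all three homomorphisms are $O_F$-equivariant, this is a short exact sequence of $O_F \otimes_{\mathbb{Z}_p} O_K$-modules.

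The decisive point is that $F$ is unramified, so $O_F$ is étale over $\mathbb{Z}_p$ and, because $K \supseteq F$, one has a splitting $O_F \otimes_{\mathbb{Z}_p} O_K \simeq \prod_{s \in S} O_K$ given by the idempotents $e_s$ attached to the embeddings. Multiplication by $e_s$ is an exact functor on $O_F \otimes_{\mathbb{Z}_p} O_K$-modules, and by definition $\omega_{H,s} = e_s \cdot \omega_H$. Applying $e_s$ to the exact sequence above therefore yields, for each $s \in S$, a short exact sequence of $O_K$-modules
$$0 \to \omega_{H_3,s} \to \omega_{H_2,s} \to \omega_{H_1,s} \to 0.$$
Finally I would invoke the behaviour of Fitting ideals over the discrete valuation ring $O_K$: each $\omega_{H,s}$ is a finite torsion $O_K$-module, being a direct summand of $\omega_H$, which is killed by a power of $p$; and for a short exact sequence of finite torsion $O_K$-modules the zeroth Fitting ideal is multiplicative, so $v \circ \text{Fitt}_0$ is additive. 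Applied to the last sequence this gives $\text{deg}_s H_2 = \text{deg}_s H_1 + \text{deg}_s H_3$, as desired.

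I expect the only genuinely nontrivial ingredient to be the exactness, and in particular the injectivity of $\omega_{H_3} \to \omega_{H_2}$, of the conormal sequence; this is exactly where the flatness of the group schemes enters and is the content of the result of Fargues, so I would import it rather than reprove it. Everything else, namely the idempotent decomposition and the additivity of $v \circ \text{Fitt}_0$, is formal. The only verification worth spelling out is that $O_F$-equivariance is preserved at each step, which holds because the transition maps all arise from homomorphisms of $O_F$-group schemes.
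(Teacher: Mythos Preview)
Your proof is correct and follows essentially the same approach as the paper: both decompose the exact conormal sequence $0 \to \omega_{H_3} \to \omega_{H_2} \to \omega_{H_1} \to 0$ via the idempotents of $O_F \otimes_{\mathbb{Z}_p} O_K$ and then read off the additivity from the resulting exact sequences of $\omega_{H_i,s}$. Your write-up is simply more explicit about why the decomposition is exact and why $v \circ \text{Fitt}_0$ is additive, points the paper leaves implicit.
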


\begin{proof}
On a une suite exacte de $O_K \otimes_{\mathbb{Z}_p} O_F$-modules
$$0 \to \omega_{H_3} \to \omega_{H_2} \to \omega_{H_1} \to 0$$
En décomposant cette suite exacte suivant les éléments de $S$, on en déduit des suites exactes
$$0 \to \omega_{H_3,s} \to \omega_{H_2,s} \to \omega_{H_1,s} \to 0$$
pour tout $s \in S$. Le résultat en découle.
\end{proof}

\begin{prop} \label{dual}
Soit $H$ un schéma en groupes fini et plat de d'ordre une puissance de $p$ sur $O_K$ muni d'une action de $O_F$ de hauteur $fh$. Soit $H^D$ le dual de Cartier de $H$ ; c'est encore un schéma en groupes fini et plat sur $O_K$ muni d'une action de $O_F$. Alors pour tout $s \in S$, 
$$\text{deg}_s H^D = h - \text{deg}_s H$$
En particulier, on voit que deg$_s H \in [0 , h]$. 
\end{prop}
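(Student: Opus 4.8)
I would prove this by dévissage to the case $pH=0$ and then compare the partial degrees of $H$ and $H^D$ through a resolution of $H$ by $p$-divisible groups, using the universal vector extension. The plan is as follows.

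First I would reduce to $H$ killed by $p$. By the additivity established above, applied both to $H$ and, dually, to $H^D$ (a short exact sequence $0\to H_1\to H\to H_2\to 0$ of $O_F$-stable finite flat subgroups induces $0\to H_2^D\to H^D\to H_1^D\to 0$), and since $O_F$-heights add, the asserted equality is additive in $H$. Filtering $H\supset H[p^{N-1}]\supset\dots\supset H[p]\supset 0$ by $O_F$-stable subgroups with subquotients killed by $p$, I reduce to the case $pH=0$. The point of this reduction is that $O_F/pO_F=\mathbb{F}_{p^f}$ is then a \emph{field}, so $H(\overline{\mathbb{Q}}_p)$ is automatically a free $\mathbb{F}_{p^f}$-module, of rank $h=\tfrac1f\log_p|H|$.

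Next I would realize $H$ as an isogeny kernel. Choose a $p$-divisible group $G$ over $O_K$ with $O_F$-action containing $H$ as an $O_F$-stable finite flat subgroup (one may embed $H$ into $A[p^\infty]$ for a suitable abelian scheme $A$ carrying the $O_F$-action), put $G'=G/H$ and let $u\colon G\to G'$ be the resulting isogeny, so $H=\ker u$ and $H^D=\ker\bigl(u^D\colon (G')^D\to G^D\bigr)$. The $O_F$-equivariant conormal sequence $0\to\omega_{G'}\xrightarrow{u^*}\omega_G\to\omega_H\to 0$ splits along $S$; as $u^*$ is an isomorphism over $K$, each $u^*_s$ is a square matrix and $\deg_s H=v(\det u^*_s)$. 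The key input is then the Grothendieck–Messing sequence $0\to\omega_{G^D}\to\mathbb{D}(G)\to\mathrm{Lie}(G)\to 0$, with $\mathbb{D}(G)$ the Lie algebra of the universal vector extension of $G$, locally free over $O_K$ of rank equal to the height of $G$ and functorial in $G$. The isogeny $u$ induces a morphism of such sequences with vertical maps $(u^D)^*$, $\mathbb{D}(u)$, $\mathrm{Lie}(u)$; multiplicativity of determinants in this map of short exact sequences of free $O_K$-modules gives, over each $s\in S$,
$$v(\det\mathbb{D}(u)_s)=v(\det (u^D)^*_s)+v(\det\mathrm{Lie}(u)_s).$$
The left term is $\deg_s H^D$ (the cokernel of $(u^D)^*$ is $\omega_{H^D}$), and since $\mathrm{Lie}(G)_s=(\omega_{G,s})^\vee$ as $O_K$-modules with $O_F$ acting through the \emph{same} $s$, the map $\mathrm{Lie}(u)_s$ is the transpose of $u^*_s$, so $v(\det\mathrm{Lie}(u)_s)=\deg_s H$. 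Hence $v(\det\mathbb{D}(u)_s)=\deg_s H+\deg_s H^D$.

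It then remains to show $v(\det\mathbb{D}(u)_s)=h$ for every $s$. Since $\mathbb{D}(u)$ is injective with cokernel $\mathbb{D}(H)$, this is the statement $\mathrm{len}_{O_K}\mathbb{D}(H)_s=h$. Because $pH=0$ and $H(\overline{\mathbb{Q}}_p)$ is free of rank $h$ over the field $\mathbb{F}_{p^f}$, the module $\mathbb{D}(H)$ is free of rank $h$ over $O_F\otimes_{\mathbb{Z}_p}(O_K/p)\cong\prod_{s\in S}O_K/p$, so each $\mathbb{D}(H)_s$ is free of rank $h$ over $O_K/p$ and $\mathrm{len}_{O_K}\mathbb{D}(H)_s=h$ (normalizing $v(p)=1$). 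This yields $\deg_s H+\deg_s H^D=h$, and, both summands being nonnegative, also $\deg_s H\in[0,h]$. The main obstacle is precisely this last balancedness: the equality must hold for each individual $s$, not merely on average — the average being Fargues' total duality $\deg H+\deg H^D=fh$. It is here that the hypothesis that $F$ is unramified is indispensable, since it forces $O_F/p$ to be a field and $\mathbb{D}(H)$ to be free of constant rank $h$ along $S$; over a ramified $F$ the graded pieces $\mathbb{D}(H)_s$ need not have equal length and the per-embedding statement may fail.
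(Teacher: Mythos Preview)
Your overall strategy --- reduce to $pH=0$, realize $H$ as the kernel of an $O_F$-isogeny $u\colon G\to G'$ of $p$-divisible groups, and read off the identity from the determinant of $\mathbb{D}(u)$ via the Hodge filtration $0\to\omega_{G^D}\to\mathbb{D}(G)\to\mathrm{Lie}(G)\to 0$ --- is a legitimate and conceptually attractive alternative to the paper's computation with Breuil--Kisin modules (and to the second proof the paper sketches, by d\'evissage to Raynaud's $\mathbb{F}_{p^f}$-vector space schemes). The determinant identity $v(\det\mathbb{D}(u)_s)=\deg_s H+\deg_s H^D$ is correct, and you rightly isolate the balancedness $v(\det\mathbb{D}(u)_s)=h$ as the crux.

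The gap is in your justification of that last step. You argue that because $H(\overline{\mathbb{Q}}_p)$ is free of rank $h$ over $\mathbb{F}_{p^f}$, the module $\mathrm{coker}\,\mathbb{D}(u)$ is free of rank $h$ over $O_F\otimes_{\mathbb{Z}_p}(O_K/p)$. But the generic fibre only sees the Galois/Tate side; it does not control the integral de Rham realization $\mathrm{coker}\,\mathbb{D}(u)$ over $O_K$. There is no elementary passage from ``$\mathrm{coker}\,T_p(u)$ is balanced over $O_F$'' to ``$\mathrm{coker}\,\mathbb{D}(u)$ is balanced over $O_F\otimes O_K/p$'' without invoking an integral comparison theorem. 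What actually forces the balancedness is the \emph{Frobenius on the special fibre}: reducing mod the maximal ideal, $\mathbb{D}(H_k)$ carries a $\sigma$-semilinear Frobenius which sends the $s$-component into the $\sigma s$-component and is injective after linearization, so all components have the same $k$-dimension; combined with the elementary-divisor argument (using the dual isogeny $v$ with $uv=vu=p$, which pins the divisors of $\mathbb{D}(u)_s$ to $\{1,\pi^e\}$), this gives $h_s=h$ for every $s$. This Frobenius-cycling is precisely the mechanism behind the paper's Breuil--Kisin proof (where one observes that the semilinear $\phi$ forces all the $\mathfrak{M}_i$ to have the same $k[[u]]$-rank $h$, and then the explicit description of the dual gives the formula in one line). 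So your route is genuinely different in packaging, but at the decisive point it requires the same input as the paper; the generic-fibre shortcut you propose does not supply it. A minor secondary point: the existence of an $O_F$-equivariant embedding $H\hookrightarrow G$ into a $p$-divisible group deserves a word --- it is most cleanly obtained by lifting the Breuil--Kisin module of $H$ to $W(k)[[u]]$ componentwise, rather than by invoking abelian schemes.
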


\begin{proof}
On se ramène au cas où $H$ est de $p$-torsion. Soit $(\mathfrak{M},\phi)$ le module de Breuil-Kisin de $H$ (voir \cite{Ki}) ; $\mathfrak{M}$ est un $k[[u]]$-module libre de rang $fh$ et $\phi$ est un endomorphisme semi-linéaire tel que $u^e \mathfrak{M}$ soit inclus dans le module engendré par l'image de $\phi$, où $k$ est le corps résiduel de $O_K$ et $e$ son indice de ramification. Le module $\mathfrak{M}$ est muni d'une action de $O_F$, donc se décompose suivant les éléments de $S$ : $\mathfrak{M} = \oplus_{s \in S} \mathfrak{M}_s$. On choisit une bijection entre $S$ et $\mathbb{Z} / f \mathbb{Z}$ de telle sorte que $\phi$ envoie $\mathfrak{M}_i$ dans $\mathfrak{M}_{i+1}$. Les $\mathfrak{M}_i$ sont donc des $k[[u]]$-modules libres de rang $h$. On note $\phi_i : \mathfrak{M}_{i-1} \to \mathfrak{M}_i$. Fixons une base pour les modules $(\mathfrak{M}_i)$, et soit $A_i$ la matrice de $\phi_i$ dans cette base. On a alors
$$ \text{deg}_i H = \frac{1}{e} v_u(\det A_i)$$
où $v_u$ est la valuation $u$-adique. De plus, le module de Breuil-Kisin de $H^D$ est $(\mathfrak{M}^*,\phi^*)$, où $\mathfrak{M}^*$ est le dual de $M$, et où $\phi^*$ peut être décrit comme suit. Le module $\mathfrak{M}^*$ se décompose en $\mathfrak{M}^* = \oplus_{i=0}^{f-1} \mathfrak{M}_i^*$. On muni chaque module $\mathfrak{M}_i^*$ de la base duale de celle des $\mathfrak{M}_i$. Alors la matrice de $\phi_i^* : \mathfrak{M}_{i-1}^* \to \mathfrak{M}_i^*$ dans cette base est $B_i = {u^e} ({}^t\! A_i) ^{-1}$. D'où
$$ \text{deg}_i H^D = \frac{1}{e} v_u(\det B_i) = \frac{1}{e} (eh - v_u(\det A_i)) = h - \text{deg}_i H $$
Une autre démonstration possible aurait été de filtrer le groupe $H$ par des groupes de Raynaud (voir \cite{Ray}), et d'utiliser l'additivité des fonctions degrés (la propriété est évidente pour les groupes de Raynaud car on a une description explicite de ces groupes et de leurs duaux).
\end{proof}

\bibliographystyle{amsalpha}

\end{document}